\newcommand{\BMHC}{}
\newcommand{\EMHC}{}
\newcommand{\BC}{}
\newcommand{\EC}{}
\newcommand{\texorpdfstring}[2]{#1}   % dummy definition of \texorpdfstring
\definecolor{gray}{rgb}{0.2,0.2,0.2}
\theoremstyle{plain}
\newtheorem{Thm}{Theorem}[]
\newtheorem{Lem}[Thm]{Lemma}
\newtheorem{Ass}[Thm]{Assumption}
\DeclareMathOperator*{\sinc}{sinc}
\DeclareMathOperator*{\sech}{sech}
\DeclareMathOperator*{\id}{id}
\def\longrightharpoonup{
\relbar\joinrel\joinrel\relbar\joinrel\joinrel\relbar\joinrel\joinrel\relbar\joinrel\joinrel\relbar\joinrel\joinrel\relbar\joinrel\joinrel\rightharpoonup}
\newcommand{\xrightharpoonup}[1]{\stackrel{#1}{\longrightharpoonup}}
\begin{document}
\title{KdV-like solitary waves in two-dimensional FPU-lattices }
\author{Fanzhi Chen\thanks{{\tt{fanzhi.chen@uni-muenster.de}}, University of M\"unster, Germany } \and Michael Herrmann\thanks{{\tt{michael.herrmann@tu-braunschweig.de}}, Technische Universit\"at Braunschweig, Germany }}
\date{\today}
\maketitle
\begin{abstract} 
We prove the existence of solitary waves in the KdV limit of two-dimensional FPU-type lattices using asymptotic analysis of nonlinear and singularly perturbed integral equations. In particular, we generalize the existing results by Friesecke and Matthies since we allow for arbitrary propagation directions and non-unidirectional wave profiles.
\end{abstract}
\quad\newline%
 \begin{minipage}[t]{0.15\textwidth}%
Keywords: 
 \end{minipage}%
 \begin{minipage}[t]{0.8\textwidth}%
\emph{two-dimensional FPU-lattices}, \emph{KdV limit of lattice waves},\\
\emph{asymptotic analysis of singularly perturbed integral equations} 
 \end{minipage}%
 \medskip
 \newline\noindent
 \begin{minipage}[t]{0.15\textwidth}%
   MSC (2010): %
 \end{minipage}%
 \begin{minipage}[t]{0.8\textwidth}%
37K60,  %	Lattice dynamics, in 37Kxx: Infinite-dimensional Hamiltonian systems%
37K40,  %   Soliton theory, asymptotic behavior of solutions
74H10  	% Analytic approximation of solutions, in 74Hxx: Dynamical problems for solids
\end{minipage}%
{\scriptsize \tableofcontents}
\section{Introduction}
\label{sect:intro}
Since the pioneering work \cite{FPU55} \BMHC a lot of mathematical research has \EMHC  been devoted to one-dimensional FPU lattices, of which the only nontrivial completely integrable type is the so-called Toda lattice, that admits explicit traveling wave solutions in terms of Jacobian elliptic functions, see for instance \cite{FM15}.  Among these works \cite{ZK65} was the first to reveal a connection between solitary waves with long-wave-lengths and small amplitudes and the Korteweg-de Vries (KdV) equation. By diminishing the lattice spacing the authors were able to pass to a continuum described by a KdV equation, which is a completely integrable PDE. However, since their analysis was purely formal, it didn't lead to a rigorous proof. The existence of KdV-like traveling waves in FPU chains was justified in a rigorous way much later, in the first of a series of papers \cite{FP99}, \cite{FP02}, \cite{FP04a} and \cite{FP04b}, of which the other three deal with the stability theory. The proof was based on the introduction of a new framework for passing to the continuum and concerned only lattices with \BMHC interactions \EMHC between nearest \BMHC neighbors, see also \cite{FM15} for related results concerning periodic waves. The \EMHC existence result from \cite{FP99} has been extended in \EMHC \cite{HML15} to 1D lattices with nonlocal interactions, \BC replacing \EC the original analysis \BMHC of Fourier poles \EMHC in the complex plane by direct estimates on the real line. \BMHC The KdV approximation can also be used to derive modulation equations for the large-scale dynamics of well-chosen initial data. We refer to \cite{SW00,HW08,HW09,HW13} for results in the classical FPU setting, and to \cite{GMWZ14} for more complex atomic systems.\EMHC

Existence results on various types of waves in FPU chains can be found in the literature. \BMHC The paper \EMHC \cite{Ioo00} provides a precise description of all subsonic and supersonic waves with small amplitudes via the center manifold reduction. In \cite{FW94} the existence of solitary waves with large amplitudes was proved for the first time by applying the concentration-compactness principle, see also \cite{Che13} for similar results under slightly different conditions and \BMHC \cite{FV99,Her10} for another variational approach yielding unimodal waves. \EMHC In all these large-amplitude results the wave speed is in principle unknown. However, using  \BMHC mountain pass techniques as in \cite{SW97,Pan05} one also finds periodic and solitary waves with prescribed wave speed above a critical threshold. Finally,
for certain potentials there even exist several types of heteroclinic waves, see \BMHC \cite{HR10,TV05,SZ12}.\EMHC

The investigations mentioned above concern exclusively 1D FPU lattices. In the practice of physics, two or more dimensional FPU-lattices are more relevant, since they provide simplified models for crystals \BMHC and solids. \EMHC However, to the best of our knowledge, no existence result for KdV-like waves in such FPU lattices has been obtained except for a degenerate case \cite{FM03}, where it is assumed that the wave is unidirectional and \BMHC longitudinal as it \EMHC moves along the horizontal direction. This assumption reduces the original two-dimensional system to a one-dimensional one and the geometric nonlinearity, \BMHC which arises from the linear stress-strain relation \EMHC due to the introduction of diagonal springs, \BMHC enables \EMHC one to apply the results from \cite{FP99}.

In the present paper we show an existence result for 2D FPU lattices in a general framework, which covers different lattice geometries and allows for arbitrary \BMHC propagation \EMHC directions. Our asymptotic approach adapts some arguments developed in \cite{HML15} but is more sophisticated due to the two-dimensional setting. The discussion of \BMHC the 2D case \EMHC in the present paper hints at similar results for 3D \BMHC lattices, \EMHC which have to be left for further investigations. Besides, the study of stability of the resulting solutions requires ideas lying beyond the scope of the present paper and \BMHC will be \EMHC completely omitted. 
\subsection{Setting of the problem}\label{sect:Overview}
\BMHC In this subsection we study the square lattice --- see the left panel of Figure \ref{Fig:AllLattices} --- as  prototypical example and show that any lattice wave complies with the nonlocal advance-delay-differential equation formulated in \eqref{ddequ}. Other lattice geometries produce basically the same equation but the number of coupling terms $M$, 
the coupling parameters $k_m$, and the effective nonlinearities $F^m_i$ will depend on the mechanical details in the lattice, see \S\ref{sect:app}.\EMHC 
\par
\begin{figure}[ht]
\centering{%
\includegraphics[width=0.95\textwidth]{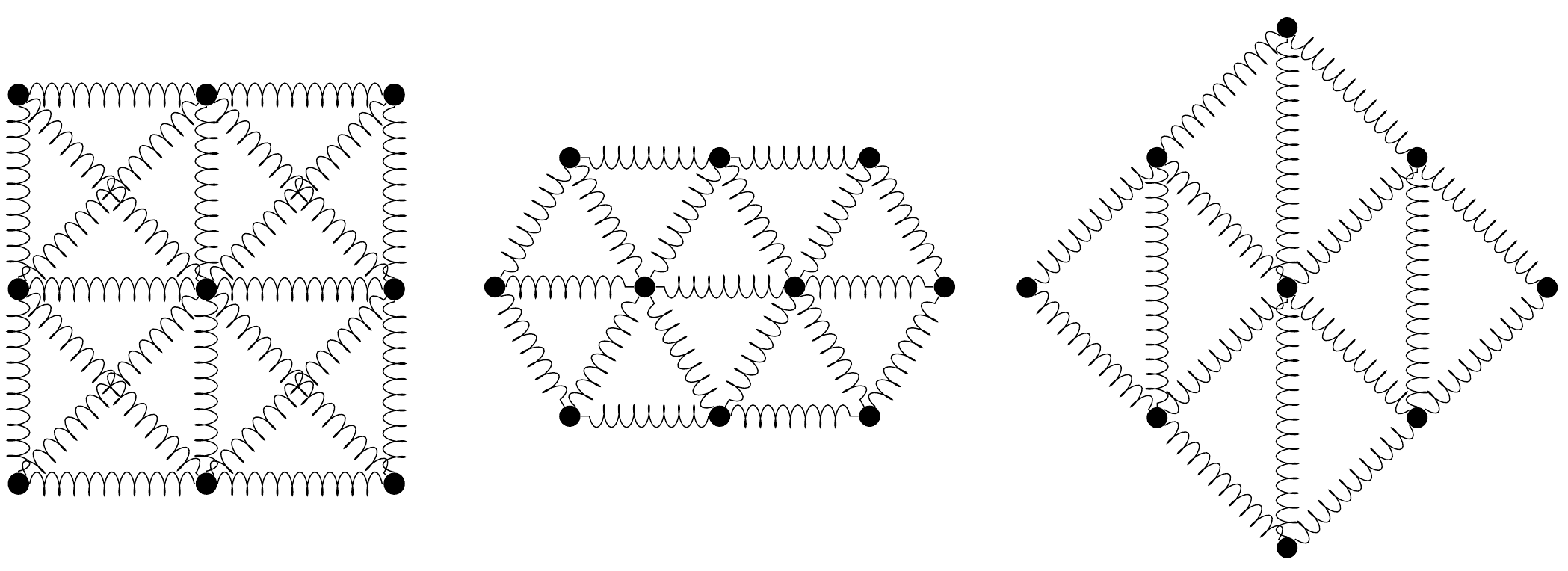}
}%
\caption{\textit{Left panel}: \BMHC Cartoon of the square lattice. The vertical and the horizontal springs are described by the potential $V_1$, while all diagonal springs correspond to $V_2$. \textit{Center panel}: Triangle lattice with identical springs and single potential function $V$. \textit{Right panel}: \BMHC Cartoon of the diamond lattice, which can be regarded as a square lattice without horizontal springs. The lattices have different symmetry groups and produce different coupling terms in the advance-delay-differential equation for lattice waves, see \eqref{ddequ}. \EMHC }
\label{Fig:AllLattices}
\end{figure}
We assume that the lattice is indexed by $(i,j)\in \mathbb{Z}^2$ and the position of the $(i,j)^{\text{th}}$ particle is given by 
\begin{align*}
\begin{pmatrix}
r_*i\\
r_*j
\end{pmatrix}+q_{ij}(t),
\end{align*}
where $r_*>0$ is a reference lattice parameter and $q_{ij}(t)=(q_{ij,1}(t),q_{ij,2}(t))^T\in \mathbb{R}^2$ represents the displacement of the  $(i,j)^{\text{th}}$ particle at time $t$. By summing up the forces exerted on a given particle by its eight neighbors and applying the second Newton's law we obtain
\begin{align}\label{motion}
\begin{split}
\ddot{q}_{ij}=&+\BMHC(\nabla\phi_1)(q_{i+1j}-q_{ij})-(\nabla\phi_1)(q_{ij}-q_{i-1j})
\\&+(\nabla\phi_2)(q_{ij+1}-q_{ij})-(\nabla\phi_2)(q_{ij}-q_{ij-1})\\
&+(\nabla\phi_3)(q_{i+1j+1}-q_{ij})-(\nabla\phi_3)(q_{ij}-q_{i-1j-1})
\\&+(\nabla\phi_4)(q_{i+1j-1}-q_{ij})-(\nabla\phi_4)(q_{ij}-q_{i-1j+1}),\EMHC
\end{split}
\end{align} 
where
\begin{align*}
\phi_1(x_1,x_2)& =V_1\big(\sqrt{(x_1+r_*)^2+x_2^2}\,\big), \ \ \ \ \phi_2(x_1,x_2)=V_1\big(\sqrt{x_1^2+(x_2+r_*)^2}\,\big), \\
\phi_3(x_1,x_2)& =V_2\big(\sqrt{(x_1+r_*)^2+(x_2+r_*)^2}\,\big),\ \phi_4(x_1,x_2)=V_2\big(\sqrt{(x_1+r_*)^2+(x_2-r_*)^2}\,\big)
\end{align*}
are the four effective potentials corresponding to the horizontal, vertical, and diagonal springs, respectively. In this paper we are \BMHC solely \EMHC interested in traveling wave solutions of the following type
\begin{align}\label{travel}
q_{ij}(t)=\epsilon \BMHC Q_{\epsilon}\EMHC\Big(\epsilon\big(\kappa_1i+\kappa_2j-c_{\epsilon}t\big)\Big) \in \mathbb{R}^2,
\end{align}
 where $\kappa:=(\kappa_1,\kappa_2)^T$ is the wave vector prescribing the propagating direction of the wave and $c_{\epsilon}>0$ the wave speed that depends on the \BMHC scaling \EMHC parameter $\epsilon>0$. By a suitable scaling we can assume that $\kappa$ is normalized with respect to its length, \BMHC and denoting the angle between $\kappa$ and the positive axis by $\alpha$, we write \EMHC $\kappa=(\cos(\alpha),\sin(\alpha))^T$.
\par
Substituting \BMHC the \EMHC ansatz \eqref{travel} into \eqref{motion}, we arrive at a system of two difference-differential equations for the \BMHC unknown function $Q_\epsilon$, which can be written in abstract form as \EMHC 
\begin{align}\label{ddequ}
\epsilon^3 c_{\epsilon}^2 \BMHC Q_{\epsilon}^{\prime\prime}\EMHC=\sum_{m=1}^Mk_m\epsilon\delta_{-k_m\epsilon} F^m\big(k_m\epsilon^2\delta_{+k_m\epsilon}\BMHC Q_{\epsilon}\EMHC\big).
\end{align}
\BMHC Here $M=4$ denotes the number of interacting neighbors (divided by two),  the coupling constants depend on $\alpha$ via
\begin{align*}
k_1=\kappa_1,\qquad k_2=\kappa_2,\qquad k_3=\kappa_1+\kappa_2,\qquad  k_4=\kappa_1-\kappa_2,
\end{align*}
and 
\begin{align*}
F^m(x_1,x_2)=\nabla \phi_m(x_1,x_2)-\nabla\phi_m(0,0)
\end{align*}
abbreviates the (normalized) gradient of the effective potential related to $k_m$. Moreover,
\EMHC the right hand side of \eqref{ddequ} is formulated in terms of the difference operators
 \begin{align*}
\delta_{+\eta}Y:=\frac{Y(\cdot+\eta)-Y(\cdot)}{\eta},\qquad \delta_{-\eta}Y:=\frac{Y(\cdot)-Y(\cdot-\eta)}{\eta},\,\qquad \eta>0,
\end{align*}
\BMHC which reflect that the problem involves for each \BC $m=1,...,M$ \EC precisely one advance and one delay term and allow us  to reformulate \eqref{ddequ} in \S\ref{sect:abstract} as an integral equation for the velocity profile \EMHC 
\begin{align}
\label{Eqn:DefWEps}
W_{\epsilon}=Q^\prime_{\epsilon}\,.
\end{align}
\par
\BMHC Recall that the special case $\alpha=0$ has been studied in \cite{FM03}. In particular, it has been shown that any wave propagating along the horizontal lattice direction must be unidirectional and longitudinal. This means that any reasonable solution to \eqref{ddequ} must satisfy $W_{\epsilon,2}=0$ and the original 2D problem can hence be reduced to a one-dimensional one. Moreover, the identities $k_2=0$ and $k_1=k_3=k_4$ imply that the different coupling terms can be subsumed into a single one. For general $\alpha$, however, both simplifications are no longer possible and the \BC lattice waves \EC are asymptotically unidirectional in the sense that $W_{\epsilon,2}(\xi)/W_{\epsilon,1}(\xi)=\mathrm{const}$ manifests only in the limit $\epsilon\to0$, see Figure~\ref{Fig:NumSim}.
\par
\BMHC It has also been observed in \cite{FM03} \EMHC that the diagonal springs lead to the nonlinearity that is necessary for the existence of solitary waves. \BMHC This holds even in the case of linear spring forces, i.e., if the physical interactions potentials $V_i$ are quadratic (geometric nonlinearity). \EMHC Without diagonal springs there would be no resistance against the sheer forces and the mechanical structure would become unstable. 
\begin{figure}[ht!]
\centering{
   \includegraphics[width=0.4\textwidth]{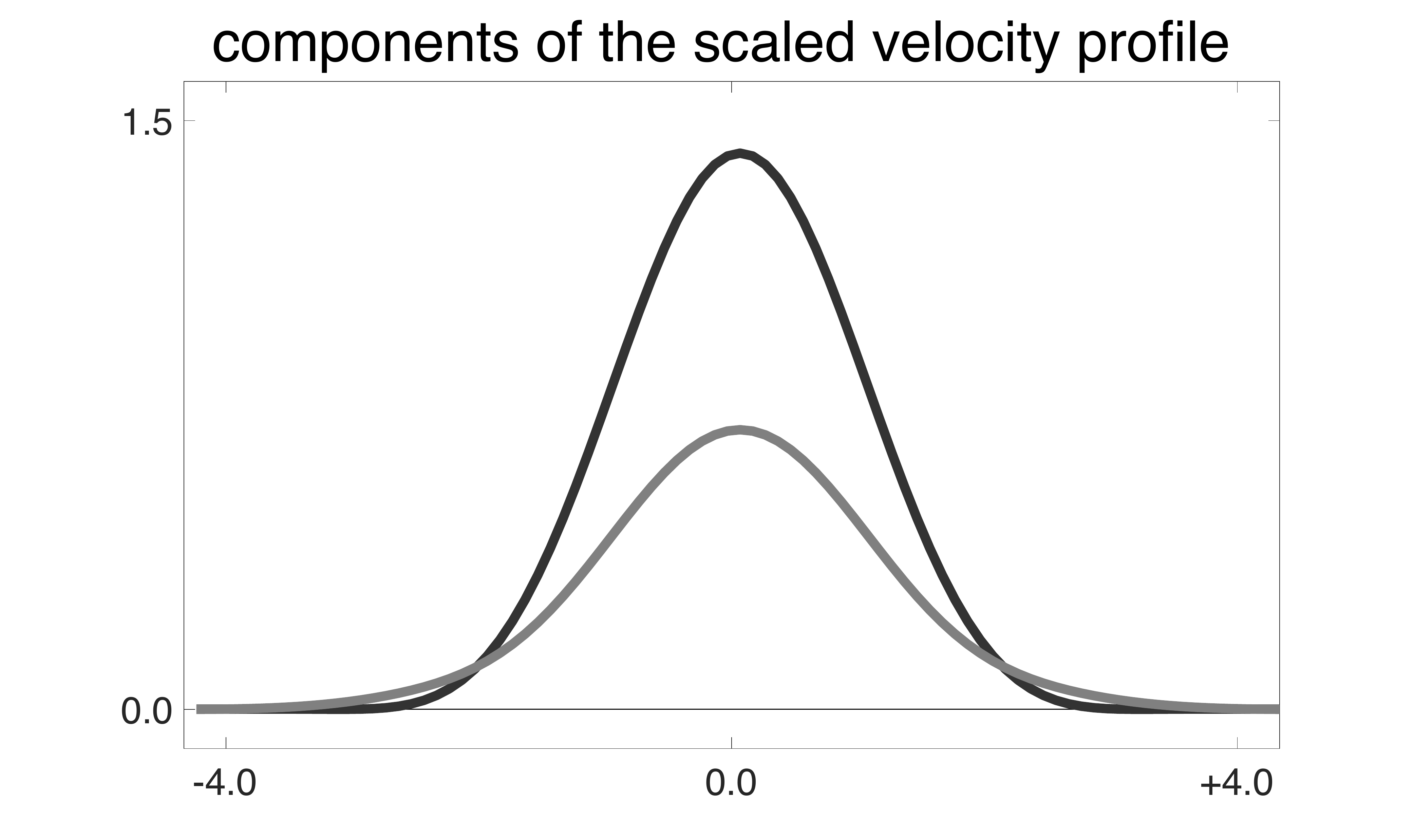}
   \includegraphics[width=0.4\textwidth]{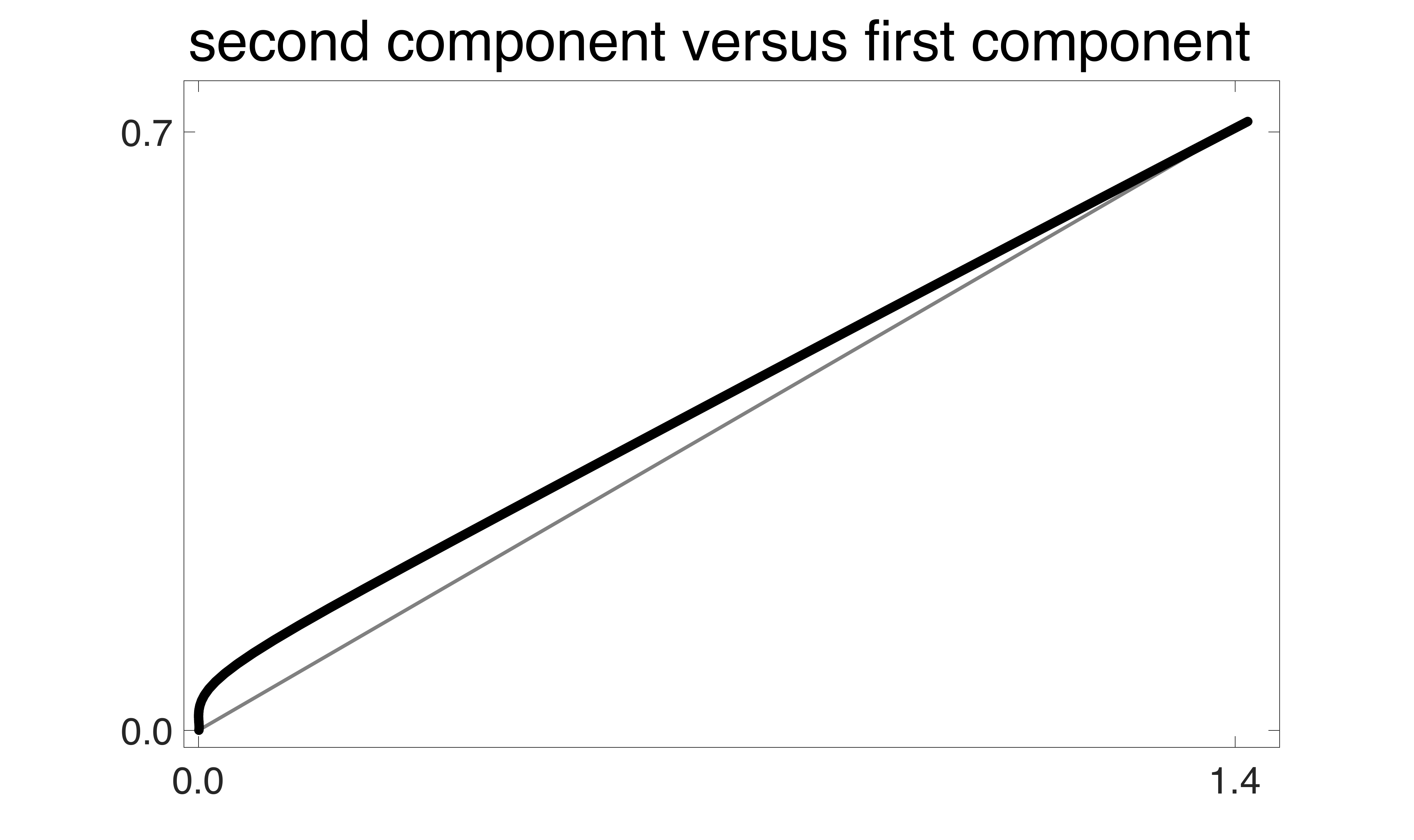}
} %
\caption{\textit{Left panel:} Numerical approximations of $W_{\epsilon,1}(\xi)$ (black) and $W_{\epsilon,2}(\xi)$ (gray) for the square lattice with angle $\alpha=\frac{\pi}{8}$ and
positive $\epsilon$. \textit{Right panel:} The plot $W_{\epsilon,2} $ versus $W_{\epsilon,1}$ reveals that the two components of $W_{\epsilon}$ are not proportional, which means that $W_{\epsilon}$ is not unidirectional and our problem cannot be reduced to a one-dimensional one as in \cite{FM03}.}\label{Fig:NumSim}
\end{figure}
\subsection{Proof strategy and main result}

\BMHC In this paper we follow the fundamental ideas behind KdV limits and construct solutions to \eqref{ddequ} in the asymptotic regime of small $\epsilon>0$, i.e., we suppose that the lattice waves have small amplitudes and large wave-lengths. This is illustrated in Figure \ref{Fig:ScaledWaves} and enables us to employ Taylor expansion and to regard the difference terms as approximate differential operators. To this end we assume that the nonlinearities \EMHC $F_i^m$, which take over the role of the partial derivatives of effective potentials in the lattices, have the form
\begin{equation}\label{Ide1}
F_i^m(x_1,x_2)=\alpha_{i,1}^mx_1+\alpha_{i,2}^mx_2+\tfrac{1}{2}\big(\beta_{i,11}^mx_1^2+2\beta_{i,12}^mx_1x_2+\beta_{i,22}^mx_2^2\big)+\Psi_i^m(x_1,x_2)
\end{equation}
for all $i=1,2$ and $m=1,...,M$, where $\Psi_i^m(x_1,x_2)$ represent the higher order terms
\BMHC and are hence asymptotically small perturbations of the linear and the quadratic contributions. \EMHC
\par

\begin{figure}[ht!]
\centering{ %
   \includegraphics[width=0.85\textwidth]{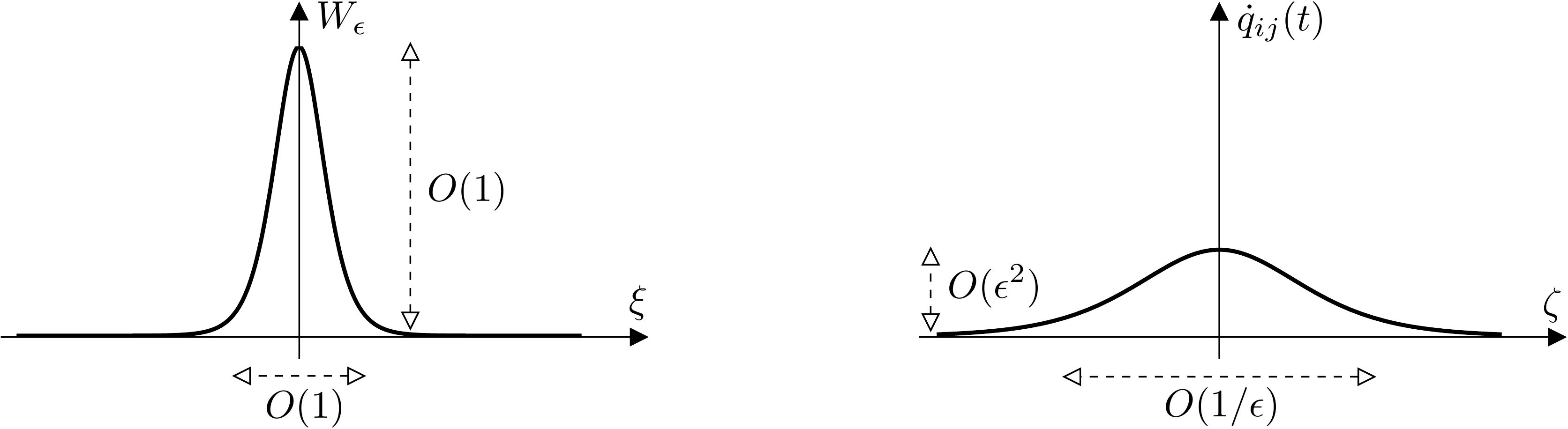} %
} %
\caption{\textit{Left panel}. \BMHC Scaled velocity profile $W_{\epsilon}$ as function of $\xi$. \textit{Right panel}. Cartoon of the atomistic velocities in the corresponding KdV wave, where $\zeta=\kappa_1i+\kappa_2j-c_{\epsilon}t$ denotes the phase with respect to the original variables. The unscaled profile is obtained from the scaled one by stretching the argument by ${1}/{\epsilon}$ and pressing the amplitude by $\epsilon^2$. \EMHC
}\label{Fig:ScaledWaves}
\end{figure}
Following \BMHC \cite{HML15} we \EMHC consider the \BMHC vector-valued velocity profile $W_{\epsilon}$ from \eqref{Eqn:DefWEps} \EMHC and decompose it as the sum of a limit profile $W_0$ and an $O(\epsilon^2)$-corrector \BMHC via\EMHC
\begin{equation}\label{decompos}
W_{\epsilon}:=W_0+\epsilon^2V_{\epsilon}\,,
\end{equation}
\BC 
where the small parameter $\epsilon>0$ quantifies via
\begin{equation}\label{speed}
\sigma_{\epsilon}:=c_{\epsilon}^2=\sigma_0+\epsilon^2
\end{equation}
how much the squared wave speed exceeds a given reference value $\sigma_0$, which \EC \BMHC is completely determined by the linear coefficients $\alpha_{i,j}^m$ \BMHC from \eqref{Ide1}, \EMHC see formula \eqref{defsig}. Applying \eqref{speed} and \BMHC gathering \EMHC linear terms with respect to $W_{\epsilon}$ in \eqref{equep1} on the left hand side and the \BMHC nonlinear \EMHC terms on the right, we arrive at the equation
  \begin{align}\label{BW}
\BMHC \mathcal{B}_{\epsilon}W_{\epsilon}\EMHC =\mathcal{Q}_{\epsilon}[W_{\epsilon}]+\epsilon^2\mathcal{P}_{\epsilon}[W_{\epsilon}].
\end{align}
Here, $\mathcal{B}_{\epsilon}$ is a linear integral operator introduced in \eqref{deflam}, $\mathcal{Q}_{\epsilon}$ depends quadratically on $W_{\epsilon}$, and $\mathcal{P}_{\epsilon}$ stems from the higher order terms $\Psi_i^m$, \BMHC see \eqref{Eqn:DefQeps} and \eqref{Eqn:DefPeps}. \EMHC The key observation \BMHC for small $\epsilon>0$ \EMHC is that the formal limit equation 
\begin{align}\label{BWLim}
\BMHC\mathcal{B}_{0}W_{0}\EMHC =\mathcal{Q}_{0}[W_{0}]\,,
\end{align}
\BMHC which will be identified in \eqref{Eqn:LeadingOrder1} and \eqref{Eqn:LeadingOrder2}, \EMHC is equivalent to
 \begin{align}
 \label{Ws1}
W_{0,1}=W_*,\qquad W_{0,2}=\lambda W_*,
\end{align}
\BMHC where the non-trivial and decaying function $W_*$ is the unique even and homoclinic solution to the ODE\EMHC
 \begin{align}\label{Ws}
W''_*=d_1W_*-d_2W_*^2.
\end{align}
\BMHC The constants $d_1$ and $d_2$  are positive and can be computed explicitly, see \BMHC \eqref{deflam} and \eqref{defd}. They depend on \EMHC the linear and quadratic coefficients $\alpha_{i,j}^m$ and $\beta_{i,jk}^m$, \BMHC as well as on the scalar quotient \EMHC $\lambda=W_{0,2}/W_{0,1}$. Moreover, \EMHC  $W_*$ defines via $w(t,\xi):=W_*(\xi-t)$ the solitary wave for the KdV equation
\begin{align*}
d_1\partial_tw+d_2\partial_{\xi}w^2+\partial_{\xi}^3w=0.
\end{align*} 
At the next level, we \BMHC turn \EMHC \eqref{Ws} into a fixed-point equation with respect to the corrector $V_{\epsilon}$ by inserting \eqref{decompos} and rearranging the terms in such a way, that the linear terms with respect to $V_{\epsilon}$ stand on the left hand side, which yield automatically a linear operator $\mathcal{L}_{\epsilon}$. After the uniform inversion of $\mathcal{L}_{\epsilon}$ on $(\mathsf{L}_{\rm{even}}^2(\mathbb{R}))^2$, see Theorem \ref{inverL}, \BMHC we finally deduce the fixed point equation
\EMHC 
 \begin{align}
\label{Eqn:FixedPoint}
V_{\epsilon}=\mathcal{F}_{\epsilon}[V_{\epsilon}]\,,
\end{align}
where \BMHC the operator $\mathcal{F}_\epsilon$ \EMHC will be identified in \eqref{endeq}.
The contraction property of $\mathcal{F}_{\epsilon}$ will \BMHC  be shown \EMHC in a sufficiently large ball in $(\mathsf{L}_{\rm{even}}^2(\mathbb{R}))^2$, \BMHC and the \EMHC Banach fixed-point theorem provides us with a solution, which is at the same time unique in the chosen ball. \BMHC Notice that \BC uniqueness of large-amplitude traveling waves \EC \BMHC is a notoriously difficult and open problem in the mathematical theory of Hamiltonian lattice waves.  \EMHC
\par
\BMHC The sketched asymptotic approach shares many similarities with the discussion in \cite{HML15}, but all arguments have to be adapted to the two-dimensional situation. For instance, our version of $\mathcal{B}_{\epsilon}$ is not symmetric and its inversion involves the study of a matrix of operators instead of a single operator on $\mathsf{L}^2(\mathbb{R})$. Moreover, the fundamental operators and scalar quantities depend on the propagation angle $\alpha$. \EMHC
\par
\BMHC Our main result guarantees that the abstract advance-delay-differential equation \eqref{ddequ} admits  KdV-like solutions for sufficiently small $\epsilon>0$ provided that 
the nonlinearities $F^m_i$ and the parameters $k_m$ satisfies certain requirements. These conditions, which can be checked as described in \S\ref{sect:app}, encode the particular lattice geometry, the propagation direction of the wave, and the choice of the physical potentials. \EMHC 
\begin{Thm}\label{maintm}
Under Assumptions \BMHC \ref{Ass sym}, \ref{Ass2},  \ref{Ass1}, and \ref{Ass inverse} there exists a sound speed $\sigma_0$ along with a KdV traveling wave $W_0$ as in \eqref{Ws1} and \eqref{Ws} \EMHC such that for any sufficiently small $\epsilon>0$ and wave speed $c_{\epsilon}=\sqrt{\sigma_0+\epsilon^2}$, equation (\ref{ddequ}) has a solution whose  velocity profile $W_{\epsilon}=Q_{\epsilon}^\prime\in (\mathsf{L}_{\rm{even}}^2(\mathbb{R}))^2$ satisfies $||W_{\epsilon}-W_0||_2\leq C\epsilon^2$.
\end{Thm}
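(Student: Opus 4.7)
The plan is to cast the existence problem as a fixed-point equation for the corrector $V_{\epsilon}$ in $(\mathsf{L}^2_{\rm even}(\mathbb{R}))^2$ and to solve it by the Banach contraction principle, following the road map sketched after \eqref{BW}. Concretely, I would first insert the ansatz \eqref{decompos} together with the scaling \eqref{speed} into the master equation \eqref{BW}, use the leading-order identity $\mathcal{B}_0W_0=\mathcal{Q}_0[W_0]$ from \eqref{BWLim} (which, by \eqref{Ws1} and \eqref{Ws}, has already been solved in closed form in terms of the KdV profile $W_*$), expand the bilinear form via $\mathcal{Q}_{\epsilon}[W_0+\epsilon^2 V_{\epsilon}]=\mathcal{Q}_{\epsilon}[W_0]+2\epsilon^2 \mathcal{Q}_{\epsilon}[W_0,V_{\epsilon}]+\epsilon^4 \mathcal{Q}_{\epsilon}[V_{\epsilon}]$, and collect the terms that are linear in $V_{\epsilon}$ on the left. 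Dividing by $\epsilon^2$ produces an equation of the shape $\mathcal{L}_{\epsilon}V_{\epsilon}=\mathcal{R}_{\epsilon}[V_{\epsilon}]$, where $\mathcal{L}_{\epsilon}:=\mathcal{B}_{\epsilon}-2\mathcal{Q}_{\epsilon}[W_0,\cdot]$ is the natural linearization of \eqref{BW} at the KdV profile, and $\mathcal{R}_{\epsilon}$ gathers (a) the consistency defect $\epsilon^{-2}\bigl(\mathcal{Q}_{\epsilon}[W_0]-\mathcal{B}_{\epsilon}W_0\bigr)$, (b) the genuinely quadratic term $\epsilon^2\mathcal{Q}_{\epsilon}[V_{\epsilon}]$, and (c) the higher-order perturbation $\mathcal{P}_{\epsilon}[W_0+\epsilon^2 V_{\epsilon}]$.

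Next I would invoke Theorem \ref{inverL} to obtain a uniform bound $\|\mathcal{L}_{\epsilon}^{-1}\|\le C$ on the even subspace. Working on even functions is essential here: the translation mode $W_0'$ lies in the kernel of the limit linearization $\mathcal{L}_0$ but is odd and therefore absent from $(\mathsf{L}^2_{\rm even}(\mathbb{R}))^2$. Applying $\mathcal{L}_{\epsilon}^{-1}$ to the rearranged equation yields exactly the fixed-point problem \eqref{Eqn:FixedPoint} with $\mathcal{F}_{\epsilon}:=\mathcal{L}_{\epsilon}^{-1}\mathcal{R}_{\epsilon}$.

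The remaining analytic task is to show that for some fixed radius $R>0$ and all sufficiently small $\epsilon$, the map $\mathcal{F}_{\epsilon}$ sends the closed ball $B_R\subset(\mathsf{L}^2_{\rm even}(\mathbb{R}))^2$ into itself and contracts on it. The self-map property reduces to controlling $\|\mathcal{F}_{\epsilon}[0]\|_2$, that is, the consistency error of $W_0$ in \eqref{BW}. This error is $O(\epsilon^2)$ because the Taylor expansions $\delta_{\pm k_m\epsilon}Y=Y'\pm\tfrac12 k_m\epsilon Y''+\dots$ applied to the Schwartz profile $W_0$ leave $O(\epsilon^2)$-remainders, and $\mathcal{P}_{\epsilon}[W_0]$ is uniformly bounded thanks to the higher-order vanishing of the $\Psi_i^m$ at the origin; after dividing by $\epsilon^2$ and using the uniform boundedness of $\mathcal{L}_{\epsilon}^{-1}$ one still obtains an $O(1)$ bound on $\mathcal{F}_{\epsilon}[0]$. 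For the contraction constant I would differentiate $\mathcal{R}_{\epsilon}$ at $V\in B_R$: the quadratic contribution gives a derivative of norm $O(\epsilon^2 R)$, and the $\mathcal{P}$-contribution gives $O(\epsilon^2)$ via chain rule and the prefactor $\epsilon^2$ hidden in $W_0+\epsilon^2 V$. Composing with $\mathcal{L}_{\epsilon}^{-1}$ yields a Lipschitz constant of order $O(\epsilon^2(1+R))$, which is strictly below one for $\epsilon$ small. Banach's theorem then delivers a unique $V_{\epsilon}\in B_R$, and the ansatz \eqref{decompos} gives $\|W_{\epsilon}-W_0\|_2=\epsilon^2\|V_{\epsilon}\|_2\le C\epsilon^2$ as well as a genuine solution of \eqref{ddequ}.

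The main obstacle I anticipate is not this contraction argument itself but the two-dimensional structural input that feeds into it, particularly the uniform invertibility statement Theorem \ref{inverL} and the derivation of \eqref{Ws1}--\eqref{Ws} from \eqref{BWLim}. Unlike the scalar setting of \cite{HML15}, $\mathcal{B}_{\epsilon}$ is a non-symmetric matrix of integral operators coupling the two components of $W_{\epsilon}$ through the direction parameters $k_m$ and the coefficients $\alpha_{i,j}^m$, and $\mathcal{Q}_{\epsilon}$ mixes those components via the tensors $\beta_{i,jk}^m$. Verifying that the limit system \eqref{BWLim} genuinely reduces under the proportionality ansatz \eqref{Ws1} to the scalar KdV ODE \eqref{Ws}, with $\sigma_0$, $\lambda$, $d_1$, $d_2$ identifiable in closed form from the lattice data, and that $\mathcal{L}_0$ has exactly the one-dimensional kernel $\mathbb{R}W_0'$ that the evenness constraint removes, is where the truly delicate linear-algebraic and spectral work of the paper must be concentrated.
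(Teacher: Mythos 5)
Your proposal follows essentially the same route as the paper: reformulation as the fixed-point equation \eqref{Eqn:FixedPoint} for the corrector $V_\epsilon$, uniform inversion of the linearization $\mathcal{L}_\epsilon=\mathcal{B}_\epsilon-\mathcal{M}_\epsilon$ on the even subspace via Theorem \ref{inverL}, and a Banach contraction argument exactly as in \S\ref{sect:FP}. The only quantitative discrepancy is that the paper's Lipschitz constant for the quadratic part is $O(\epsilon^{3/2})$ rather than $O(\epsilon^{2})$, because the bound $\|\mathcal{A}_{k_m\epsilon}V\|_\infty\le (k_m\epsilon)^{-1/2}\|V\|_2$ costs a factor $\epsilon^{-1/2}$; this does not affect the conclusion.
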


This paper is organized as follows: \BMHC Within the sections \S\ref{sect:reform1}--\S\ref{sect:reform4} we reformulate  \eqref{ddequ} first as a nonlinear eigenvalue equation for $W_\epsilon$, and afterwards as fixed problem for the corrector $V_{\epsilon}$, see \eqref{Eqn:FixedPoint}. We also study the leading order equation 
\eqref{BWLim} and specify our assumptions concerning the Taylor coefficients from \eqref{Ide1} and their dependence on the propagation direction. \BMHC Using the auxiliary results from \S\ref{sect:invB} and \S\ref{sect:Aux2} we prove in \S\ref{sect:invL} the key asymptotic result, i.e., we show that the operator $\mathcal{L}_{\epsilon}$, which stems from the linearization of (\ref{BW}) around  $W_0$,  is uniformly invertible  in the $\mathsf{L}^2$-space of even functions. The contraction property of the fixed point operator $\mathcal{F}_{\epsilon}$ is then verified in \S\ref{sect:FP} and concludes the proof of our main result.  Finally, in \S\ref{sect:app} we discuss the three different lattices from Figure \ref{Fig:AllLattices}  as applications of Theorem \ref{maintm}. \EMHC 
\section{Abstract existence result}\label{sect:abstract}

\BMHC In this section we write (\ref{ddequ}) as a fixed point equation with respect to the corrector $V_{\epsilon}$ in \eqref{decompos} and prove under certain assumptions on the nonlinearities the existence of KdV-type solutions. To this end we denote by $C$ any generic constant that is independent of $\epsilon$ but can depend on $M$, the constants $k_m$, and the properties of the functions $F_{i}^m$. Moreover, we write $||\cdot||_p$ and $||\cdot||_{l,p}$ for the usual norms in the Lebesgue and Sobolev spaces $\mathsf{L}^p(\mathbb{R})$ and $\mathsf{W}^{l,p}(\mathbb{R})$, respectively, and $\langle \cdot,\cdot\rangle$ means the usual dual paring of functions. The same symbols also abbreviate the corresponding norms of vector-valued functions.
\EMHC
\subsection{Reformulation as integral equation}\label{sect:reform1}
\BMHC For \EMHC an arbitrary constant $\eta>0$, we define the integral operator $\mathcal{A}_{\eta}$  by 
\begin{equation}
\label{Def:OperatorAeta}
(\mathcal{A}_{\eta} Y)(\xi):=\frac{1}{\eta}\int_{\xi-\frac{\eta}{2}}^{\xi+\frac{\eta}{2}}Y(\tilde{\xi})d\tilde{\xi}\qquad\text{with}\quad Y\in \mathsf{L}^2(\mathbb{R}).
\end{equation}
 \BMHC This operator corresponds in Fourier space to the multiplication with the symbol function 
\begin{align}
\label{Eqn:SymbolA}
z\;\mapsto\;\sinc(\eta z/2)=\frac{\sin(\BC\eta z/2\EC)}{\eta z\BC/2\EC},
\end{align} and can \EMHC be formally expanded as follows 
\begin{equation}\label{formal}
\mathcal{A}_{\eta}=\id+\frac{\eta^2}{24}\partial^2+O(\eta^4).
\end{equation}
\BMHC In particular, \EMHC $\mathcal{A}_{\eta}$ is a singular perturbation of the identity due to the remainder terms with higher derivatives and this complicates the asymptotic analysis. Fortunately, the derivation of the decisive properties of the auxiliary operator $\mathcal{B}_{\epsilon}$, see (\ref{deflam}), \BMHC only \EMHC involves the inversion of the \BMHC Fourier \EMHC symbol function 
\begin{equation*}
1+\sum_{m=1}^M \gamma_m\frac{1-\sinc(k_m z/2)^2}{\epsilon^2},
\end{equation*}
 which corresponds to 
\begin{equation*}
\Big(\id+\sum_{m=1}^M \gamma_m\frac{\id-\mathcal{A}_{k_m}^2}{\epsilon^2}\Big)^{-1}.
\end{equation*}
The asymptotic properties of such an operator are well understood, \BMHC see for instance \cite{HML15} and the discussion in \S\ref{sect:invB}. \EMHC
\par
As already mentioned in \S\ref{sect:intro}, the operator $\mathcal{A}_{\epsilon}$ allows our problem to be reformulated as one concerning the velocity profile $W_{\epsilon}$ and this leads to a new framework for solving (\ref{ddequ}).
\begin{Lem}[\BMHC reformulation of traveling wave equation\EMHC]
\BMHC  Suppose $W_{\epsilon}$ from \eqref{Eqn:DefWEps} belongs to \EMHC $(\mathsf{L}^2(\mathbb{R}))^2$. Then the traveling wave equation (\ref{ddequ}) is equivalent to the nonlinear eigenvalue problem
 \begin{align}\label{equep1}
\epsilon^2 \sigma_{\epsilon}W_{\epsilon}&=\sum_{m=1}^{M}k_m\mathcal{A}_{k_m\epsilon}F^m(\epsilon^2 k_m\mathcal{A}_{k_m\epsilon}W_{\epsilon,1},\epsilon^2 k_m\mathcal{A}_{k_m\epsilon}W_{\epsilon,2})
\end{align} 
with \BMHC scaled eigenvalue \EMHC \BC $\sigma_\epsilon=c_\epsilon^2$  as in \eqref{speed} \EC \BMHC and unknown eigenfunction $W_{\epsilon}$.\EMHC
\end{Lem}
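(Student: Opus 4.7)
The plan is to pass from the position equation \eqref{ddequ} to the velocity equation \eqref{equep1} by a single antiderivative, using the fact that the averaging operator $\mathcal{A}_\eta$ acts as an \emph{integrated} version of the difference operators $\delta_{\pm\eta}$.

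First, I would establish the following commutation identity, which is the technical heart of the reformulation: for any $\eta>0$, any $Q\in C^1$ with $Q'=W$, and any smooth $G:\mathbb{R}^2\to\mathbb{R}$,
\begin{equation*}
\delta_{-\eta}\bigl[G(\delta_{+\eta} Q)\bigr]\;=\;\partial\,\mathcal{A}_\eta\bigl[G(\mathcal{A}_\eta W)\bigr].
\end{equation*}
The proof is two applications of the fundamental theorem of calculus: first, $\delta_{+\eta}Q(\xi)=\tfrac{1}{\eta}\int_{\xi}^{\xi+\eta}W=(\mathcal{A}_\eta W)(\xi+\eta/2)$, so $\delta_{-\eta}[G(\delta_{+\eta}Q)](\xi)$ is the centered difference of $G\circ\mathcal{A}_\eta W$ with step $\eta$; second, the definition \eqref{Def:OperatorAeta} gives $\mathcal{A}_\eta h'(\xi)=\tfrac{1}{\eta}[h(\xi+\eta/2)-h(\xi-\eta/2)]$, so applying this to $h=G\circ\mathcal{A}_\eta W$ yields the right-hand side. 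Commutativity of $\partial$ and $\mathcal{A}_\eta$ (both are translation invariant, equivalently via the symbol \eqref{Eqn:SymbolA}) is used implicitly.

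Next I would insert this identity into \eqref{ddequ} term by term with $\eta=k_m\epsilon$ and $G=F^m$, and observe that the left-hand side $\epsilon^3\sigma_\epsilon Q_\epsilon''=\epsilon^3\sigma_\epsilon\partial W_\epsilon$ is also a total derivative. Thus \eqref{ddequ} is equivalent to
\begin{equation*}
\partial\Bigl[\epsilon^3\sigma_\epsilon W_\epsilon-\sum_{m=1}^M k_m\epsilon\,\mathcal{A}_{k_m\epsilon}F^m\bigl(k_m\epsilon^2\mathcal{A}_{k_m\epsilon}W_\epsilon\bigr)\Bigr]=0,
\end{equation*}
so the bracketed expression is constant in $\xi$. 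Dividing by $\epsilon$ then yields \eqref{equep1} provided this constant is zero.

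The single nontrivial point is precisely that the integration constant vanishes. This is where the hypothesis $W_\epsilon\in(\mathsf{L}^2(\mathbb{R}))^2$ is used: $\mathcal{A}_{k_m\epsilon}$ maps $\mathsf{L}^2$ continuously into the space of bounded continuous functions that vanish at infinity, and since $F^m(0)=0$ by the normalization following \eqref{Ide1}, each term $\mathcal{A}_{k_m\epsilon}F^m(k_m\epsilon^2\mathcal{A}_{k_m\epsilon}W_\epsilon)$ also decays at infinity; together with $W_\epsilon\in\mathsf{L}^2$ this forces the constant to be $0$. The converse direction is even simpler: given a solution $W_\epsilon$ of \eqref{equep1}, one recovers $Q_\epsilon$ as any antiderivative and differentiates both sides, applying the same commutation identity in reverse to arrive back at \eqref{ddequ}. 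I do not anticipate a genuine obstacle — the argument is essentially a careful bookkeeping of the shifts $\pm\eta/2$ produced by $\delta_{\pm\eta}$, which cancel cleanly once the nonlinearity is reexpressed in terms of $\mathcal{A}_{k_m\epsilon}W_\epsilon$.
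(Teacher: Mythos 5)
Your proposal is correct and takes essentially the same route as the paper: both proofs rest on the shift identities $\delta_{+\eta}Q_\epsilon(\xi)=(\mathcal{A}_\eta W_\epsilon)(\xi+\eta/2)$ and $(\mathcal{A}_\eta Y)'(\xi)=(\delta_{-\eta}Y)(\xi+\eta/2)$, so that \eqref{ddequ} becomes an exact derivative of \eqref{equep1}, and the integration constant is eliminated using $W_\epsilon\in(\mathsf{L}^2(\mathbb{R}))^2$ together with $F^m(0,0)=(0,0)$. (The only cosmetic remark: rather than arguing via decay at infinity, it is cleanest to note that the constant equals a difference of two $\mathsf{L}^2$-functions and a nonzero constant is not in $\mathsf{L}^2(\mathbb{R})$.)
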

\begin{proof}
Suppose \eqref{equep1} \BMHC is satisfied and let \BMHC $Q_{\epsilon}(\xi):=\int_{0}^{\xi}W_{\epsilon}(\zeta)d\zeta$. \BMHC Then we can replace any term $(\mathcal{A}_{\eta}W_{\epsilon})(\xi)$ in the argument of the nonlinearities by $(\delta_{+\eta}Q_{\epsilon})(\xi-\eta/2)$ and differentiating both sides of \eqref{equep1} we obtain \eqref{ddequ} due to $(\mathcal{A}_\eta Y)^\prime(\xi)=(\delta_{-\eta}Y)(\xi+\eta/2)$. Conversely, integrating \eqref{ddequ} we obtain immediately \eqref{equep1} plus a constant vector $\in \mathbb{R}^2$, but this constant of integration vanishes due to $W_{\epsilon}\in (\mathsf{L}^2(\mathbb{R}))^2$ and $F^m((0,0))=(0,0)$ for all $m$.\EMHC
\end{proof}
\BMHC In our subsequent analysis we use the following properties of $\mathcal{A}_{\eta}$, where (\ref{rigorexp}) justifies  \BMHC the formal expansion (\ref{formal}) of $\mathcal{A}_{\eta}$ in a rigorous way.\EMHC
\begin{Lem}[\BMHC properties of the integral operator $\mathcal{A}_\eta$\EMHC ]\label{Prop A}
For each $\eta>0$, the integral operator $\mathcal{A}_{\eta}$ has the following properties:
\begin{enumerate}
\item For any $W\in \mathsf{L}^2(\mathbb{R})$ we have $\mathcal{A}_{\eta}\BC W\EC\in \mathsf{L}^2(\mathbb{R})\cap \mathsf{L}^{\infty}(\mathbb{R})$ with 
\begin{align*}
||\mathcal{A}_{\eta}W||_{\infty}\leq \eta^{-1/2}||W||_{2},\qquad ||\mathcal{A}_{\eta}W||_{2}\leq ||W||_{2}.
\end{align*}
Moreover, \BMHC  $\mathcal{A}_{\eta}W$ \EMHC admits a weak derivative with $||(\mathcal{A}_{\eta}W)'||_{2}\leq \eta^{-1/2}||W||_{2}$.
\item For any $W\in \mathsf{L}^{\infty}(\mathbb{R})$, we have $||\mathcal{A}_{\eta}W||_{\infty}\leq ||W||_{\infty}$.
\item $\mathcal{A}_{\eta}$ \BMHC respects the even-odd parity and the non-negativity of functions.\EMHC
\item $\mathcal{A}_{\eta}$ diagonalizes in Fourier space and corresponds to the symbol function \BMHC from \eqref{Eqn:SymbolA}.\EMHC
\item $\mathcal{A}_{\eta}$ is self-adjoint in the $\mathsf{L}^2$-sense.
\item The operators $\mathcal{A}_{\eta_1}$ and $ \mathcal{A}_{\eta_2}$ commute with each other for any $\eta_1, \eta_2>0$.
\item There exists a constant $C>0$, which does not depend on $\eta$, such that the estimates 
\begin{align*}
||\mathcal{A}_{\eta}W-W||_2\leq C\eta^2||W''||_2,\qquad ||\mathcal{A}_{\eta}W-W||_{\infty}\leq C\eta^2||W''||_{\infty}
\end{align*}
and
\begin{align*}
||\mathcal{A}_{\eta}W-W-\frac{\eta^2}{24}W''||_2\leq C\eta^4||W''''||_2,\qquad ||\mathcal{A}_{\eta}W-W-\frac{\eta^2}{24}W''||_{\infty}\leq C\eta^4||W''''||_{\infty}
\end{align*}
hold for any sufficiently regular $W$. In particular, we have
\begin{align}\label{rigorexp}
\mathcal{A}_{\eta}W\rightarrow W\quad \text{strongly in}\quad \mathsf{L}^2(\mathbb{R})
\end{align}
for any $W\in \mathsf{L}^2(\mathbb{R})$.
\end{enumerate}
\end{Lem}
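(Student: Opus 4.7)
The plan is to recognize $\mathcal{A}_\eta$ as convolution with the normalized indicator $\chi_\eta:=\eta^{-1}\mathbf{1}_{[-\eta/2,\eta/2]}$, which reduces most items to standard facts about convolutions and Fourier multipliers. A direct integration gives $\widehat{\chi_\eta}(z)=\sinc(\eta z/2)$, which establishes item (4) and immediately yields (5) and (6): the kernel $\chi_\eta$ is even, so $\mathcal{A}_\eta$ is self-adjoint on $\mathsf{L}^2(\mathbb{R})$, and two such convolution operators commute because their Fourier symbols multiply commutatively. Item (3) is immediate from the symmetry of the averaging interval around $\xi$: the change of variables $s\mapsto -s$ exchanges even and odd parity, and non-negativity of the integrand transfers to the integral.

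For the norm bounds in (1) and (2), I would invoke Cauchy--Schwarz on the defining integral $\mathcal{A}_\eta W(\xi)=\eta^{-1}\int_{\xi-\eta/2}^{\xi+\eta/2}W$ to obtain the pointwise bound $\|\mathcal{A}_\eta W\|_\infty\le \eta^{-1/2}\|W\|_2$; Young's inequality with $\|\chi_\eta\|_1=1$ gives the $\mathsf{L}^2\!\to\!\mathsf{L}^2$ and $\mathsf{L}^\infty\!\to\!\mathsf{L}^\infty$ bounds (equivalently, Plancherel combined with $|\sinc|\le 1$). The weak derivative is handled via the explicit identity $(\mathcal{A}_\eta W)'(\xi)=\eta^{-1}[W(\xi+\eta/2)-W(\xi-\eta/2)]$, first for smooth $W$ and then by density; the claimed norm bound then follows either from this representation or, more cleanly, from the Fourier symbol $iz\,\sinc(\eta z/2)=2i\sin(\eta z/2)/\eta$ of the differentiated operator.

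Item (7) is the technical heart of the lemma. For the leading-order estimates I would Taylor-expand $W(\xi+s)$ around $\xi$ with integral remainder, substitute into $(\mathcal{A}_\eta W-W)(\xi)=\eta^{-1}\int_{-\eta/2}^{\eta/2}[W(\xi+s)-W(\xi)]\,ds$, observe that odd powers of $s$ drop out by symmetry of the interval, and control the remainder using Minkowski's integral inequality in the appropriate norm. For the sharper estimate, the same procedure pushed to fourth order singles out precisely the $(\eta^2/24)W''$-term and leaves a remainder involving $W^{(4)}$. The Fourier route is equally clean and uses the elementary inequalities $|\sinc(\eta z/2)-1|\le C\eta^2 z^2$ and $|\sinc(\eta z/2)-1+\eta^2 z^2/24|\le C\eta^4 z^4$, read off from the Taylor expansion of sine. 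Finally, the strong $\mathsf{L}^2$-convergence in \eqref{rigorexp} for generic $W\in\mathsf{L}^2(\mathbb{R})$ follows by an $\varepsilon/3$-argument: approximate $W$ by a smooth $\tilde W$ in $\mathsf{L}^2$, apply the $W''$-bound to $\tilde W$, and use the uniform operator estimate $\|\mathcal{A}_\eta\|_{2\to 2}\le 1$ together with self-adjointness to control the error terms. Equivalently, dominated convergence on $|\sinc(\eta z/2)-1|^2|\hat W(z)|^2$ yields the claim in Fourier variables.

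No step is a serious obstacle; the only care required is bookkeeping in item (7), namely keeping track of the correct powers of $\eta$ in the Taylor remainders and ensuring that the density argument for \eqref{rigorexp} goes through without assuming more than $\mathsf{L}^2$-regularity of $W$. All remaining items are routine consequences of the convolution representation.
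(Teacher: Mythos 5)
Your proposal follows exactly the route the paper takes: the paper's entire proof consists of observing that $\mathcal{A}_\eta W=\chi_\eta\ast W$ with $\|\chi_\eta\|_1=1$ and that \eqref{Eqn:SymbolA} is the Fourier transform of $\chi_\eta$, and then citing standard convolution/Fourier facts; you have simply written out those standard facts, and items (2)--(7) all go through as you describe.

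The one place where your argument does not deliver what you claim is the weak-derivative estimate in item (1). Both of your suggested routes --- the difference-quotient identity $(\mathcal{A}_\eta W)'(\xi)=\eta^{-1}\bigl[W(\xi+\eta/2)-W(\xi-\eta/2)\bigr]$ and the Fourier symbol $2i\sin(\eta z/2)/\eta$ --- yield the constant $2\eta^{-1}$, and since $\sup_z\bigl|2\sin(\eta z/2)/\eta\bigr|=2/\eta$ this is in fact the exact operator norm of $W\mapsto(\mathcal{A}_\eta W)'$ on $\mathsf{L}^2(\mathbb{R})$. Consequently the bound $\|(\mathcal{A}_\eta W)'\|_2\le\eta^{-1/2}\|W\|_2$ stated in the lemma is not attainable for $\eta<4$ (test with $\widehat W$ concentrated near $z=\pi/\eta$); the $\eta^{-1/2}$ appears to be carried over from the $\mathsf{L}^\infty$ estimate, and the correct conclusion of your argument is $\|(\mathcal{A}_\eta W)'\|_2\le 2\eta^{-1}\|W\|_2$. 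This is a defect of the statement rather than of your method --- and the estimate is not used elsewhere in the paper with the $\eta^{-1/2}$ scaling --- but you should state the constant your computation actually produces instead of asserting that the printed one ``follows.''
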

\begin{proof}
\BMHC Equation \eqref{Def:OperatorAeta} can be written as $\mathcal{A}_\eta W=\chi_\eta\ast W$, where $
\chi_\eta$ is the normalized indicator function of an interval and satisfies $\|\chi_\eta\|_1=1$. Moreover, the function in \eqref{Eqn:SymbolA} is just the Fourier transform of $\chi_\eta$.
All assertions thus follow from standard arguments concerning convolutions and their Fourier representations, see \cite[Lemma 2.5]{Her10}, \cite[Lemma 5]{HML15} and also \cite{Che13,Che17} for the details.\EMHC
\end{proof}
\subsection{Linear main part} 
Since for traveling waves the functions $F_i^m$ represent the partial derivatives of the effective potentials, see for instance \eqref{motion},
a \BMHC first \EMHC natural condition can be formulated as follows.
 \begin{Ass}[\BMHC linear and quadratic coefficients \EMHC]\label{Ass sym}
The \BMHC Taylor \EMHC coefficients of $F_i^m$, see \eqref{Ide1}, satisfy
\begin{align*}
\alpha_{1,2}^m=\alpha_{2,1}^m\,,\qquad \beta_{1,22}^m=\beta_{2,12}^m\,,\qquad  \beta_{1,12}^m=\beta_{2,11}^m,
\end{align*}
for all $m=1,...,M$.
 \end{Ass}
As already explained in the introduction, our strategy is to collect the linear and the quadratic terms from \eqref{equep1} into two operators $\mathcal{B}_\epsilon$ and $\mathcal{Q}_\epsilon$, respectively, as in \eqref{BW}. A first natural choice for the linear part would be 
\begin{align}
\label{Bcan}
\mathcal{B}_{\epsilon}^{\text{can}}:=\frac{1}{\epsilon^2}\begin{pmatrix}
\sigma_{\epsilon}-\sum_{m=1}^Mk_m^2\alpha_{1,1}^m\mathcal{A}_{k_m\epsilon}^2& -\sum_{m=1}^Mk_m^2\alpha_{1,2}^m\mathcal{A}_{k_m\epsilon}^2 \\\\
 -\sum_{m=1}^Mk_m^2\alpha_{2,1}^m\mathcal{A}_{k_m\epsilon}^2 &  \sigma_{\epsilon}-\sum_{m=1}^Mk_m^2\alpha_{2,2}^m\mathcal{A}_{k_m\epsilon}^2
\end{pmatrix},
 \end{align} 
but this operator does not converge as $\epsilon\to0$. In fact, \BMHC in view of \EMHC \eqref{speed} the dominant part is given by
\begin{align*}
\mathcal{B}_{\epsilon}^{\text{sing}}:=\frac{1}{\epsilon^2}\begin{pmatrix}
\sigma_{0}-c_1& -c_2 \\
 -c_2 &  \sigma_{0}-c_3
\end{pmatrix}
 \end{align*} 
 with
\begin{align}
\label{Eqn:DefLinCoeff}
c_1:=\sum_{m=1}^{M}k_m^2\alpha_{1,1}^m,\qquad c_2:=\sum_{m=1}^{M}k_m^2 \alpha_{1,2}^m=\sum_{m=1}^{M}k_m^2 \alpha_{2,1}^m,\qquad c_3:=\sum_{m=1}^{M}k_m^2 \alpha_{2,2}^m
\end{align}
and 
diverges as $\epsilon\to0$. In what follows we therefore define the operator $\mathcal{B}_{\epsilon}$ in a slightly different \BMHC way. Moreover, we choose $\sigma_0$ such that $\mathcal{B}_{\epsilon}^{\text{sing}}$ has a nontrivial kernel because otherwise any solution to the leading order equation \eqref{BWLim} would vanish identically. The latter condition reads \EMHC 
\begin{equation}\label{eqsigma}
(c_1-\sigma_0)(c_3-\sigma_0)=c_2^2
\end{equation}
and provides two possible solution branches. However, it turns out that only the larger value of $\sigma_0$ is admissible for our asymptotic analysis, see the remarks at the beginning of \S\ref{sect:app}, and thus we set
\begin{equation}\label{defsig}
\sigma_0=\frac{(c_1+c_3)+\sqrt{(c_1-c_3)^2+c_2^2}}{2}.
\end{equation}
Using this particular choice of $\sigma_0$ we now define
\begin{align}
\label{Eqn:DefBEps}
\mathcal{B}_{\epsilon}:=\mathcal{T}_{\epsilon}\cdot\mathcal{B}_{\epsilon}^{\text{can}}
\end{align}
with
\begin{align}\label{deflam}
\mathcal{T}_{\epsilon}:=\begin{pmatrix}
1 & \lambda\\
0 & \epsilon^2
\end{pmatrix}, \qquad \lambda:=\frac{c_2}{\sigma_0-c_3}=\frac{\sigma_0-c_1}{c_2},
\end{align}
\BMHC which implies \EMHC that the corresponding \BMHC leading order \EMHC  part is given by the \BMHC non-invertible matrix \EMHC
\begin{align*}
\mathcal{T}_{\epsilon}\cdot\mathcal{B}_{\epsilon}^{\text{sing}}=\begin{pmatrix}
0 & 0\\
-c_2 &  \sigma_{0}-c_3
\end{pmatrix}.
\end{align*}
Using this definition of $\mathcal{B}_\epsilon$, the nonlinear integral equation \eqref{equep1} can \BMHC be \EMHC reformulated as \eqref{BW}, where the quadratic and higher terms correspond to operators $\mathcal{Q}_\epsilon$ and $\mathcal{P}_\epsilon$ with components
\begin{align}
\label{Eqn:DefQeps}
\begin{split}
(\mathcal{Q}_{\epsilon}[W])_{1}&:=\frac{1}{2}\sum_{m=1}^{M}\Big(k_m^3(\beta_{1,11}^m+\lambda\beta_{2,11}^m)\mathcal{A}_{k_m\epsilon}(\mathcal{A}_{k_m\epsilon}W_1)^2+2k_m^3(\beta_{1,12}^m+\lambda\beta_{2,12})\\
&\qquad\qquad + \mathcal{A}_{k_m\epsilon}\Big((\mathcal{A}_{k_m\epsilon}W_1)(\mathcal{A}_{k_m\epsilon}W_2)\Big)+k_m^3(\beta_{1,22}^m+\lambda\beta_{2,22}^m)\mathcal{A}_{k_m\epsilon}(\mathcal{A}_{k_m\epsilon}W_2)^2\Big)
\\
(\mathcal{Q}_{\epsilon}[W])_{2}&:=\frac{\epsilon^2}{2}\sum_{m=1}^{M}\Big(k_m^3\beta_{2,11}^m\mathcal{A}_{k_m\epsilon}(\mathcal{A}_{k_m\epsilon}W_1)^2+2k_m^3\beta_{2,12}^m\mathcal{A}_{k_m\epsilon}(\mathcal{A}_{k_m\epsilon}W_1)(\mathcal{A}_{k_m\epsilon}W_2)\\
&\qquad\qquad  +k_m^3\beta_{2,22}^m\mathcal{A}_{k_m\epsilon}(\mathcal{A}_{k_m\epsilon}W_2)^2\Big)
\end{split}
\end{align}
and 
\begin{align}
\notag%
(\mathcal{P}_{\epsilon}[W])_{1}&:=\frac{1}{\epsilon^6}\sum_{m=1}^{M}k_m\Big(\Psi_1^m(\epsilon^2k_m\mathcal{A}_{k_m\epsilon}W_{1},\epsilon^2k_m\mathcal{A}_{k_m\epsilon}W_{ 2})+\lambda  \Psi_2^m(\epsilon^2k_m\mathcal{A}_{k_m\epsilon}W_{ 1},\epsilon^2k_m\mathcal{A}_{k_m\epsilon}W_{2})\Big)\\
\label{Eqn:DefPeps}
(\mathcal{P}_{\epsilon}[W])_{2}&:=\frac{1}{\epsilon^4}\sum_{m=1}^{M}k_m\Psi_2^m(\epsilon^2k_m\mathcal{A}_{k_m\epsilon}W_{ 1},\epsilon^2k_m\mathcal{A}_{k_m\epsilon}W_ {2}).
\end{align}
\subsection{Leading order problem and KdV wave}
\BC By straightforward \EC\BMHC computations we verify the formal expansion
\begin{align*}
\mathcal{B}_{\epsilon}W:=\mathcal{B}_{0}W+O(\epsilon^2)\,,\qquad Q_\epsilon[W] =Q_0[W] 
+O(\epsilon^2)
\end{align*}
with leading order terms
\begin{align}
\label{Eqn:LeadingOrder1}
\mathcal{B}_0W:=
\begin{pmatrix}
W_1-(a_1+\lambda a_2)W_1^{\prime\prime}&&\lambda W_2-(b_2+\lambda b_1)W_2^{\prime\prime}\\
-c_2W_1&&(\sigma_0-c_3)W_2
\end{pmatrix}
\end{align}
and
\begin{align}
\label{Eqn:LeadingOrder2}
Q_0[W] =
\begin{pmatrix}
(a_3+\lambda  b_4)W_1^2 + (a_5+\lambda  b_5)W_1W_2+(a_4+\lambda  b_3)W_2^2\\0 
\\
\end{pmatrix} .
\end{align}
Here, the constants are given by 
\begin{align*}
a_1:=\sum_{m=1}^{M}\frac{k_m^4}{12} \alpha_{1,1}^m,\qquad a_2:=\sum_{m=1}^{M}\frac{k_m^4}{12} \alpha_{1,2}^m,\qquad 
b_1:=\sum_{m=1}^{M}\frac{k_m^4}{12} \alpha_{2,2}^m,\qquad b_2:=\sum_{m=1}^{M}\frac{k_m^4}{12} \alpha_{2,1}^m \qquad
\end{align*}
as well as
\begin{align*}
a_3:= \sum_{m=1}^{M}\frac{k_m^3}{2} \beta_{1,11}^m ,\qquad  a_4:= \sum_{m=1}^{M}\frac{k_m^3}{2} \beta_{1,22}^m, \qquad  a_5:= \sum_{m=1}^{M}k_m^3 \beta_{1,12}^m
\end{align*}
and
\begin{align*}
b_3:= \sum_{m=1}^{M}\frac{k_m^3}{2} \beta_{2,22}^m, \qquad  b_4:= \sum_{m=1}^{M}\frac{k_m^3}{2} \beta_{2,11}^m, \qquad  b_5:= \sum_{m=1}^{M}k_m^3 \beta_{2,12}^m.
\end{align*}
The first and the second component of the leading order equation \eqref{BWLim} can therefore be written as \eqref{Ws1} and \eqref{Ws}, respectively, where 
\begin{align}\label{defd}
d_1:=\frac{(1+\lambda^2)}{(a_1+\lambda a_2)+\lambda(b_2+\lambda b_1)}\,,\qquad  d_2:=\frac{(a_3+\lambda^2 a_4+\lambda a_5)+\lambda(\lambda b_5+\lambda^2 b_3+b_4)}{(a_1+\lambda a_2)+\lambda(b_2+\lambda b_1)}
\end{align}
denote the constants in \eqref{Ws}. This ODE for the KdV wave is a planar Hamiltonian system with potential energy $E[W]:=\frac{1}{3}d_2W^3-\frac{1}{2}d_1W^2$, see Figure \ref{Fig.ODE}, and its coefficients $d_1$, $d_2$ are completely determined by the linear and quadratic terms of $F_i^m$. A simple phase plane analysis reveals that \eqref{Ws} admits a non-trivial homoclinic orbit \EMHC \BC with unstable equilibrium at $0$ \EC \BMHC if and only if $d_1>0$ and $d_2\neq0$. Moreover, we readily verify that the corresponding even solution $W_*\in\mathsf{L}_{\rm{even}}^2(\mathbb{R})$ is given by 
\begin{align}\label{formelW}
W_*(\xi)=\frac{3d_1}{2d_2} {\sech}^2\big(\tfrac{1}{2}\sqrt{d_1}\xi\big)
\end{align}
and corresponds to the shape parameters
\begin{align}
\label{Eqn:ShapePrms}
p_1:=\max\limits_{\xi\in \mathbb{R}}W_*(\xi)=\frac{3d_1}{2d_2},\,\qquad
p_2:=\max\limits_{\xi\in \mathbb{R}}W'_*(\xi)=\sqrt{\frac{d_1^3}{3d_2^2}}.
\end{align}
\begin{figure}[ht!] %
\begin{center} %
   \includegraphics[width=0.75\textwidth]{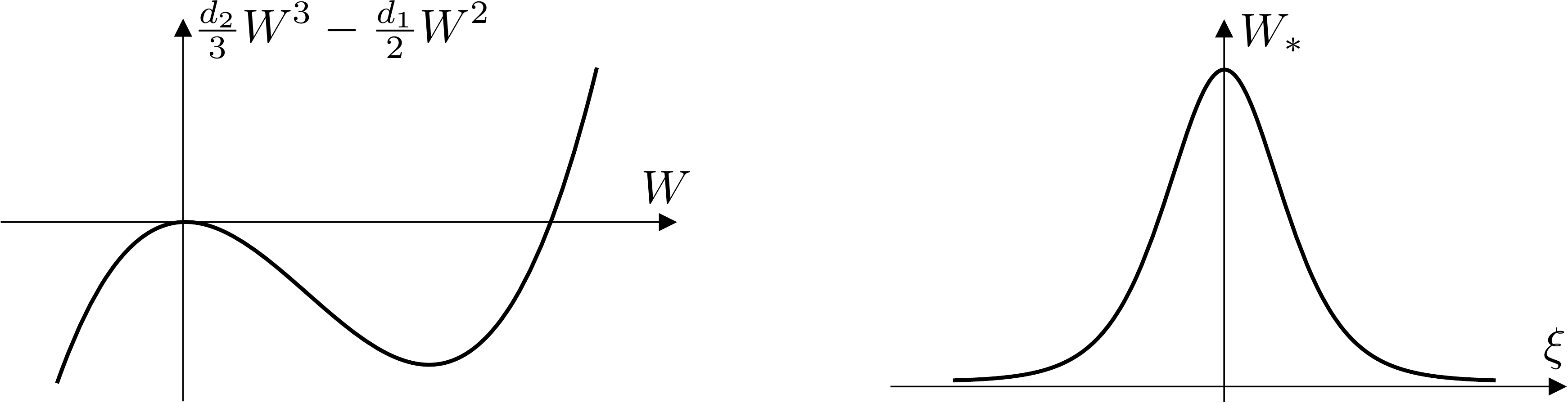} %
\end{center} %
\caption{ %
\textit{Left panel}: Graph of the potential energy \BMHC for the limit ODE \eqref{Ws}. \textit{Right panel}: The unique homoclinic solution in \EMHC  $\mathsf{L}_{\rm{even}}^2(\mathbb{R})$, which corresponds to the region between the two zeros of $E[W]:=\frac{d_2}{3}W^3-\frac{d_1}{2}W^2$. 
} %
\label{Fig.ODE} %
\end{figure}%
\EMHC
\BC In this paper we always assume that the leading order problem \eqref{Ws} admits the homoclinic solution \eqref{formelW}. To avoid technicalities also suppose that $c_2$ does not vanish and restrict our considerations to the case $\sigma_0>0$, which provides a uniform lower bound for the speed $\sqrt{\sigma_\varepsilon}$ of the solitary lattice waves according to \eqref{speed}. The one-dimensional analogue to the latter condition is a nonvanishing (and real-valued) sound speed, see \cite{HML15}. In summary, we rely on the following assumption.
\EC \begin{Ass}[\BC genericity \EC and well-definedness of KdV waves]
The constants defined in \BC \eqref{Eqn:DefLinCoeff}, \eqref{defsig}, and \eqref{defd} \EC satisfy
\begin{enumerate}
\label{Ass2}
\item \BC $ \sigma_0>0$ and $c_2\neq0$, \EC
\item \BC $d_1>0$ and $d_2\neq0$.\EC
 \end{enumerate}
\end{Ass} 
\subsection{Fixed-point equation for the corrector  \texorpdfstring{$V_\varepsilon$}{V}}\label{sect:reform4}
Having determined $\sigma_0$ and $W_0$, we turn to the discussion of the equation for the corrector term $V_{\epsilon}$. \BMHC Substituting the ansatz \eqref{decompos} \EMHC into \eqref{BW} we obtain
\begin{align}
\label{vorfix} %
\mathcal{L}_{\epsilon}V_{\epsilon}:=(\mathcal{B}_{\epsilon}-\mathcal{M}_{\epsilon})V_{\epsilon}=\epsilon^2\big(\mathcal{Q}_{\epsilon}[V_{\epsilon}]+\mathcal{N}_{\epsilon}[W_{0};V_{\epsilon}]\big)+\big(R_{\epsilon}[W_{0}]+\mathcal{P}_{\epsilon}[W_{0}]\big),
\end{align}
where
\begin{align*}
R_{\epsilon}[W_{0}]:=\frac{\mathcal{Q}_{\epsilon}[W_{0}]-\mathcal{B}_{\epsilon}W_{0}}{\epsilon^2},\qquad \mathcal{N}_{\epsilon}[W_{0};V_\epsilon]:=\frac{\mathcal{P}_{\epsilon}[W_{0}+\epsilon^2V_\epsilon]-\mathcal{P}_{\epsilon}[W_{0}]}{\epsilon^2}.
\end{align*}
\BMHC Moreover, the linear operator
$\mathcal{M}_{\epsilon}$ depends on the KdV wave $W_*$ from \eqref{formelW} and is given by\EMHC 
\begin{align}\label{defM}
\mathcal{M}_{\epsilon}:=\begin{pmatrix}
\sum_{m=1}^{M}\eta_{1,1}^m\mathcal{A}_{k_m\epsilon}\Big((\mathcal{A}_{k_m\epsilon}W_*)\mathcal{A}_{k_m\epsilon}\Big) & \sum_{m=1}^{M}\eta_{1,2}^m\mathcal{A}_{k_m\epsilon}\Big((\mathcal{A}_{k_m\epsilon}W_*)\mathcal{A}_{k_m\epsilon}\Big)\\
 \epsilon^2\sum_{m=1}^{M}\eta_{2,1}^m\mathcal{A}_{k_m\epsilon}\Big((\mathcal{A}_{k_m\epsilon}W_*)\mathcal{A}_{k_m\epsilon}\Big)& \epsilon^2\sum_{m=1}^{M}\eta_{2,2}^m\mathcal{A}_{k_m\epsilon}\Big((\mathcal{A}_{k_m\epsilon}W_*)\mathcal{A}_{k_m\epsilon}\Big)
 \end{pmatrix}
  \end{align}
\BMHC with  constants \EMHC
\begin{align*}
\eta_{1,1}^m&:=k_m^3\big((\beta_{1,11}^m+\lambda\beta_{1,12}^m)+\lambda( \lambda \beta_{2,12}^m+\beta_{2,11}^m)\big),\\
\eta_{1,2}^m&:=k_m^3\big((\lambda \beta_{1,22}^m+\beta_{1,12}^m)+\lambda(\lambda \beta_{2,22}^m+\beta_{2,12}^m)\big),\\
\eta_{2,1}^m&:=k_m^3( \lambda \beta_{2,12}^m+\beta_{2,11}^m),\\
\eta_{2,2}^m&:=k_m^3(\lambda \beta_{2,22}^m+\beta_{2,12}^m).
\end{align*}
The uniform invertibility of $\mathcal{L}_{\epsilon}$ will be \BMHC crucial in our \EMHC existence proof, for it allows us to \BMHC reformulate \eqref{vorfix} as the fixed point equation \eqref{Eqn:FixedPoint}, where the operator
\begin{align}\label{endeq}
\mathcal{F}_{\epsilon}[V_{\epsilon}]:=\epsilon^2\mathcal{L}_{\epsilon}^{-1}\Big(\mathcal{Q}_{\epsilon}[V_{\epsilon}]+\mathcal{N}_{\epsilon}[W_{0};V_{\epsilon}]\Big)+\mathcal{L}_{\epsilon}^{-1}\Big(R_{\epsilon}[W_{0}]+\mathcal{P}_{\epsilon}[W_{0}]\Big),
\end{align}
turns out to be contractive in some bounded set provided that $\epsilon$ is sufficiently small. Before we discuss the construction and the properties of $\mathcal{F}_\epsilon$ in greater detail, we formulate a further assumption which guarantees that the higher order terms are negligible compared to the linear and quadratic terms. In the \EMHC simplest case we have $\Psi_i^m\equiv0$.
\begin{Ass}[regularity of higher order terms]\label{Ass1}
The remainder terms $\Psi_i^m$ \BMHC in the Taylor expansion \eqref{Ide1} \EMHC satisfy
\begin{equation*}
\Psi_i^m(0,0)=0
\end{equation*}
and there exists constants \BMHC $\gamma_i^m$ such that \EMHC
\begin{equation*}
\big|\Psi_i^m(x_1,x_2)-\Psi_i^m(y_1,y_2)\big|\leq \gamma_i^m\big(x_1^2+x_2^2+y_1^2+y_2^2\big)\big(|x_1-y_1|+|x_2-y_2|\big)
\end{equation*}
for all $(x_1,x_2), (y_1,y_2)\in \mathbb{R}^2$, $i=1,2$ and $m=1,...,M$.
\end{Ass}
\subsection{Inverting the operator \texorpdfstring{$\mathcal{B}_\epsilon$}{B} }\label{sect:invB}
\BMHC A particular challenge in the existence proof for $\mathcal{L}_\epsilon^{-1}$ is to invert the operator $\mathcal{B}_\epsilon$ \BC from \eqref{Eqn:DefBEps} \EC \BMHC for small values of $\epsilon>0$.
Since all operators $\mathcal{A}_{k_m\epsilon}$ commute with each another, it is 
 quite natural to consider the `determinant' of $\mathcal{B}_{\epsilon}$ defined by 
\begin{equation}
\label{Eqn:DefDetBEps}
\det{\mathcal{B}_{\epsilon}}:=\mathcal{B}_{\epsilon,11}\mathcal{B}_{\epsilon,22}-\mathcal{B}_{\epsilon,12}\mathcal{B}_{\epsilon,21}.
\end{equation}
In fact, if this operator is invertible on $\mathsf{L}^2(\mathbb{R})$, a straightforward calculation reveals that the inverse of $\mathcal{B}_{\epsilon}$ can be written as
\begin{align}
\label{Eqn:InvFormula}
\mathcal{B}_{\epsilon}^{-1}&:=(\det{\mathcal{B}_{\epsilon}})^{-1}\begin{pmatrix}
+\mathcal{B}_{\epsilon, 22}& -\mathcal{B}_{\epsilon, 12}\\
 -\mathcal{B}_{\epsilon, 21}&  +\mathcal{B}_{\epsilon, 11}
  \end{pmatrix}.
  \end{align}
In order to prove the uniform invertibility of $\det\mathcal{B}_{\epsilon}$, we introduce two auxiliary functions
\begin{equation}\label{connect}
S_{k}(z):=1-{\sinc}^2(kz/2)\,,\qquad s_{k,\epsilon}(z):=\frac{1}{\epsilon^2}S_{k}(\epsilon z)=\frac{1-{\sinc}^2(k\epsilon z/2)}{\epsilon^2},
\end{equation}
for $k\in\mathbb{R}$ and $z\in \mathbb{R}$, see Figure \ref{Fig:Sinc} and \eqref{Eqn:SymbolA}. Moreover, we rely on the following assumption, which will be checked numerically in \S\ref{sect:app} for \BMHC several lattice geometries. \EMHC
\begin{Ass}[conditions for the inversion of $\mathcal{B}_{\epsilon}$]\label{Ass inverse} 
There exists a constant \BMHC $\delta_0>0$, \EMHC such that
\begin{align*}
T(z)&:=\big(2\sigma_0-(c_1+c_3)\big)\cdot \Bigg( %
\sum_{m=1}^{M}k_m^2\Big((\sigma_0-c_3)\alpha_{1,1}^m+(\sigma_0-c_1)\alpha_{2,2}^m+2c_2\alpha_{1,2}^mS_{k_m}(z)\Big)+
\\&+\Big(\sum_{m=1}^{M}k_m^2\alpha_{1,1}^m S_{k_m}(z)\Big)\Big(\sum_{m=1}^{M}k_m^2\alpha_{2,2}^m S_{k_m}(z)\Big)- \Big(\sum_{m=1}^{M} k_m^2\alpha_{1,2}^mS_{k_m}(z)\Big)^2
\Bigg)\geq \delta_0\big(\min\{|z|,2\}\big)^2
\end{align*}
holds for all $z\in \mathbb{R}$. 
\end{Ass}
\BMHC The function $T$ is independent of $\epsilon$ and can be locally expanded as $T(z)=\tau z^2+O(z^4)$ near $z=0$, where $\tau:=\frac{1}{24}\sum_{m=1}^{M}k_m^4\big((\sigma_0-c_3)\alpha_{1,1}^m+(\sigma_0-c_1)\alpha_{2,2}^m+2c_2\alpha_{1,2}^m)\big)$. If $\tau>0$, then the required condition is satisfied in the vicinity of $z=0$ for any $0<\delta_0<\tau$.\EMHC
 \begin{figure}[ht!]
\begin{center}
   \includegraphics[width=0.8\textwidth]{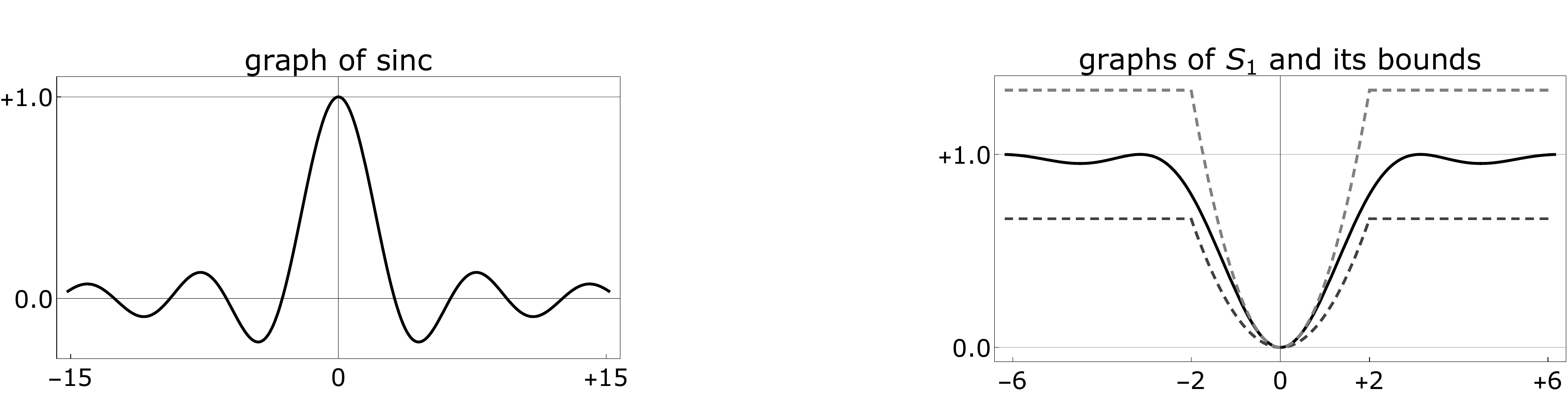}
\end{center} 
\caption{\textit{Left panel}: Graph of the sinc function. \textit{Right panel}: Lower bound $\tfrac{1}{6}\min\{|z|,2\}^2$ (dashed) and upper bound $\tfrac{1}{3}\min\{|z|,2\}^2$ (dashed) for $S_1=1-\sinc^2$ (solid). }
\label{Fig:Sinc}
\end{figure}
  \begin{Lem}[\BMHC existence of $\mathcal{B}_\epsilon^{-1}$\EMHC] \label{estB}
\BMHC Assumption~\ref{Ass inverse} implies for \EMHC \BC all \EC \BMHC sufficiently small $\epsilon>0$ 
that the symbol function $\widehat{\det(\mathcal{B}_{\epsilon})}$ of the operator from \EMHC \eqref{Eqn:DefDetBEps} satisfies
\begin{align*}
\widehat{\det(\mathcal{B}_{\epsilon})}(z)\geq\BC 2\sigma_0-(c_1+c_3)\EC+\frac{\delta}{\epsilon^2}(\min\{|\epsilon z|,2\})^2
  \end{align*}
\BMHC for all $z\in \mathbb{R}$ and some constant $\delta>0$ independent of $\epsilon$, \EMHC
\BC where $2\sigma_0-(c_1+c_3)>0$ holds according to \eqref{defsig}. In particular,  $\det\mathcal{B}_\epsilon$ is uniformly invertible on $\mathsf{L}^2(\mathbb{R})$. \EC
\end{Lem}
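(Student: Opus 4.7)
The plan is to work entirely on the Fourier side. By Lemma~\ref{Prop A}(4) each operator $\mathcal{A}_{k_m\epsilon}^2$ corresponds to the symbol $\sinc^2(k_m\epsilon z/2)=1-S_{k_m}(\epsilon z)$, so using the factorisation $\mathcal{B}_\epsilon=\mathcal{T}_\epsilon\cdot\mathcal{B}_\epsilon^{\rm can}$ together with $\det\mathcal{T}_\epsilon=\epsilon^2$ I would first compute
\begin{align*}
\widehat{\det(\mathcal{B}_\epsilon)}(z)=\frac{1}{\epsilon^2}\Bigl[(\sigma_\epsilon-c_1+\hat A(z))(\sigma_\epsilon-c_3+\hat B(z))-(c_2-\hat C(z))^2\Bigr],
\end{align*}
where $\hat A(z)=\sum_m k_m^2\alpha_{1,1}^m S_{k_m}(\epsilon z)$ and $\hat B,\hat C$ are defined analogously with coefficients $\alpha_{2,2}^m,\alpha_{1,2}^m$. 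Here Assumption~\ref{Ass sym} guarantees that the off-diagonal symbol involves only $\alpha_{1,2}^m=\alpha_{2,1}^m$, so that the product of the two off-diagonal entries is truly a perfect square.

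Next I would expand the bracket, substitute $\sigma_\epsilon=\sigma_0+\epsilon^2$, and exploit the defining identity $(\sigma_0-c_1)(\sigma_0-c_3)=c_2^2$ from \eqref{eqsigma} to cancel the $O(\epsilon^{-2})$ constant contribution. Setting $X_1:=\sigma_0-c_1$ and $X_3:=\sigma_0-c_3$, this rearrangement yields
\begin{align*}
\widehat{\det(\mathcal{B}_\epsilon)}(z)=\bigl(2\sigma_0-(c_1+c_3)\bigr)+\epsilon^2+\bigl(\hat A(z)+\hat B(z)\bigr)+\frac{X_3\hat A(z)+X_1\hat B(z)+2c_2\hat C(z)+\hat A(z)\hat B(z)-\hat C(z)^2}{\epsilon^2},
\end{align*}
and inspection reveals that the numerator of the last fraction is, up to the prefactor made explicit in Assumption~\ref{Ass inverse}, exactly $T(\epsilon z)$.

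It then remains to combine Assumption~\ref{Ass inverse} with the uniform pointwise estimate $0\le S_{k_m}(w)\le C_m\min\{|w|,2\}^2$ (valid since $S_{k_m}$ behaves like $k_m^2w^2/12$ near the origin and is globally bounded by $1$, see the right panel of Figure~\ref{Fig:Sinc}). This gives on the one hand the decisive lower bound $T(\epsilon z)\ge\delta_0\min\{|\epsilon z|,2\}^2$ contributing the $\delta/\epsilon^2$-term, and on the other hand the auxiliary estimate $|\hat A(z)+\hat B(z)|\le C\min\{|\epsilon z|,2\}^2$. Choosing $\epsilon>0$ small enough that the dominant positive $T$-contribution absorbs both $-|\hat A+\hat B|$ and a fraction of itself, I obtain the claimed pointwise bound with some $\delta>0$ independent of $\epsilon$. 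Positivity of $2\sigma_0-(c_1+c_3)$ follows from \eqref{defsig} since $\sqrt{(c_1-c_3)^2+c_2^2}>0$ by Assumption~\ref{Ass2}, and uniform invertibility of $\det\mathcal{B}_\epsilon$ on $\mathsf{L}^2(\mathbb{R})$ is then immediate from the fact that the symbol is bounded below by a strictly positive constant.

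The main obstacle is not conceptual but bookkeeping: the algebraic collapse that identifies $T(\epsilon z)$ hinges crucially on the quadratic identity \eqref{eqsigma} for $\sigma_0$ and on the symmetry $\alpha_{1,2}^m=\alpha_{2,1}^m$, and one has to track sign conventions and the normalising prefactor in the definition of $T$ carefully in order to read off a clean, $\epsilon$-independent $\delta$.
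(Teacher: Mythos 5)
Your proposal is correct and follows essentially the same route as the paper: compute the Fourier symbol of $\det\mathcal{B}_\epsilon$ via the factorisation $\det\mathcal{T}_\epsilon=\epsilon^2$, cancel the $O(\epsilon^{-2})$ constant with the identity $(\sigma_0-c_1)(\sigma_0-c_3)=c_2^2$, identify the remaining $\epsilon^{-2}$-term as $T(\epsilon z)$ up to the prefactor $2\sigma_0-(c_1+c_3)$, and absorb the bounded remainder $\hat A+\hat B$ (the paper's $T_1$) into the $T$-contribution for small $\epsilon$ using $|S_{k_m}(w)|\leq C_m(\min\{|w|,2\})^2$. The bookkeeping you flag as the main obstacle is exactly what the paper's proof carries out, with the same conclusion on positivity and uniform invertibility.
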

\begin{proof}
We set
\begin{align*}
T_1(z):=\sum_{m=1}^{M}k_m^2(\alpha_{1,1}^m+\alpha_{2,2}^m)S_{k_m}(z)
\end{align*}
\BMHC and notice that for any $m=1,..,M$ we have $|S_{k_m}(z)|\leq C_{m}(\min\{|z|,2\})^2$
for some constant $C_m$. This implies \EMHC
\begin{align}\label{estT1}
|T_1(z)|\leq \delta_1\big(\min\{|z|,2\}\big)^2,
\end{align}
for all $z\in \mathbb{R}$ and a sufficiently large \BMHC constant \EMHC $\delta_1>0$. The Fourier \BMHC symbols \EMHC of the components of $\mathcal{B}_{\epsilon}^{\text{can}}$ are given by
\begin{align*}
\widehat{\mathcal{B}}^{\text{can}}_{\epsilon,11}&=\frac{1}{\epsilon^2}\Big((\sigma_0-c_1)+\epsilon^2(1+\sum_{m=1}^{M}k_m^2\alpha_{1,1}^ms_{k_m,\epsilon})\Big),\\ \widehat{\mathcal{B}}^{\text{can}}_{\epsilon,22}&=\frac{1}{\epsilon^2}\Big((\sigma_0-c_3)+\epsilon^2(1+\sum_{m=1}^{M}k_m^2\alpha_{2,2}^ms_{k_m,\epsilon})\Big),\\
\widehat{\mathcal{B}}^{\text{can}}_{\epsilon,12}=\widehat{\mathcal{B}}^{\text{can}}_{\epsilon,21}&=\frac{1}{\epsilon^2}\Big(-c_2+\epsilon^2 \sum_{m=1}^{M}k_m^2\alpha_{1,2}^ms_{k_m,\epsilon}\Big).
 \end{align*}
 Now we compute the Fourier symbol function $\widehat{\det(\mathcal{B}_{\epsilon})}$ of the determinant $\det(\mathcal{B}_{\epsilon})$ by substituting these expressions. \BMHC More precisely, 
combining the above formulas with \eqref{eqsigma} and \eqref{connect} we get\EMHC
 \begin{align*}
\widehat{\det(\mathcal{B}_{\epsilon})}=&\; \widehat{\det(\mathcal{T}_{\epsilon}\cdot\mathcal{B}_{\epsilon}^{\text{can}})}=\epsilon^2\big(\widehat{\mathcal{B}}^{\text{can}}_{\epsilon,11}\widehat{\mathcal{B}}^{\text{can}}_{\epsilon,22}-\widehat{\mathcal{B}}^{\text{can}}_{\epsilon,12}\widehat{\mathcal{B}}^{\text{can}}_{\epsilon,21}\big)
%
%
%\\=&\;\epsilon^{-2}\Bigg((\sigma_0-c_1)\cdot (\sigma_0-c_3)-c_2^2\Bigg)+\\
%&\;\epsilon^{0} \Bigg(2\sigma_0-(c_1+c_3)+\sum_{m=1}^{M}k_m^2\Big((\sigma_0-c_3)\alpha_{1,1}^m+(\sigma_0-c_1)\alpha_{2,2}^m+2c_2\alpha_{1,2}^m\big)s_{k_m,\epsilon}(z)\Bigg)+
%\\ %
%&\;\epsilon^{+2}\Bigg(1+\sum_{m=1}^{M}k_m^2(\alpha_{1,1}^m+\alpha_{2,2}^m)s_{k_m,\epsilon}(z)\\
%&\qquad\qquad+\Big(\sum_{m=1}^{M}k_m^2\alpha_{1,1}^m s_{k_m,\epsilon}(z)\Big)\Big(\sum_{m=1}^{M}k_m^2\alpha_{2,2}^m s_{k_m,\epsilon}(z)\Big)-\Big(\sum_{m=1}^{M} k_m^2\alpha_{1,2}^ms_{k_m,\epsilon}(z)\Big)^2\Bigg)
%
\\
=&\;\big(2\sigma_0-(c_1+c_3)+\epsilon^2\big)+\frac{T(\epsilon z)}{(2\sigma_0-(c_1+c_3))\epsilon^2}+T_1(\epsilon z),
\end{align*} 
\BC  and thanks to \eqref{defsig}, Assumption \ref{Ass inverse}, and \eqref{estT1} we show that \EC
\begin{align*}
\frac{T(\epsilon z)}{(2\sigma_0-(c_1+c_3))\epsilon^2}+T_1(\epsilon z)\geq \Bigg(\frac{\delta_0}{(2\sigma_0-(c_1+c_3))\epsilon^2}-\delta_1\Bigg) \big(\min\{|\epsilon z|,2\}\big)^2\geq \frac{\delta}{\epsilon^2}\big(\min\{|\epsilon z|,2\}\big)^2
\end{align*}
\BMHC holds for all sufficiently small $\epsilon>0$. This implies
\begin{align*}
\widehat{\det(\mathcal{B}_{\epsilon})} \geq 2\sigma_0-(c_1+c_3)+\frac{\delta}{\epsilon^2}(\min\{|\epsilon z|,2\})^2\geq 2\sigma_0-(c_1+c_3)>0
\end{align*} 
for all $z\in \mathbb{R}$, and we conclude that 
$\det(\mathcal{B}_{\epsilon})$ is uniformly invertible as its Fourier symbol is bounded from below by a positive constant independent of $\epsilon$. \EMHC 
\end{proof}
\BMHC In order to characterize the asymptotic properties of $\mathcal{B}_\epsilon$ we introduce a \EMHC cut-off operator 
\begin{align}
\label{piop1}
\Pi_{\epsilon}: \mathsf{L}^2(\mathbb{R})\rightarrow \mathsf{L}^2(\mathbb{R})
\end{align}
by \BMHC defining the \EMHC corresponding symbol function in Fourier space via
\begin{align}\label{piop2}
\pi_{\epsilon}(z):=\begin{cases}
 1 & \text{for}\ |z|\leq \frac{2}{\epsilon},\\
 0 &  \text{otherwise}.
 \end{cases}
\end{align}
\BMHC This gives rise to the following estimates for $\mathcal{B}_{\epsilon}^{-1}$, \EMHC which below remove the main difficulty in the inversion of $\mathcal{L}_{\epsilon}$. 
  \begin{Lem}[\BMHC properties of $\mathcal{B}_\epsilon^{-1}$\EMHC]\label{Lemma 6}
\BMHC There exists a constant $C$ independent of $\epsilon$ such that \EMHC
\begin{align*}
||\Pi_{\epsilon}(\mathcal{B}_{\epsilon}^{-1})_{i1}G||_{2,2}+\epsilon^{-2}||(\id-\Pi_{\epsilon})(\mathcal{B}_{\epsilon}^{-1})_{i1}G||_{2}\leq C||G||_2\,,\qquad
||(\mathcal{B}_{\epsilon}^{-1})_{i2}G||_2\leq C||G||_2
  \end{align*}
\BMHC hold for any $G\in \mathsf{L}^2(\mathbb{R})$ and all sufficiently small $\epsilon>0$.\EMHC
\end{Lem}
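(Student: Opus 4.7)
The plan is to work entirely in Fourier space, combining the explicit cofactor representation \eqref{Eqn:InvFormula} with the lower bound on $\widehat{\det\mathcal{B}_\epsilon}$ from Lemma~\ref{estB}, and to exploit the crucial cancellation built into the factor $\mathcal{T}_\epsilon$ from \eqref{deflam} when computing the symbols of the entries of $\mathcal{B}_\epsilon$.

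First, I would compute the symbols $\widehat{\mathcal{B}}_{\epsilon,ij}$ starting from the formulas for $\widehat{\mathcal{B}}^{\text{can}}_{\epsilon,ij}$ listed in the proof of Lemma~\ref{estB}. Using the identities $\lambda c_2 = \sigma_0 - c_1$ and $\lambda(\sigma_0 - c_3) = c_2$ that define $\lambda$, the a priori singular $1/\epsilon^2$ contributions in the first row cancel exactly, giving
\begin{align*}
\widehat{\mathcal{B}}_{\epsilon,11}(z) &= 1 + \sum_{m=1}^M k_m^2(\alpha_{1,1}^m + \lambda\alpha_{1,2}^m)\,s_{k_m,\epsilon}(z),\\
\widehat{\mathcal{B}}_{\epsilon,12}(z) &= \lambda + \sum_{m=1}^M k_m^2(\alpha_{1,2}^m + \lambda\alpha_{2,2}^m)\,s_{k_m,\epsilon}(z),
\end{align*}
while the second row, multiplied by the factor $\epsilon^2$ in $\mathcal{T}_\epsilon$, yields symbols that are uniformly $O(1)$. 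Using the elementary estimate $|S_{k_m}(\eta)|\leq C\min(|\eta|,2)^2$ and the scaling $\epsilon^2 s_{k_m,\epsilon}(z)=S_{k_m}(\epsilon z)$, I obtain the two-regime bounds $|\widehat{\mathcal{B}}_{\epsilon,2j}(z)|\leq C$ uniformly in $z$, whereas $|\widehat{\mathcal{B}}_{\epsilon,1j}(z)|\leq C(1+z^2)$ on $|z|\leq 2/\epsilon$ and $|\widehat{\mathcal{B}}_{\epsilon,1j}(z)|\leq C/\epsilon^2$ on $|z|>2/\epsilon$.

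Second, I would translate the estimate of Lemma~\ref{estB} into the two regime-dependent lower bounds $\widehat{\det\mathcal{B}_\epsilon}(z)\geq c(1+z^2)$ on the support of $\pi_\epsilon$ and $\widehat{\det\mathcal{B}_\epsilon}(z)\geq c/\epsilon^2$ on its complement, both with constants independent of $\epsilon$. Inserting these together with the upper bounds from the first step into the cofactor formula \eqref{Eqn:InvFormula} yields pointwise symbol estimates for $\mathcal{B}_\epsilon^{-1}$: for the first column, $|\widehat{(\mathcal{B}_\epsilon^{-1})_{i1}}(z)|\leq C/(1+z^2)$ on $|z|\leq 2/\epsilon$ and $|\widehat{(\mathcal{B}_\epsilon^{-1})_{i1}}(z)|\leq C\epsilon^2$ on $|z|>2/\epsilon$; for the second column, the ratios $(1+z^2)/(1+z^2)$ and $(1/\epsilon^2)/(1/\epsilon^2)$ cancel in both regimes, giving a uniform bound $|\widehat{(\mathcal{B}_\epsilon^{-1})_{i2}}(z)|\leq C$ on all of $\mathbb{R}$. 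Plancherel's theorem then delivers all claims: multiplying the first-column symbol by $(1+z^2)\pi_\epsilon(z)$ stays bounded, so $\|\Pi_\epsilon(\mathcal{B}_\epsilon^{-1})_{i1}G\|_{2,2}\leq C\|G\|_2$; the complementary part carries a prefactor $\epsilon^2$, which produces the weighted estimate $\epsilon^{-2}\|(\id-\Pi_\epsilon)(\mathcal{B}_\epsilon^{-1})_{i1}G\|_2\leq C\|G\|_2$; and the second-column bound is immediate from $L^\infty$-boundedness of its symbol.

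The conceptual heart of the argument is the first step: the dressing by $\mathcal{T}_\epsilon$ is calibrated precisely so that the first row of $\mathcal{B}_\epsilon$ becomes regular as $\epsilon\to0$ while the second row keeps a nontrivial $\sigma_0-c_3$ contribution, and without this cancellation the $H^2$-estimate on $\Pi_\epsilon(\mathcal{B}_\epsilon^{-1})_{i1}$ would fail. The main technical nuisance is rather the bookkeeping of which quantities are $O(1)$, $O(1+z^2)$, or $O(1/\epsilon^2)$ in which of the two Fourier regimes cut out by $\Pi_\epsilon$; once this is organized cleanly, the bounds themselves are routine consequences of Plancherel's identity.
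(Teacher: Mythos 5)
Your proposal is correct and follows essentially the same route as the paper: both arguments work in Fourier space, combine the cofactor formula \eqref{Eqn:InvFormula} with the determinant lower bound of Lemma~\ref{estB}, exploit that the second row of $\mathcal{B}_\epsilon$ has uniformly bounded symbols while the first-row symbols are controlled by the same quantity that bounds $\widehat{\det\mathcal{B}_\epsilon}$ from below, and conclude via Plancherel in the two regimes cut out by $\Pi_\epsilon$. Your explicit verification of the $1/\epsilon^2$-cancellation in the first row induced by $\mathcal{T}_\epsilon$ is slightly more detailed than the paper's presentation, but it is the same mechanism.
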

\begin{proof}
\BMHC Notice that for each $k>0$ there exists a constant $C_k>0$ such that the estimate $S_k(z)\leq C_k(\min\{|z|,2\})^2$ for all $z\in \mathbb{R}$, see Figure \ref{Fig:Sinc} and \eqref{connect}. Combining this with the Fourier representation of $\mathcal{A}_{k_m\epsilon}$ from Lemma \ref{Prop A}, Formula \eqref{Eqn:InvFormula}, and Lemma \ref{estB} we get
    \begin{align*}
    \big|\widehat{({\mathcal{B}}_\epsilon^{-1})_{11}}\big|=\big|\widehat{\det(\mathcal{B}_\epsilon)}^{-1}\widehat{\mathcal{B}}_{\epsilon,22}\big|&\leq \frac{\big|\sigma_0-c_3\big|+1+\sum_{m=1}^{M}k_m^2|\alpha_{2,2}^m|}{2\sigma_0-(c_1+c_3)+\frac{\delta}{\epsilon^2}(\min\{|\epsilon z|,2\})^2}\leq \BMHC \begin{cases}
 \frac{C}{1+z^2}& \text{for}\ |z|\leq \frac{2}{\epsilon },\\
 C\epsilon^2&  \text{otherwise,}
 \end{cases}
  \end{align*}
 and similarly we find 
\begin{align*}
    \big|\widehat{({\mathcal{B}}_\epsilon^{-1})_{21}}\big|=\big|\widehat{\det(\mathcal{B}_\epsilon)}^{-1}\widehat{\mathcal{B}}_{\epsilon, 21}\big|\leq \frac{|c_2|+\sum_{m=1}^{M}k_m^2|\alpha_{1,2}^m|}{2\sigma_0-(c_1+c_3)+\frac{\delta}{\epsilon^2}(\min\{|\epsilon z|,2\})^2}\leq\BMHC \begin{cases}
 \frac{C}{1+z^2}&\text{for}\ |z|\leq \frac{2}{\epsilon },\\
 C\epsilon^2& \text{otherwise},\EMHC
 \end{cases}
  \end{align*}
where we did not write the argument $z$ on the left hand side to ease the notation. Theses identities combined with \eqref{piop1} and \eqref{piop2} imply \EMHC
\begin{align*}
||\Pi_{\epsilon}(\mathcal{B}_{\epsilon}^{-1})_{i1}G||_{2,2}^2&\leq \int\limits_{|z|\leq \frac{2}{\epsilon}}(1+z^2+z^4) \Big(\widehat{({\mathcal{B}}_\epsilon^{-1})_{i1}G}\Big)^2 dz= \int\limits_{|z|\leq \frac{2}{\epsilon}}(1+z^2+z^4) \widehat{({\mathcal{B}}_\epsilon^{-1})_{i1}}^2\widehat{G}^2 dz\\
&\leq  C\int\limits_{|z|\leq \frac{2}{\epsilon}}\frac{1+z^2+z^4}{1+2z^2+z^4}\widehat{G}^2 dz\leq C||G||_2^2
\end{align*}
\BMHC as well as \EMHC
\begin{align*}
||(\id-\Pi_{\epsilon})(\mathcal{B}_{\epsilon}^{-1})_{i1}G||_{2,2}^2&\leq \int\limits_{|z|\geq \frac{2}{\epsilon}}\Big(\widehat{(\mathcal{B}_\epsilon^{-1})_{i1}G}\Big)^2 dz= \int\limits_{|z|\geq \frac{2}{\epsilon}}\widehat{({\mathcal{B}}_\epsilon^{-1})_{i1}}^2\widehat{G}^2 dz\\
&\leq  C\epsilon^4\int\limits_{|z|\geq \frac{2}{\epsilon}}\widehat{G}^2 dz\leq C\epsilon^4||G||_2^2
\end{align*}
for all $G\in \mathsf{L}^2(\mathbb{R})$ and $i=1,2$. \BMHC In particular, we showed the first claimed estimate. Towards the second one, we observe that
 \begin{align*}
    \big|\widehat{({\mathcal{B}}_\epsilon^{-1})_{12}}\big|=\big|\widehat{\det(\mathcal{B}_\epsilon)}^{-1}\widehat{\mathcal{B}}_{\epsilon,12}\big|&\leq \frac{\lambda+\Big(\sum_{m=1}^{M}k_m^2(|\alpha_{2,1}^m|+\lambda|\alpha_{2,2}^m|)\Big)\frac{\kappa}{\epsilon^2}(\min\{|\epsilon z|,2\})^2}{2\sigma_0-(c_1+c_3)+\frac{\delta}{\epsilon^2}(\min\{|\epsilon z|,2\})^2}\leq C
  \end{align*}
 and 
 \begin{align*}
    \big|\widehat{({\mathcal{B}}_\epsilon^{-1})_{12}}\big|=\big|\widehat{\det(\mathcal{B}_\epsilon)}^{-1}\widehat{\mathcal{B}}_{\epsilon,12}\big|&\leq \frac{\lambda+\Big(\sum_{m=1}^{M}k_m^2(|\alpha_{2,1}^m|+\lambda|\alpha_{2,2}^m|)\Big)\frac{\kappa}{\epsilon^2}(\min\{|\epsilon z|,2\})^2}{2\sigma_0-(c_1+c_3)+\frac{\delta}{\epsilon^2}(\min\{|\epsilon z|,2\})^2}\leq C
  \end{align*}
   hold for all $z\in \mathbb{R}$, and estimate\EMHC 
\begin{align*}
||(\mathcal{B}_{\epsilon}^{-1})_{i2}G||_{2,2}^2\leq \int_{\mathbb{R}} \Big(\widehat{({\mathcal{B}}_\epsilon^{-1})_{i2}G}\Big)^2 dz= \int_{\mathbb{R}} \widehat{({\mathcal{B}}_\epsilon^{-1})_{i2}}^2\widehat{G}^2 dz\leq  C\int_{\mathbb{R}}\widehat{G}^2 dz\leq C||G||_2^2
\end{align*}
for all $G\in \mathsf{L}^2(\mathbb{R})$.
\end{proof}
\subsection{Further auxiliary results}\label{sect:Aux2}
\BMHC We proceed with bounding the components of the operator $\mathcal{M}_\epsilon$ from \eqref{defM}.\EMHC
\begin{Lem}[\BMHC properties of $\mathcal{M}_\epsilon$\EMHC ] \label{Lemma61}
For any $0<\epsilon\leq 1$ we have 
  \begin{align*}
||(\mathcal{M}_{\epsilon}G)_1||_2\leq C ||G||_2,\qquad
||(\mathcal{M}_{\epsilon}G)_2||_2\leq \epsilon^2C||G||_2
  \end{align*}
for \BMHC some constant $C$ independent of $\epsilon$ and all \EMHC $G\in (\mathsf{L}^2(\mathbb{R}))^2$.
\end{Lem}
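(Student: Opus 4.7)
The plan is to apply the entries of $\mathcal{M}_\epsilon$ componentwise to $G=(G_1,G_2)^T$ and control each summand of the form $\mathcal{A}_{k_m\epsilon}\bigl((\mathcal{A}_{k_m\epsilon}W_*)(\mathcal{A}_{k_m\epsilon}G_j)\bigr)$ in $\mathsf{L}^2(\mathbb{R})$. The factor $\epsilon^2$ that distinguishes the second row from the first will then immediately produce the asymmetric scaling claimed in the statement.

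The main estimate proceeds in three steps. First, by the $\mathsf{L}^2$-contractivity of $\mathcal{A}_{k_m\epsilon}$ from Lemma \ref{Prop A}(1), the outer operator satisfies $\|\mathcal{A}_{k_m\epsilon}H\|_2\le\|H\|_2$ for any $H\in\mathsf{L}^2(\mathbb{R})$. Second, applying Hölder's inequality to the pointwise product $H:=(\mathcal{A}_{k_m\epsilon}W_*)(\mathcal{A}_{k_m\epsilon}G_j)$ yields
\begin{align*}
\|H\|_2\le\|\mathcal{A}_{k_m\epsilon}W_*\|_\infty\,\|\mathcal{A}_{k_m\epsilon}G_j\|_2.
\end{align*}
Third, since $W_*$ is the bounded $\sech^2$ profile from \eqref{formelW}, Lemma \ref{Prop A}(2) gives $\|\mathcal{A}_{k_m\epsilon}W_*\|_\infty\le\|W_*\|_\infty$, a constant independent of $\epsilon$, while another application of Lemma \ref{Prop A}(1) produces $\|\mathcal{A}_{k_m\epsilon}G_j\|_2\le\|G_j\|_2\le\|G\|_2$. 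Chaining these three estimates shows that each building block is bounded in $\mathsf{L}^2$ by $C\|G\|_2$ with a constant $C$ depending only on $M$, the coupling constants $k_m$, and $\|W_*\|_\infty$, but not on $\epsilon\in(0,1]$.

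Summing the two scalar contributions that make up the first row of $\mathcal{M}_\epsilon$ with weights $\eta_{1,j}^m$ then yields $\|(\mathcal{M}_\epsilon G)_1\|_2\le C\|G\|_2$, and the identical argument applied to the second row, which carries an additional prefactor $\epsilon^2$, produces $\|(\mathcal{M}_\epsilon G)_2\|_2\le\epsilon^2 C\|G\|_2$. There is no genuine obstacle here; the only point requiring a moment of care is that all three instances of $\mathcal{A}_{k_m\epsilon}$ act at the \emph{same} length scale $k_m\epsilon$, so each factor can be absorbed by the uniform $\mathsf{L}^2\!\to\!\mathsf{L}^2$ and $\mathsf{L}^\infty\!\to\!\mathsf{L}^\infty$ contractivity statements of Lemma \ref{Prop A} without invoking any smoothing properties of the averaging operator.
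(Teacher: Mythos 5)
Your proposal is correct and follows exactly the paper's own argument: the chain $\|\mathcal{A}_{k_m\epsilon}(\mathcal{A}_{k_m\epsilon}W_*\,\mathcal{A}_{k_m\epsilon}G_j)\|_2\le\|\mathcal{A}_{k_m\epsilon}W_*\|_\infty\|\mathcal{A}_{k_m\epsilon}G_j\|_2\le\|W_*\|_\infty\|G\|_2$ via Lemma \ref{Prop A}, followed by summing the weighted terms and reading off the explicit $\epsilon^2$ prefactor in the second row of \eqref{defM}. No differences worth noting.
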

\begin{proof}
Lemma \ref{Prop A} yields
\begin{align*}
||\mathcal{A}_{k_m\epsilon}(\mathcal{A}_{k_m\epsilon}W_*\mathcal{A}_{k_m\epsilon}G)||_2&\leq ||\mathcal{A}_{k_m\epsilon}W_*\mathcal{A}_{k_m\epsilon}G||_2\leq ||\mathcal{A}_{k_m\epsilon}W_*||_{\infty}||\mathcal{A}_{k_m\epsilon}G||_2\leq ||W_*||_{\infty}||G||_2
\end{align*}
for all $G\in \mathsf{L}^2(\mathbb{R})$ and the claim follows from the definition of $\mathcal{M}_{\epsilon}$ in \eqref{defM}.
\end{proof}
\BMHC We next observe that $\mathcal{L}_{\epsilon}$ from \eqref{vorfix} is a bounded linear operator on $(\mathsf{L}_{\rm{even}}^2(\mathbb{R}))^2$ and admits the adjoint \EMHC 
\begin{align*}
\mathcal{L}_{\epsilon}^* V=\begin{pmatrix}
\mathcal{B}_{\epsilon,11}V_1+\mathcal{B}_{\epsilon,21}V_2-\sum_{m}^{M}\mathcal{A}_{k_m\epsilon}(\mathcal{A}_{k_m\epsilon}W_*)(\eta_{1,1}^m\mathcal{A}_{k_m\epsilon}V_1+\epsilon^2\eta_{2,1}^m\mathcal{A}_{k\epsilon}V_2) \\
\mathcal{B}_{\epsilon,12}V_1+\mathcal{B}_{\epsilon,22}V_2-\sum_{m}^M\mathcal{A}_{k_m\epsilon}(\mathcal{A}_{k_m\epsilon}W_*)(\eta_{1,2}^m\mathcal{A}_{k_m\epsilon}V_1+\epsilon^2\eta_{2,2}^m\mathcal{A}_{k\epsilon}V_2) 
\end{pmatrix}.
  \end{align*} 
Passing \BMHC formally \EMHC to the limit $\epsilon\to0$ in \eqref{vorfix} we find \BMHC the limit operator $\mathcal{L}_0$ with\EMHC 
\begin{align*}
(\mathcal{L}_0\phi)_1&=(\phi_1+\lambda\phi_2)-(a_1+\lambda a_2)\phi''_1-(b_2+\lambda b_1) \phi''_2-2W_*(a_3+\lambda^2 a_4+\lambda a_5)\phi_1\\
&-2W_*(\lambda b_5+\lambda^2b_3+b_4)\phi_2,\\
(\mathcal{L}_0\phi)_2&=-c_2\phi_1+(\sigma_0-c_3)\phi_2
\end{align*}   
\BMHC thanks to \eqref{deflam} and \eqref{defM}, and the corresponding adjoint $\mathcal{L}_0^*$ reads \EMHC
    \begin{align*}
(\mathcal{L}_0^*\phi)_1&=\phi_1-(a_1+\lambda a_2)\phi''_1-2W_*(a_3+\lambda^2 a_4+\lambda a_5)\phi_1-c_2\phi_2,\\
(\mathcal{L}_0^*\phi)_2&=\lambda\phi_1-2W_*(\lambda b_5+\lambda^2b_3+b_4)\phi_1-(b_2+\lambda b_1) \phi''_1+(\sigma_0-c_3)\phi_2.
  \end{align*} 
\BMHC Notice that both $\mathcal{L}_0$ and $\mathcal{L}_0^*$ are unbounded operators
and that the above formulas are only well-defined if $\phi$ is sufficiently smooth. Moreover, for the computation of $\mathcal{L}_0^*$ we supposed that $\phi$ decays sufficiently fast.\EMHC  
\par
The operators $\mathcal{L}_{\epsilon}^*$ and $\mathcal{L}_0^*$ \BMHC play a prominent \EMHC role in the proof of the invertibility of $\mathcal{L}_{\epsilon}$. The next lemma shows that $\mathcal{L}_{\epsilon}^*\phi$ converges to $\mathcal{L}_0^*\phi$ \BMHC provided that $\phi$ is sufficiently nice.\EMHC 
\begin{Lem}[\BMHC pointwise convergence of $\mathcal{L}_\epsilon$\EMHC ]\label{Lemma 11}
For any function \BMHC $\phi\in (\mathsf{W}^{4,2}(\mathbb{R}))^2$ \EMHC we have
  \begin{align*}
\mathcal{L}^*_{\epsilon}\phi\quad \xrightarrow{\;\;\epsilon\rightarrow 0\;\;} \quad \mathcal{L}^*_0\phi
  \end{align*}
strongly in $(\mathsf{L}^2(\mathbb{R}))^2$.
\end{Lem}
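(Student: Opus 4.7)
My plan is to decompose $(\mathcal{L}_\epsilon^*-\mathcal{L}_0^*)\phi$ componentwise into the contributions coming from $\mathcal{B}_\epsilon$ and from the multiplication-by-$W_*$ block inside $\mathcal{M}_\epsilon$, and to verify strong $\mathsf{L}^2$-convergence of each piece separately. The quantitative estimates in Lemma~\ref{Prop A}, especially the fourth-order expansion, do the heavy lifting, which is precisely what explains the hypothesis $\phi\in(\mathsf{W}^{4,2}(\mathbb{R}))^2$.

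For the $\mathcal{B}_\epsilon^*$-part I would start from the Fourier formulas for $\widehat{\mathcal{B}}_{\epsilon,ij}^{\mathrm{can}}$ displayed in the proof of Lemma~\ref{estB} and combine them with $\mathcal{B}_\epsilon=\mathcal{T}_\epsilon\cdot\mathcal{B}_\epsilon^{\mathrm{can}}$. The algebraic identities $\sigma_0-c_1=\lambda c_2$ and $\sigma_0-c_3=c_2/\lambda$ encoded in \eqref{defsig} and \eqref{deflam} cancel the singular constant inside the $1/\epsilon^2$-prefactor of the relevant entries, leaving for instance
\begin{equation*}
\widehat{\mathcal{B}}_{\epsilon,11}=1+\sum_{m=1}^{M}k_m^2(\alpha_{1,1}^m+\lambda\alpha_{2,1}^m)\,s_{k_m,\epsilon},
\end{equation*}
and similarly well-behaved expressions for the remaining entries. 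Applying Lemma~\ref{Prop A} twice to $\mathcal{A}_{k_m\epsilon}^2=\mathcal{A}_{k_m\epsilon}\circ\mathcal{A}_{k_m\epsilon}$ yields the quantitative bound
\begin{equation*}
\Big\|\frac{\id-\mathcal{A}_{k_m\epsilon}^2}{\epsilon^2}\phi_j+\frac{k_m^2}{12}\phi_j''\Big\|_2\leq C\epsilon^2\|\phi_j''''\|_2,
\end{equation*}
while the innocuous $O(\epsilon^2 s_{k_m,\epsilon})$ terms in $\mathcal{B}_{\epsilon,21}$, $\mathcal{B}_{\epsilon,22}$ vanish by dominated convergence thanks to the uniform bound $|\epsilon^2 s_{k_m,\epsilon}(z)|\leq 1$. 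Assembling these pieces reproduces the linear combinations of $\phi_j$, $\phi_j''$ and constant multiples of $\phi_j$ appearing in the formulas for $\mathcal{L}_0^*$.

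For the $\mathcal{M}_\epsilon^*$-contribution I would rewrite each term $\mathcal{A}_{k_m\epsilon}\bigl((\mathcal{A}_{k_m\epsilon}W_*)\mathcal{A}_{k_m\epsilon}\phi_i\bigr)-W_*\phi_i$ via the telescoping identity
\begin{equation*}
\mathcal{A}_{k_m\epsilon}\bigl((\mathcal{A}_{k_m\epsilon}W_*-W_*)\mathcal{A}_{k_m\epsilon}\phi_i\bigr)+\mathcal{A}_{k_m\epsilon}\bigl(W_*(\mathcal{A}_{k_m\epsilon}\phi_i-\phi_i)\bigr)+\bigl(\mathcal{A}_{k_m\epsilon}-\id\bigr)(W_*\phi_i),
\end{equation*}
and control each summand in $\mathsf{L}^2$ by combining the contractivity and $\mathsf{L}^\infty$-boundedness of $\mathcal{A}_{k_m\epsilon}$ with the $O(\eta^2)$-convergence estimates from Lemma~\ref{Prop A}. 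Since $W_*=\tfrac{3d_1}{2d_2}\sech^2(\tfrac{1}{2}\sqrt{d_1}\,\cdot)$ is Schwartz-like and $\phi_i\in\mathsf{W}^{4,2}(\mathbb{R})$ has the regularity required by these estimates, all Sobolev norms on the right-hand side are finite. The factor $\epsilon^2$ in front of the $\eta_{2,\cdot}^m$-terms makes those contributions vanish trivially.

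The crux is the first step: without the cancellation built into the definitions of $\sigma_0$ and $\lambda$, the $1/\epsilon^2$-prefactor in $\mathcal{B}_\epsilon^{\mathrm{can}}$ would preclude any $\mathsf{L}^2$-convergence. Once this cancellation is identified, the $\mathsf{W}^{4,2}$-regularity of $\phi$ is exactly what upgrades the formal Taylor expansion of $\widehat{\mathcal{B}}_{\epsilon,ij}$ into quantitative $\mathsf{L}^2$-convergence of rate $O(\epsilon^2)$; the multiplicative part is then a routine application of Lemma~\ref{Prop A}.
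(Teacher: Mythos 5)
Your proposal is correct and follows essentially the same route as the paper: reduce $\mathcal{L}_\epsilon^*-\mathcal{L}_0^*$ to the two building blocks $\epsilon^{-2}(\id-\mathcal{A}_{k\epsilon}^2)$ and $\mathcal{A}_{k\epsilon}\bigl((\mathcal{A}_{k\epsilon}W_*)\mathcal{A}_{k\epsilon}\cdot\bigr)$, handle the first via the fourth-order expansion of Lemma~\ref{Prop A} and the second via the same telescoping decomposition the paper uses. Your explicit verification that the $1/\epsilon^2$-constants cancel through $\sigma_0-c_1=\lambda c_2$ and $\sigma_0-c_3=c_2/\lambda$ is left implicit in the paper (it is built into the derivation of $\mathcal{B}_0$), but it is a correct and welcome clarification rather than a deviation.
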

\begin{proof}
The operator $\mathcal{L}^*_{\epsilon}$ consists of linear combinations of $\epsilon^{-2}(\id-\mathcal{A}_{k\epsilon}^2)$ and $\mathcal{A}_{k_m\epsilon}(\mathcal{A}_{k_m\epsilon}W_*)\mathcal{A}_{k_m\epsilon}$. Hence it suffices to show the \BMHC pointwise \EMHC $\mathsf{L}^2$-convergence of these operators for any test function $\phi\in \mathsf{W}^{4,2}(\mathbb{R})$. Using Lemma \ref{Prop A} we have 
 \begin{align*}
||\epsilon^{-2}(\id-\mathcal{A}_{k\epsilon}^2)\phi-(-\tfrac{1}{12}k^2 \phi'')||_2\leq C\epsilon^2||\phi''''||_2
\end{align*}
and 
\begin{align*}
||\mathcal{A}_{k\epsilon}\big((\mathcal{A}_{k\epsilon}W_*)(\mathcal{A}_{k\epsilon}\phi)\big)-&W_*\phi||_2\leq ||\mathcal{A}_{k\epsilon}\big((\mathcal{A}_{k\epsilon}W_*)(\mathcal{A}_{k\epsilon}\phi)\big)-\mathcal{A}_{k\epsilon}\big((\mathcal{A}_{k\epsilon}W_*)\phi \big)||_2+\\
&\qquad+||\mathcal{A}_{k\epsilon}\Big((\mathcal{A}_{k\epsilon}W_*)\phi \big)-\mathcal{A}_{k\epsilon}(W_*\phi)||_2+||\mathcal{A}_{k\epsilon}(W_*\phi)-W_*\phi||_2\\
&\leq ||(\mathcal{A}_{k\epsilon}W_*)(\mathcal{A}_{k\epsilon}\phi-\phi)||_2+||(\mathcal{A}_{k\epsilon}W_*-W_*)\phi||_2+||\mathcal{A}_{k\epsilon}(W_*\phi)-W_*\phi||_2\\
&\leq ||W_*||_{\infty}||\mathcal{A}_{k\epsilon}\phi-\phi||_2+||\phi||_{\infty}||\mathcal{A}_{k\epsilon}W_*-W_*||_2+\\
&+||\mathcal{A}_{k\epsilon}(W_*\phi)-W_*\phi||_2\\
&\leq Ck^2\epsilon^2( ||W_*||_{\infty}||\phi''||_2+||\phi||_{\infty}||W''_*||_2+||(W_*\phi)''||_2).
\end{align*}
The assertion follows immediately.
\end{proof}
\BMHC Another key ingredient to our asymptotic analysis is that the kernel of the limit operator $\mathcal{L}_0$ is simple. We also mention that a similar argument 
appears in all rigorous results on the KdV limit of FPU-type lattices and that the entire spectrum of $\mathcal{L}_0$ can be computed explicitly, see for instance \cite{FP99, FP04b} and the references therein.\EMHC
\begin{Lem}[\BMHC nullspace of $\mathcal{L}_0$\EMHC ]\label{Lemma 9}
Under Assumption \ref{Ass2}, the kernel of $\mathcal{L}_0$ in $(\mathsf{L}^2(\mathbb{R}))^2$ is spanned by $W'_0=(W'_*,\lambda W'_*)^T\in (\mathsf{L}_{odd}^2(\mathbb{R}))^2$, \BMHC where $W_*$ represents the KdV wave from \eqref{Ws} and \eqref{formelW}. \EMHC
\end{Lem}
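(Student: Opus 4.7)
The plan is to reduce the vector kernel problem to a scalar one via the second component, and then invoke standard ODE arguments to show uniqueness.

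\textbf{Step 1 (reduction).} Suppose $\phi=(\phi_1,\phi_2)^T\in (\mathsf{L}^2(\mathbb{R}))^2$ satisfies $\mathcal{L}_0\phi=0$. The second component of the defining formula for $\mathcal{L}_0$ is purely algebraic and reads $-c_2\phi_1+(\sigma_0-c_3)\phi_2=0$. By Assumption~\ref{Ass2} we have $c_2\neq 0$ and hence $\sigma_0\neq c_3$, so together with the definition $\lambda=c_2/(\sigma_0-c_3)$ from \eqref{deflam} we immediately infer
\begin{align*}
\phi_2=\lambda\,\phi_1\,.
\end{align*}

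\textbf{Step 2 (scalar ODE).} Inserting $\phi_2=\lambda \phi_1$ into the first component, collecting the coefficients of $\phi_1$, $\phi_1''$, and $W_*\phi_1$, and dividing by the positive constant $(a_1+\lambda a_2)+\lambda(b_2+\lambda b_1)$, one obtains, in view of the formulas \eqref{defd},
\begin{align*}
\phi_1''=\bigl(d_1-2d_2 W_*\bigr)\phi_1\,.
\end{align*}
This is precisely the linearization of the KdV profile equation \eqref{Ws} around $W_*$, as can be checked by differentiating \eqref{Ws}, which yields $W_*'''=d_1W_*'-2d_2W_*W_*'$. Hence $\phi_1=W_*'$ belongs to the solution space, and by construction $W_*'\in \mathsf{L}^2(\mathbb{R})$ (it is odd and decays exponentially according to the explicit formula \eqref{formelW}).

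\textbf{Step 3 (one-dimensionality in $\mathsf{L}^2$).} The scalar ODE $\phi_1''=(d_1-2d_2 W_*)\phi_1$ has a two-dimensional solution space. Because $W_*(\xi)\to 0$ exponentially as $|\xi|\to\infty$ (and $d_1>0$ by Assumption~\ref{Ass2}), each solution behaves asymptotically like a linear combination of $e^{\pm\sqrt{d_1}\,\xi}$ at both ends. The requirement $\phi_1\in \mathsf{L}^2(\mathbb{R})$ forces the coefficient of the growing exponential to vanish at both $\pm\infty$. Since we already know that $W_*'$ satisfies both decay conditions, it suffices to rule out a second independent $\mathsf{L}^2$-solution. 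This follows from the constancy of the Wronskian: if $\psi$ were a second linearly independent decaying solution, then $W(\psi,W_*')$ would be a non-zero constant, which contradicts the fact that the product of two decaying solutions tends to zero. Consequently, the scalar kernel is spanned by $W_*'$.

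\textbf{Step 4 (conclusion).} Combining Steps 1--3, every $\phi\in\ker\mathcal{L}_0\cap (\mathsf{L}^2(\mathbb{R}))^2$ has the form $\phi=C\,(W_*',\lambda W_*')^T=C\,W_0'$ for some $C\in\mathbb{R}$. Finally, $W_0'\in (\mathsf{L}_{\rm odd}^2(\mathbb{R}))^2$ because $W_*$ is even.

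The only delicate point is the uniqueness assertion in Step 3, but this is a classical Wronskian argument for a short-range Schrödinger-type perturbation of the positive constant coefficient $d_1$. Everything else is a direct algebraic reduction combined with the already-established identification of $W_*$ as the even homoclinic of \eqref{Ws}.
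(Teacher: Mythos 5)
Your proposal is correct and follows essentially the same route as the paper: reduce via the algebraic second component to $\phi_2=\lambda\phi_1$, identify the resulting scalar equation as the linearization $\phi_1''=(d_1-2d_2W_*)\phi_1$ of \eqref{Ws} solved by $W_*'$, and conclude one-dimensionality by the classical Wronskian argument (which the paper simply cites from \cite[Lemma 3.1]{HML15} while you spell it out).
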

\begin{proof}
\BMHC Let $(\phi_1,\phi_2)$ be in the kernel of $\mathcal{L}_0$. The second component of $\mathcal{L}_0\phi=0$ gives $\phi_2=\lambda\phi_1$, and substituting this back into the first one, we obtain \EMHC
\begin{align}\label{linear}
\phi''_1=d_1\phi_1-2d_2W_*\phi_1,
  \end{align}   
where $d_1$, $d_2$ are given by \eqref{defd}. Equation \eqref{linear} can be viewed as the linearization of \eqref{BW} and is solved by $\phi_1=W'_*\in \mathsf{L}_{odd}^2(\mathbb{R})$. 
\BMHC The uniqueness of $\phi_1$ (and hence of $\phi_2$) up to normalization follows from a classical Wronski argument for linear and planar ODEs, see \cite[Lemma 3.1]{HML15} for more details.\EMHC
\end{proof}
The next lemma shows \BMHC that \EMHC $R_{\epsilon}[W_{0}]$ and $\mathcal{P}_{\epsilon}[W_{0}]$ are uniformly bounded for small $\epsilon$. 
\begin{Lem}[\BMHC residuals of the KdV wave\EMHC]\label{R+P}
There exists a constant $C$ independent of $\epsilon$ such that 
\begin{align*}
||R_{\epsilon}[W_{0}]\BMHC ||_2+||\EMHC \mathcal{P}_{\epsilon}[W_{0}]||_2\leq C,
\end{align*}
for all $0<\epsilon\leq 1$.
\end{Lem}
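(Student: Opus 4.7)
The strategy is to exploit the fact that the limit profile $W_0=(W_*,\lambda W_*)^T$ is constructed so that the leading-order equation $\mathcal{B}_0 W_0=\mathcal{Q}_0[W_0]$ holds, see \eqref{Ws1} and \eqref{Ws}. Subtracting this null identity from the defining expression for $R_\epsilon$ yields
\[
\epsilon^2 R_\epsilon[W_0]=\bigl(\mathcal{Q}_\epsilon[W_0]-\mathcal{Q}_0[W_0]\bigr)-\bigl(\mathcal{B}_\epsilon W_0-\mathcal{B}_0 W_0\bigr),
\]
so the bound $\|R_\epsilon[W_0]\|_2\leq C$ reduces to showing that each difference on the right hand side is of order $\epsilon^2$ in $\mathsf{L}^2$. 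Since $W_*$ is the explicit $\mathrm{sech}^2$-profile from \eqref{formelW}, it belongs to $\mathsf{W}^{4,p}(\mathbb{R})$ for every $p\in[1,\infty]$, so the quantitative Taylor estimates of Lemma~\ref{Prop A}(7) are available without restriction.

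For $\mathcal{B}_\epsilon W_0-\mathcal{B}_0 W_0$ I would insert the sharp expansion $\mathcal{A}_{k_m\epsilon}^2 U=U+\tfrac{(k_m\epsilon)^2}{12}U''+O\bigl(\epsilon^4\|U''''\|_2\bigr)$ into each entry of $\mathcal{B}_\epsilon^{\text{can}}$. Using $\sigma_\epsilon=\sigma_0+\epsilon^2$, the formally $\epsilon^{-2}$-singular portion of $\mathcal{B}_\epsilon^{\text{can}}W_0$ is $\mathcal{B}_\epsilon^{\text{sing}}W_0$, and after premultiplication by $\mathcal{T}_\epsilon$ this singular contribution vanishes exactly: the identities \eqref{eqsigma} and \eqref{deflam} ensure that the first row $(1,\lambda)$ annihilates $\mathcal{B}_0^{\text{sing}}W_0$, while the second row carries an overall factor $\epsilon^2$. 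What remains after the cancellation is $\mathcal{B}_0 W_0$ plus a genuine $O(\epsilon^2)$ remainder in $\mathsf{L}^2$, where Assumption~\ref{Ass sym} is precisely what aligns the surviving second-order terms with those in \eqref{Eqn:LeadingOrder1}. For the quadratic difference $\mathcal{Q}_\epsilon[W_0]-\mathcal{Q}_0[W_0]$, each summand in \eqref{Eqn:DefQeps} is of the form $\mathcal{A}_{k_m\epsilon}\bigl((\mathcal{A}_{k_m\epsilon}W_{0,i})(\mathcal{A}_{k_m\epsilon}W_{0,j})\bigr)$; applying the triangle inequality together with $\|\mathcal{A}_{k_m\epsilon}U-U\|_2\leq C\epsilon^2\|U''\|_2$ and $\|\mathcal{A}_{k_m\epsilon}U\|_\infty\leq\|U\|_\infty$ from Lemma~\ref{Prop A} three times yields the desired $O(\epsilon^2)$ $\mathsf{L}^2$-bound, and the extra $\epsilon^2$ in the second component of $\mathcal{Q}_\epsilon$ only improves the estimate there.

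The bound $\|\mathcal{P}_\epsilon[W_0]\|_2\leq C$ is more direct. Assumption~\ref{Ass1} with $y_1=y_2=0$, together with $\Psi_i^m(0,0)=0$, yields the pointwise cubic growth $|\Psi_i^m(x_1,x_2)|\leq \gamma_i^m(x_1^2+x_2^2)(|x_1|+|x_2|)$. Evaluating at $x_i=\epsilon^2 k_m\mathcal{A}_{k_m\epsilon}W_{0,i}$ produces a pointwise upper bound of size $\epsilon^6$ multiplied by a cubic polynomial in $|\mathcal{A}_{k_m\epsilon}W_{0,i}|$, which cancels the $\epsilon^{-6}$ prefactor in the first component of $\mathcal{P}_\epsilon[W_0]$ and leaves a surplus $\epsilon^2$ in the second. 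To pass to $\mathsf{L}^2$ I would split the cubic factor as one $\mathsf{L}^2$-factor times two $\mathsf{L}^\infty$-factors and invoke parts (1) and (2) of Lemma~\ref{Prop A} to bound $\|\mathcal{A}_{k_m\epsilon}W_{0,i}\|_2\leq\|W_{0,i}\|_2$ and $\|\mathcal{A}_{k_m\epsilon}W_{0,i}\|_\infty\leq\|W_{0,i}\|_\infty$, both of which are finite by virtue of \eqref{formelW}.

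The principal hurdle in executing this plan is the delicate algebraic bookkeeping required for the $\epsilon^{-2}$-cancellation in $\mathcal{B}_\epsilon W_0-\mathcal{B}_0 W_0$: it rests simultaneously on the symmetry built into Assumption~\ref{Ass sym}, on the quadratic equation \eqref{eqsigma} that defines $\sigma_0$, and on the deliberate shape of $\mathcal{T}_\epsilon$ in \eqref{deflam}. Once these cancellations are carefully tracked, the residual behaves purely as $O(\epsilon^2)$ in $\mathsf{L}^2$ with constants depending only on $\|W_*\|_{4,2}$ and $\|W_*\|_\infty$, and the stated bound follows.
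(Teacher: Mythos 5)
Your proposal is correct and follows essentially the same route as the paper: the paper's (very terse) proof likewise reduces everything to the regularity and decay of $W_*$ from \eqref{formelW}, the Taylor estimates for $\mathcal{A}_{k_m\epsilon}$ in Lemma \ref{Prop A}, and the argument pattern of Lemma \ref{Lemma 11}, with the $\epsilon^{-2}$ in $R_\epsilon[W_0]$ absorbed exactly because the leading-order identity $\mathcal{B}_0W_0=\mathcal{Q}_0[W_0]$ holds and the first row $(1,\lambda)$ of $\mathcal{T}_\epsilon$ annihilates the singular part. Your write-up simply makes explicit the cancellations the paper leaves implicit.
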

\begin{proof}
\BMHC
Because $W_*$ and all its derivatives belong to $\mathsf{L}^2(\mathbb{R})\cap \mathsf{L}^{\infty}(\mathbb{R})$ according to \eqref{formelW}, the assertions 
can be verified by means of Lemma \ref{Prop A} and arguing as in the proof of Lemma \ref{Lemma 11}.\EMHC
\end{proof}
\subsection{Inverting the operator \texorpdfstring{$\mathcal{L}_{\epsilon}$}{L}}\label{sect:invL}
\BMHC Since $\mathcal{L}_0$ has a nontrivial kernel by Lemma \ref{Lemma 9}, we cannot expect to bound $\mathcal{L}_{\epsilon}^{-1}$ uniformly on $(\mathsf{L}^2(\mathbb{R}))^2$. We can, however, do so in the space of even functions. \EMHC
 \begin{Lem}[\BMHC parity properties\EMHC]
The subspace $(\mathsf{L}_{\rm{even}}^2(\mathbb{R}))^2$ is invariant in $(\mathsf{L}^2(\mathbb{R}))^2$ under the linear mapping $\mathcal{L}_{\epsilon}$.
\end{Lem}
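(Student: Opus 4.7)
The plan is to verify parity preservation component by component, leveraging the fact that $\mathcal{A}_\eta$ respects the even-odd parity (item 3 of Lemma \ref{Prop A}) together with the explicit evenness of $W_*$ visible in \eqref{formelW}. Since $\mathcal{L}_\epsilon = \mathcal{B}_\epsilon - \mathcal{M}_\epsilon$, it suffices to show that both $\mathcal{B}_\epsilon$ and $\mathcal{M}_\epsilon$ map $(\mathsf{L}_{\rm{even}}^2(\mathbb{R}))^2$ into itself.

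For $\mathcal{B}_\epsilon = \mathcal{T}_\epsilon \cdot \mathcal{B}_\epsilon^{\text{can}}$, the matrix $\mathcal{T}_\epsilon$ from \eqref{deflam} has constant scalar entries, so it acts linearly on the components and trivially preserves the parity of each component. The entries of $\mathcal{B}_\epsilon^{\text{can}}$ in \eqref{Bcan} are scalar linear combinations of the identity and the operators $\mathcal{A}_{k_m\epsilon}^2$. Since $\mathcal{A}_{k_m\epsilon}$ maps even functions to even functions, the same is true for $\mathcal{A}_{k_m\epsilon}^2$ and any finite scalar linear combination thereof; applying such a matrix of operators to a vector $V \in (\mathsf{L}_{\rm{even}}^2(\mathbb{R}))^2$ yields a vector each of whose components is even.

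For $\mathcal{M}_\epsilon$, each entry in \eqref{defM} is a scalar multiple of the composite operator $\mathcal{A}_{k_m\epsilon}\bigl((\mathcal{A}_{k_m\epsilon}W_*)\,\mathcal{A}_{k_m\epsilon}\,\cdot\,\bigr)$. The explicit formula \eqref{formelW} shows that $W_*$ is even, hence $\mathcal{A}_{k_m\epsilon}W_*$ is even as well. If $V_i\in \mathsf{L}_{\rm{even}}^2(\mathbb{R})$, then $\mathcal{A}_{k_m\epsilon}V_i$ is even, so the pointwise product $(\mathcal{A}_{k_m\epsilon}W_*)(\mathcal{A}_{k_m\epsilon}V_i)$ is a product of two even functions and therefore even; a final application of $\mathcal{A}_{k_m\epsilon}$ again preserves evenness. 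Summing over $m$ and assembling the two components yields $\mathcal{M}_\epsilon V \in (\mathsf{L}_{\rm{even}}^2(\mathbb{R}))^2$.

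Combining the two observations, $(\mathcal{L}_\epsilon V)_i = (\mathcal{B}_\epsilon V)_i - (\mathcal{M}_\epsilon V)_i$ is the difference of two even $\mathsf{L}^2$-functions and thus belongs to $\mathsf{L}_{\rm{even}}^2(\mathbb{R})$ for $i=1,2$. No genuine obstacle arises; the argument is pure bookkeeping that relies solely on the parity preservation property of $\mathcal{A}_\eta$, the explicit evenness of $W_*$, and the elementary fact that finite sums and pointwise products of even functions remain even.
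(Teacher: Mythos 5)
Your proposal is correct and follows essentially the same route as the paper: the paper's proof likewise observes that all components of $\mathcal{L}_{\epsilon}$ are scalar linear combinations of $\mathcal{A}_{k_m\epsilon}^2$ and $\mathcal{A}_{k_m\epsilon}\bigl((\mathcal{A}_{k_m\epsilon}W_*)\,\mathcal{A}_{k_m\epsilon}\,\cdot\,\bigr)$, then invokes the parity preservation of $\mathcal{A}_{k_m\epsilon}$ from Lemma \ref{Prop A} and the evenness of $W_*$. Your version simply spells out the same bookkeeping in more detail.
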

\begin{proof}
According to the definitions \BMHC in \eqref{Eqn:DefBEps}, \eqref{vorfix}, and \eqref{defM}, \EMHC all components of $\mathcal{L}_{\epsilon}$ are linearly spanned by $\mathcal{A}_{k_m\epsilon}^2$ and $\big(\mathcal{A}_{k_m\epsilon}(\mathcal{A}_{k_m\epsilon}W_*)\big)\mathcal{A}_{k_m\epsilon}$. We know $\mathcal{A}_{k_m\epsilon}$ respects the even-odd parity and $W_*$ is an even function. The assertion follows.
\end{proof}
Now we are in a position to state and prove the main \BC asymptotic result. \EC
\begin{Thm}[invertibility of $\mathcal{L}_{\epsilon}$]\label{inverL}
There exists a constant $\epsilon_*>0$ such that for any $\epsilon\in (0,\epsilon_*)$ the operator $\mathcal{L}_{\epsilon}$ is invertible on $\mathsf{L}_{\rm{even}}^2(\mathbb{R})$. More precisely, there exists a constant \BMHC $C$ \EMHC which does not depend on $\epsilon$ such that
  \begin{align*}
||\mathcal{L}_{\epsilon}^{-1}G||_2\leq \BMHC C \EMHC ||G||_2
  \end{align*}
 for all $\epsilon\in (0,\epsilon_*)$ and all $G\in (\mathsf{L}_{\rm{even}}^2(\mathbb{R}))^2$.
\end{Thm}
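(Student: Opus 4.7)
The plan is to argue by contradiction along the classical blueprint for KdV-limit problems. Assume there exist sequences $\epsilon_n\downarrow 0$ and $V_n\in (\mathsf{L}_{\rm{even}}^2(\mathbb{R}))^2$ with $\|V_n\|_2=1$ and $G_n:=\mathcal{L}_{\epsilon_n}V_n\to 0$ strongly in $(\mathsf{L}^2(\mathbb{R}))^2$. Writing \eqref{vorfix} as
\begin{equation*}
V_n=\mathcal{B}_{\epsilon_n}^{-1}\bigl(G_n+\mathcal{M}_{\epsilon_n}V_n\bigr)
\end{equation*}
and combining the component-wise estimates of Lemma \ref{Lemma 6} with the bounds of Lemma \ref{Lemma61}, I would first extract the following a priori information: $\Pi_{\epsilon_n}V_{n,1}$ is bounded in $\mathsf{W}^{2,2}(\mathbb{R})$, the complement $(\id-\Pi_{\epsilon_n})V_{n,1}$ is of order $\epsilon_n^2$ in $\mathsf{L}^2(\mathbb{R})$, and $V_{n,2}$ stays bounded in $\mathsf{L}^2(\mathbb{R})$. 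Passing to a subsequence and invoking Rellich's compact embedding then produces a weak limit $V_n\rightharpoonup V_*$ in $(\mathsf{L}^2(\mathbb{R}))^2$ together with strong convergence $V_{n,1}\to V_{*,1}$ in $\mathsf{L}^2_{\mathrm{loc}}(\mathbb{R})$.

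The weak limit is identified by duality. For any test function $\phi\in (C_c^\infty(\mathbb{R}))^2$ one has $\langle V_n,\mathcal{L}_{\epsilon_n}^*\phi\rangle=\langle G_n,\phi\rangle\to 0$, while Lemma \ref{Lemma 11} yields $\mathcal{L}_{\epsilon_n}^*\phi\to\mathcal{L}_0^*\phi$ strongly in $(\mathsf{L}^2(\mathbb{R}))^2$, so $\langle V_*,\mathcal{L}_0^*\phi\rangle=0$ for every such $\phi$. Consequently $\mathcal{L}_0 V_*=0$ distributionally; since $\mathcal{L}_0$ amounts to a planar second-order ODE coupled to an algebraic relation (compare \eqref{Ws1}--\eqref{Ws}), the limit $V_*$ is automatically smooth, and Lemma \ref{Lemma 9} forces $V_*\in\mathrm{span}\{W'_0\}$. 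Since $W'_0=(W'_*,\lambda W'_*)^T$ is odd whereas $V_*$ is even as a weak limit of even functions, we conclude $V_*=0$.

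The main obstacle is to upgrade the weak convergence to strong $\mathsf{L}^2$-convergence in order to contradict $\|V_n\|_2=1$. All entries of $\mathcal{M}_{\epsilon_n}$, see \eqref{defM}, involve multiplication by $\mathcal{A}_{k_m\epsilon_n}W_*$, which inherits the exponential decay and the uniform $\mathsf{L}^\infty$-bound from the explicit profile \eqref{formelW}. Together with $V_{n,1}\to 0$ in $\mathsf{L}^2_{\mathrm{loc}}(\mathbb{R})$ and a standard tightness estimate on the tails via this decay, one obtains $\mathcal{A}_{k_m\epsilon_n}\bigl((\mathcal{A}_{k_m\epsilon_n}W_*)(\mathcal{A}_{k_m\epsilon_n}V_{n,1})\bigr)\to 0$ strongly in $\mathsf{L}^2(\mathbb{R})$. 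The contribution of the merely $\mathsf{L}^2$-bounded component $V_{n,2}$ to $(\mathcal{M}_{\epsilon_n}V_n)_1$ requires more care and is handled by solving the second row of $\mathcal{L}_{\epsilon_n}V_n=G_n$ for $V_{n,2}$ in terms of $V_{n,1}$ up to an $O(\epsilon_n^2)$-correction, which transfers the local $\mathsf{W}^{2,2}$-regularity of $\Pi_{\epsilon_n}V_{n,1}$ to the relevant term. The second component $(\mathcal{M}_{\epsilon_n}V_n)_2$ carries an explicit $\epsilon_n^2$ prefactor and hence vanishes in $\mathsf{L}^2(\mathbb{R})$ for free. Thus $\mathcal{M}_{\epsilon_n}V_n\to 0$ strongly, the uniform $\mathsf{L}^2$-bound on $\mathcal{B}_{\epsilon_n}^{-1}$ coming from Lemma \ref{Lemma 6} gives $\|V_n\|_2\to 0$, and this contradicts $\|V_n\|_2=1$. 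The resulting uniform lower bound $\|\mathcal{L}_\epsilon V\|_2\ge c\|V\|_2$ provides injectivity with closed range on $(\mathsf{L}_{\rm{even}}^2(\mathbb{R}))^2$ for all small $\epsilon$; surjectivity, and thus the claimed uniform invertibility, follows by writing $\mathcal{L}_\epsilon=\mathcal{B}_\epsilon(\id-\mathcal{B}_\epsilon^{-1}\mathcal{M}_\epsilon)$ and applying the Fredholm alternative to the compact perturbation of the identity $\id-\mathcal{B}_\epsilon^{-1}\mathcal{M}_\epsilon$, whose compactness stems from the smoothing of $\mathcal{B}_\epsilon^{-1}$ combined with the exponential decay of the weight $\mathcal{A}_{k_m\epsilon}W_*$.
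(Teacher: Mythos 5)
Your proposal is correct and follows essentially the same contradiction argument as the paper: identification of the weak limit as an even element of $\ker\mathcal{L}_0$ via Lemma \ref{Lemma 11} and Lemma \ref{Lemma 9}, then an upgrade to strong convergence by combining the frequency splitting of Lemma \ref{Lemma 6} with Rellich's theorem on bounded intervals and the decay of $W_*$ on the tails. The only cosmetic deviations are that the paper fixes the spatial cutoff $K$ once to reach $\limsup_n\|V_n\|_2\le\tfrac12$ instead of your $K$--$n$ diagonal limit, treats $V_{n,2}$ by the same $(\mathcal{B}_{\epsilon}^{-1})_{21}$-estimate rather than by solving the second row, and obtains surjectivity from the symmetry of $\mathcal{T}_{\epsilon}^{-1}\circ\mathcal{L}_{\epsilon}$ rather than from a Fredholm argument; note also that $\Pi_{\epsilon_n}V_{n,1}$ is, strictly speaking, only a sum of a $\mathsf{W}^{2,2}$-bounded part and an $\mathsf{L}^2$-null part, which still suffices for your compactness step.
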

\begin{proof}
\underline{Preliminaries}: We will show that there is a constant $c>0$ such that
  \begin{align}
\label{ineq}||\mathcal{L}_{\epsilon}V||_2\geq c||V||_2
  \end{align}
for all $V\in (\mathsf{L}_{\rm{even}}^2(\mathbb{R}))^2$ and all sufficiently small $\epsilon$. This implies that $\mathcal{L}_{\epsilon}$ is injective and has closed image. The same holds for the operator $\mathcal{T}_{\epsilon}^{-1}\circ \mathcal{L}_{\epsilon}$, \BMHC which is symmetric by Assumption \ref{Ass sym}, and we conclude that both $\mathcal{T}_{\epsilon}^{-1}\circ \mathcal{L}_{\epsilon}$ and $\mathcal{L}_{\epsilon}$ are surjective. Moreover,
\eqref{ineq} implies that the inverse of  $\mathcal{L}_{\epsilon}$ is a continuous operator.\EMHC
\par
\underline{Antithesis}: We suppose that \BMHC a constant $c$ as in \eqref{ineq} \EMHC does not exist. Then \BMHC we find \EMHC a sequence $(\epsilon_n)_{n\in \mathbb{N}}$ with $\lim_{n\rightarrow \infty} \epsilon_n=0$ as well as sequences $(V_n)_{n\in \mathbb{N}}$ and $(G_n)_{n\in \mathbb{N}}\subset (\mathsf{L}_{\rm{even}}^2(\mathbb{R}))^2$ such that
  \begin{align}
\label{inverL.Peqn8}
\mathcal{L}_{\epsilon_n}V_n=G_n,\qquad ||V_n||_2=1,\qquad ||G_n||_2\;\;\xrightarrow{\;\;n\rightarrow \infty\;\;}\;\; 0.
  \end{align}
In view of the invertibility of $\mathcal{B}_{\epsilon}$ we have 
 \begin{align}
\label{inverL.Peqn4}
V_n=\mathcal{B}_{\epsilon_n}^{-1}(\mathcal{M}_{\epsilon_n}V_n+G_n)=\begin{pmatrix}
(\mathcal{B}_{\epsilon_n}^{-1})_{11}(\mathcal{M}_{\epsilon_n}V_n+G_n)_1+(\mathcal{B}_{\epsilon_n}^{-1})_{12}(\mathcal{M}_{\epsilon_n}V_n+G_n)_2\\
(\mathcal{B}_{\epsilon_n}^{-1})_{21}(\mathcal{M}_{\epsilon_n}V_n+G_n)_1+(\mathcal{B}_{\epsilon_n}^{-1})_{22}(\mathcal{M}_{\epsilon_n}V_n+G_n)_2
\end{pmatrix},
\end{align}
where $(\mathcal{M}_{\epsilon_n}V_n+G_n)_1$, $(\mathcal{M}_{\epsilon_n}V_n+G_n)_2$ 
\BMHC denote the \EMHC components of $\mathcal{M}_{\epsilon_n}V_n+G_n$. Lemma \ref{Lemma61} gives
\begin{align}
\label{inverL.Peqn2a}
||(\mathcal{M}_{\epsilon_n}V_n+G_n)_1||_2&\leq C||V_n||_2+C||G_n||_2\leq C
\end{align}
and 
\begin{align}
\label{inverL.Peqn2b}
||(\mathcal{M}_{\epsilon_n}V_n+G_n)_2||_2&\leq C\epsilon^2||V_n||_2+||G_n||_2=C\epsilon_n^2+C||G_n||_2,
\end{align}
\BMHC 
where the right hand side in \eqref{inverL.Peqn2b} vanishes as $n\to\infty$.\EMHC 
\par
 \underline{Weak convergence to 0}: Since the sequence $(V_n)_{n\in \mathbb{N}}$ is bounded, there exists a $V_{\infty}\in (\mathsf{L}_{\rm{even}}^2(\mathbb{R}))^2$ such that 
\begin{align*}
V_n\;\;\xrightharpoonup{\;\;n\rightarrow \infty\;\;} \;\; V_{\infty} \quad \text{weakly in}  \quad (\mathsf{L}_{\rm{even}}^2(\mathbb{R}))^2
\end{align*}
\BMHC for a not relabelled subsequence. \EMHC
 Due to Lemma \ref{Lemma 11} we have
\begin{align}
\label{inverL.Peqn3}
\langle V_{\infty},\mathcal{L}_0^* \phi \rangle=\lim_{n\rightarrow \infty} \langle V_{n},\mathcal{L}_{\epsilon_n}^* \phi \rangle=\lim_{n\rightarrow \infty}\langle \mathcal{L}_{\epsilon_n}V_{n}, \phi \rangle=\lim_{n\rightarrow \infty}\langle G_n,\phi \rangle=0,
\end{align}
for \BMHC any test function $\phi$ that is sufficiently smooth  and localized. \EMHC  Let  $\phi_1$, $V_{\infty,1}$ denote the first components of $\phi$, $V_{\infty}$ and $\phi_2$, $V_{\infty,2}$ the \BMHC second ones. \EMHC Putting $\phi_1=0$ we obtain 
\begin{align}
\label{inverL.Peqn1}
\langle-c_2V_{\infty,1}+(\sigma_0-c_3)V_{\infty,2}, \phi_2\rangle=0,
\end{align}
for any $\phi_2$, and thanks to \BMHC \eqref{deflam} we get \EMHC
\begin{align*}
V_{\infty,2}=\frac{c_2}{\sigma_0-c_3}V_{\infty,1}=\lambda V_{\infty,1}.
\end{align*}
\EMHC Substituting this back into the equation $\langle V_{\infty},\mathcal{L}_0^* \phi\rangle=0$ and letting $\phi_2=0$ we obtain
\begin{align*}
\Big\langle V_{\infty,1},\  &(1+\lambda^2)\phi_1-\Big((a_1+\lambda a_2)+\lambda(b_2+\lambda b_1)\Big) \phi''_1\\
&-2W_*\Big((a_3+\lambda^2 a_4+\lambda a_5)+\lambda(\lambda b_5+\lambda^2b_3+b_4)\Big)\phi_1\Big\rangle=0,
  \end{align*}   
for any $\phi_1$, \BMHC and this gives \EMHC
\begin{align*}
\big|\langle V_{\infty,1},\phi''_1\rangle\big|\leq C||\phi_1||_2.
  \end{align*}   
\BMHC We conclude that \EMHC $V_{\infty,1}$ belongs to $\mathsf{W}^{2, 2}(\mathbb{R})$, and \eqref{inverL.Peqn1} ensures $V_{\infty}\in(\mathsf{W}^{2, 2}(\mathbb{R}))^2$. 
\BMHC We can hence apply $\mathcal{L}_0$ to $V_\infty$  and conclude in view of \eqref{inverL.Peqn3} that \EMHC
\begin{align*}
\langle \mathcal{L}_0 V_{\infty}, \phi\rangle=\langle V_{\infty},\mathcal{L}_0^* \phi\rangle=0
\end{align*}
\BMHC holds for any sufficiently nice test function $\phi$. In particular, the even function \EMHC  $V_{\infty}$ is contained in the kernel of $\mathcal{L}_0$ and \EMHC
\begin{align*}
V_{\infty}=0
\end{align*}
\BMHC is a direct consequence of Lemma \ref{Lemma 9}.\EMHC 
\par
 \underline{Further notations}: \BMHC In what follows we consider a constant $0<K<\infty$, denote by $\chi_K$ the characteristic function of the interval $I_K:=[-K,+K]$, and set
 \begin{align*}
d_K:=\sup_{|\xi|\geq K-k_{\max}}W_*(\xi),
\end{align*}
where $k_{\max} = \max_{m=1..M} k_m$. Moreover, in \EMHC order to estimate $V_n$ we break it up \BMHC via \EMHC
 \begin{align}
\label{inverL.Peqn5}
V_n=V_n^{(1)}+V_n^{(2)}+V_n^{(3)}
\end{align}
\BMHC with \EMHC
 \begin{align*}
V_n^{(1)}:=
\begin{pmatrix}
\chi_K\Pi_{\epsilon_n}V_{n,1}\\
\chi_K\Pi_{\epsilon_n}V_{n,2}
\end{pmatrix},
\qquad V_n^{(2)}:=
\begin{pmatrix}
(\id-\chi_K)\Pi_{\epsilon_n}V_{n,1}\\
(\id-\chi_K)\Pi_{\epsilon_n}V_{n,2}
\end{pmatrix},
\qquad V_n^{(3)}:=
\begin{pmatrix}
(\id-\Pi_{\epsilon_n})V_{n,1}\\
(\id-\Pi_{\epsilon_n})V_{n,2}
\end{pmatrix},
\end{align*}
\BMHC where $V_{n,1}$ and $V_{n,2}$ denote the components of $V_n$. Notice that
\begin{align*}
||V_n^{(j)}||_2\leq || V_n||_2\leq 1
\end{align*}
holds by construction for all $n$ and $j=1,2,3$.
\EMHC
\par
\BMHC  \underline{Choice of $K$}:  Since $V_n^{(2)}$ is supported in  $\mathbb{R}\setminus I_K$, the properties of the convolution operators $\mathcal{A}_{k_m\epsilon_n}$ imply 
\begin{align*}
||\mathcal{A}_{k_m\epsilon_n}\big(\mathcal{A}_{k_m\epsilon_n}W_*\mathcal{A}_{k_m\epsilon_n}
V_{n,i}^{(2)}\big)||_2&\leq ||\mathcal{A}_{k_m\epsilon_n}W_*\mathcal{A}_{k_m\epsilon_n}V_{n,i}^{(2)}||_2
\\&\leq
\Big(\sup_{|\xi|\geq K- k_{\max}/2} \big|(\mathcal{A}_{k_m\epsilon_n}W_*)(\xi)\big|\Big)||\mathcal{A}_{k_m\epsilon_n}V_{n,i}^{(2)}||_2\\&
\leq d_K||V_n^{(2)}||_2\leq d_K
\end{align*}
for $i=1,2$, $m=1..M$ and all $0<\epsilon_n\leq1$, and exploiting \eqref{defM} and Lemma \ref{Lemma 6} we obtain
\begin{align*}
||\mathcal{B}_{\epsilon_n}^{-1}\mathcal M_{\epsilon_n} V_n^{(2)}||_2 \leq C d_K
\end{align*}
for some constant $C$ independent of $K$. Consequently, choosing $K$ sufficiently large we can guarantee that
\begin{align}
\label{inverL.Peqn7}
||\mathcal{B}_{\epsilon_n}^{-1}\mathcal M_{\epsilon_n} V_n^{(2)}||_2 \leq \tfrac12
\end{align}
holds for all $n$.\EMHC
\par
\underline{Strong convergence of $V_n^{(1)}$ and $V_n^{(3)}$}:  \BMHC Lemma \ref{Lemma 6} combined with \eqref{inverL.Peqn4} yields
 \begin{align*}
||V_n^{(3)}||_2&\leq C\epsilon_n^2 ||(\mathcal{M}_{\epsilon_n}V_n+G_n)_1||_2+
||(\mathcal{M}_{\epsilon_n}V_n+G_n)_2||_2,
\end{align*}
so \eqref{inverL.Peqn2a} and \eqref{inverL.Peqn2b} provide
\begin{align}
\label{inverL.Peqn6a}
V_n^{(3)}\;\;\xrightarrow{\;\;n\rightarrow \infty\;\;}\;\; 0 \quad \text{strongly in} \quad (\mathsf{L}^2(\mathbb{R}))^2.
\end{align}
 To decompose  $V_n^{(1)}$ we define
 \begin{align*}
U_n^{(1)}:=
\begin{pmatrix}
\chi_K\Pi_{\epsilon_n}(\mathcal{B}_{\epsilon_n}^{-1})_{11}(\mathcal{M}_{\epsilon_n}V_n+G_n)_1\\
\chi_K\Pi_{\epsilon_n}(\mathcal{B}_{\epsilon_n}^{-1})_{21}(\mathcal{M}_{\epsilon_n}V_n+G_n)_1
\end{pmatrix},\qquad
\ U_n^{(2)}:=\begin{pmatrix}
\chi_K\Pi_{\epsilon_n}(\mathcal{B}_{\epsilon_n}^{-1})_{12}(\mathcal{M}_{\epsilon_n}V_n+G_n)_2\\
\chi_K\Pi_{\epsilon_n}(\mathcal{B}_{\epsilon_n}^{-1})_{22}(\mathcal{M}_{\epsilon_n}V_n+G_n)_2
\end{pmatrix}
\end{align*}
such that
 \begin{align*}
V_n^{(1)}=U_n^{(1)}+U_n^{(2)}.
\end{align*}
\BMHC By contruction, both $U_n^{(1)}$ and $U_n^{(2)}$ are supported in $I_K$, and from Lemma \ref{Lemma 6} we deduce the estimates
 \begin{align*}
||U_n^{(1)}||_{2,2,I_K}&\leq ||\Pi_{\epsilon_n}(\mathcal{B}_{\epsilon_n}^{-1})_{11}(\mathcal{M}_{\epsilon_n}V_n+G_n)_1||_{2,2}+||\Pi_{\epsilon_n}(\mathcal{B}_{\epsilon_n}^{-1})_{21}(\mathcal{M}_{\epsilon_n}V_n+G_n)_1||_{2,2}\leq C
\end{align*}
and 
 \begin{align*}
||U_n^{(2)}||_{2,I_K}&\leq ||\Pi_{\epsilon_n}(\mathcal{B}_{\epsilon_n}^{-1})_{12}(\mathcal{M}_{\epsilon_n}V_n+G_n)_2||_2+||\Pi_{\epsilon_n}(\mathcal{B}_{\epsilon_n}^{-1})_{22}(\mathcal{M}_{\epsilon_n}V_n+G_n)_2||_2\leq C(\epsilon_n^2+||G_n||_2)
\end{align*}
thanks to \eqref{inverL.Peqn2a} and \eqref{inverL.Peqn2b}, where the latter inequality shows that
\begin{align*}
U_n^{(2)}\;\;\xrightarrow{\;\;n\rightarrow \infty\;\;}\;\; 0 \quad \text{strongly in} \quad (\mathsf{L}^2(I_K))^2.
\end{align*}
\BMHC Moreover, since $V_n^{(2)}$ is supported in $\mathbb{R}\setminus I_K$ we have 
\begin{align*}
V_n^{(2)}\;\;\xrightarrow{\;\;n\rightarrow \infty\;\;}\;\; 0 \quad \text{strongly in} \quad (\mathsf{L}^2(I_K))^2
\end{align*}
and our results derived so far guarantee that
 \begin{align*}
V_n\xrightharpoonup{n\rightarrow \infty} 0 \quad \text{weakly in} \quad (\mathsf{L}^2(I_K))^2,\qquad\qquad
V_n^{(3)}\;\;\xrightarrow{\;\;n\rightarrow \infty\;\;}\;\; \quad \text{strongly in} \quad (\mathsf{L}^2(I_K))^2.
\end{align*}
Combing this with 
\begin{align*}
V_n=U_n^{(1)}+U_n^{(2)}+V_n^{(2)}+V_n^{(3)}
\end{align*}
and using that $\mathsf{W}^{2,2}(I_K)$ is compactly embedded into $\mathsf{L}^2(I_K)$, we get
\begin{align*}
U_n^{(1)}\;\;\xrightarrow{\;\;n\rightarrow \infty\;\;}\;\; 0 \quad \text{strongly in} \quad (\mathsf{L}^2(I_K))^2
\end{align*}
and hence
\begin{align*}
V_n^{(1)}\;\;\xrightarrow{\;\;n\rightarrow \infty\;\;}\;\; 0 \quad \text{strongly in} \quad (\mathsf{L}^2(I_K))^2.
\end{align*}
This finally implies
\begin{align}
\label{inverL.Peqn6b}
V_n^{(1)}\;\;\xrightarrow{\;\;n\rightarrow \infty\;\;}\;\; 0 \quad \text{strongly in} \quad (\mathsf{L}^2(\mathbb{R}))^2
\end{align}
since $V_n^{(1)}$ is supported $I_K$.
\par
\underline{Derivation of the contradiction}: By \eqref{inverL.Peqn4} and \eqref{inverL.Peqn5} we have
\begin{align*}
||V_n||_2\leq ||\mathcal{B}_{\epsilon_n}^{-1}\mathcal{M}_{\epsilon_n}V_n^{(1)}||_2+
||\mathcal{B}_{\epsilon_n}^{-1}\mathcal{M}_{\epsilon_n}V_n^{(2)}||_2+
||\mathcal{B}_{\epsilon_n}^{-1}\mathcal{M}_{\epsilon_n}V_n^{(3)}||_2+
||\mathcal{B}_{\epsilon_n}^{-1}G_n||_2
\end{align*}
and passing to the limit $n\to \infty$ gives
 \begin{align*}
\limsup_{n\rightarrow \infty}||V_n||_2\leq \tfrac{1}{2}
\end{align*}
thanks to the Lemmas \ref{Lemma 6} and \ref{Lemma61},  and according to \eqref{inverL.Peqn7}, \eqref{inverL.Peqn6a}, and \eqref{inverL.Peqn6b}. This, however, contradicts the normalization condition in \eqref{inverL.Peqn8} and the antithesis must be false.\EMHC
\end{proof}
\subsection{Nonlinear fixed-point argument}
\label{sect:FP}
To conclude the existence proof for lattice waves it remains to verify  the conditions of the Banach fixed-point theorem.
\begin{Thm}[\BMHC existence and uniqueness of lattice waves\EMHC]
Under Assumptions \ref{Ass sym}, \ref{Ass2}, \ref{Ass inverse} and \ref{Ass1} there exist constants $D>0$ and $\epsilon_*>0$, such that for any $\epsilon<\epsilon_*$ the operator $\mathcal{F}_{\epsilon}$ has a unique fixed point in the ball \BC $B_D:=\{V\in (\mathsf{L}_{\rm{even}}^2(\mathbb{R}))^2: ||V||_2\leq D\}$.\EC
\end{Thm}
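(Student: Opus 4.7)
The plan is to verify the hypotheses of Banach's fixed-point theorem for $\mathcal{F}_\epsilon$ on the closed ball $B_D\subset(\mathsf{L}_{\rm{even}}^2(\mathbb{R}))^2$. Parity preservation is immediate: $W_0$ and $W_*$ are even, the operators $\mathcal{A}_{k_m\epsilon}$ respect parity by Lemma~\ref{Prop A}, the inverse $\mathcal{L}_\epsilon^{-1}$ leaves the even subspace invariant by the lemma preceding Theorem~\ref{inverL}, and $\mathcal{Q}_\epsilon$, $\mathcal{P}_\epsilon$, $\mathcal{N}_\epsilon$ are polynomial compositions of such operators, so $\mathcal{F}_\epsilon$ maps even inputs to even outputs. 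Moreover, Theorem~\ref{inverL} combined with Lemma~\ref{R+P} immediately provides a uniform bound $||\mathcal{L}_\epsilon^{-1}(R_\epsilon[W_0]+\mathcal{P}_\epsilon[W_0])||_2\leq C_0$ independently of $\epsilon$.

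The heart of the argument lies in estimating the nonlinear parts $\mathcal{Q}_\epsilon[V]$ and $\mathcal{N}_\epsilon[W_0;V]$ in $\mathsf{L}^2$. For the quadratic operator I would treat each building block $\mathcal{A}_{k_m\epsilon}\bigl((\mathcal{A}_{k_m\epsilon}V_i)(\mathcal{A}_{k_m\epsilon}V_j)\bigr)$ via H\"older together with the $\mathsf{L}^2\to\mathsf{L}^\infty$ smoothing $||\mathcal{A}_\eta W||_\infty\leq\eta^{-1/2}||W||_2$ from Lemma~\ref{Prop A}(1), which yields
\begin{align*}
||\mathcal{Q}_\epsilon[V]||_2&\leq C\epsilon^{-1/2}||V||_2^2,\\
||\mathcal{Q}_\epsilon[V]-\mathcal{Q}_\epsilon[\tilde V]||_2&\leq C\epsilon^{-1/2}(||V||_2+||\tilde V||_2)||V-\tilde V||_2.
\end{align*}
For $\mathcal{N}_\epsilon[W_0;V]=\epsilon^{-2}\bigl(\mathcal{P}_\epsilon[W_0+\epsilon^2V]-\mathcal{P}_\epsilon[W_0]\bigr)$ the cubic Lipschitz bound from Assumption~\ref{Ass1} is decisive: both arguments of each $\Psi_i^m$ are of size $\epsilon^2$ while their difference is of size $\epsilon^4$, so the Lipschitz estimate produces a factor $\epsilon^4\cdot\epsilon^4=\epsilon^8$ that balances the prefactors $\epsilon^{-6}$, $\epsilon^{-4}$ in \eqref{Eqn:DefPeps} together with the $\epsilon^{-2}$ in the definition of $\mathcal{N}_\epsilon$. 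Careful bookkeeping, in which any $\epsilon^{-1/2}$ paid for the $\mathsf{L}^\infty$-control of $\mathcal{A}_{k_m\epsilon}V$ is absorbed by a spare $\epsilon^2$ stemming from the cubic scaling, then yields
\begin{align*}
||\mathcal{N}_\epsilon[W_0;V]||_2&\leq C||V||_2+C\epsilon^3||V||_2^3,\\
||\mathcal{N}_\epsilon[W_0;V]-\mathcal{N}_\epsilon[W_0;\tilde V]||_2&\leq C\bigl(1+\epsilon^3(||V||_2^2+||\tilde V||_2^2)\bigr)||V-\tilde V||_2.
\end{align*}

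Combining these four estimates with Theorem~\ref{inverL} and the explicit factor $\epsilon^2$ in \eqref{endeq} gives, for all $V,\tilde V\in B_D$,
\begin{align*}
||\mathcal{F}_\epsilon[V]||_2&\leq C_0+C\epsilon^{3/2}D^2+C\epsilon^2 D,\\
||\mathcal{F}_\epsilon[V]-\mathcal{F}_\epsilon[\tilde V]||_2&\leq C\bigl(\epsilon^{3/2}D+\epsilon^2\bigr)||V-\tilde V||_2.
\end{align*}
Fixing $D:=2C_0$ and then choosing $\epsilon_*$ so small that $C\epsilon^{3/2}D^2+C\epsilon^2 D\leq C_0$ and $C(\epsilon^{3/2}D+\epsilon^2)\leq\tfrac12$ makes $\mathcal{F}_\epsilon$ a contractive self-mapping of $B_D$, and Banach's theorem supplies the unique fixed point, which then corresponds via \eqref{decompos} and \eqref{speed} to the KdV-like solitary wave announced in Theorem~\ref{maintm}.

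The main technical obstacle is the cubic bookkeeping underlying the $\mathcal{N}_\epsilon$ estimates: one must verify that every negative power of $\epsilon$ arising either from the prefactors in \eqref{Eqn:DefPeps} or from the $\mathsf{L}^\infty$-bound of Lemma~\ref{Prop A}(1) is genuinely compensated by the $\epsilon^2$-scaling inside the arguments of $\Psi_i^m$. Both the cubic Lipschitz structure imposed in Assumption~\ref{Ass1} and the $\epsilon$-independent finiteness of $||W_0||_\infty$ (which follows from the explicit formula \eqref{formelW}) are indispensable for this argument.
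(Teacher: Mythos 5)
Your proposal is correct and follows essentially the same route as the paper: a Banach fixed-point argument on $B_D$ with $D=2C_0$, the $\mathsf{L}^2\to\mathsf{L}^\infty$ smoothing bound $||\mathcal{A}_{k_m\epsilon}W||_\infty\leq(k_m\epsilon)^{-1/2}||W||_2$ producing the $\epsilon^{3/2}$ contraction factor for the quadratic terms, and the cubic Lipschitz bound of Assumption \ref{Ass1} absorbing the singular prefactors in $\mathcal{P}_\epsilon$ and $\mathcal{N}_\epsilon$. Your bookkeeping for $\mathcal{N}_\epsilon$ even yields a slightly sharper power of $\epsilon$ than the paper's (which estimates the quadratic prefactor via $||\cdot||_2$ rather than $||W_0||_\infty$), but both suffice for the contraction.
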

\begin{proof}
We \BMHC show \EMHC that the operator $\mathcal{F}_{\epsilon}$ is a contraction in a sufficiently large ball $B_D$ for any sufficiently small $\epsilon$, thus satisfying all the conditions of the Banach Fixed Point Theorem. \BMHC The value of $D$ will be chosen later.\EMHC
\par
\underline{Estimates for the quadratic terms:} For arbitrary $V_1,\ V_2 \in B_D$, Assumption \ref{Ass1} gives 
\begin{align*}
 ||\mathcal{A}_{k_m\epsilon}(\mathcal{A}_{k_m\epsilon}V_{1,i})^2-\mathcal{A}_{k_m\epsilon}(\mathcal{A}_{k_m\epsilon}V_{2,i})^2||_2&\leq ||(\mathcal{A}_{k_m\epsilon}V_{1,i})^2-(\mathcal{A}_{k_m\epsilon}V_{2,i})^2||_2\\
&\leq \big(||\mathcal{A}_{k_m\epsilon}V_{1,i}||_{\infty}+||\mathcal{A}_{k_m\epsilon}V_{2,i}||_{\infty}\big)||\mathcal{A}_{k_m\epsilon}(V_{1,i}-V_{2,i})||_2\\
&\leq k_m^{-1/2}\epsilon^{-1/2}\big(||V_{1,i}||_{2}+||V_{2,i}||_{2}\big)||V_{1,i}-V_{2,i}||_2\\
&\leq 2Dk_m^{-1/2}\epsilon^{-1/2}||V_{1}-V_{2}||_2
\end{align*}
\BMHC as well as \EMHC
\begin{align*}
 & ||\mathcal{A}_{k_m\epsilon}\big((\mathcal{A}_{k_m\epsilon}V_{1,1})(\mathcal{A}_{k_m\epsilon}V_{1,2})\big)-\mathcal{A}_{k_m\epsilon}\big((\mathcal{A}_{k_m\epsilon}V_{2,1})(\mathcal{A}_{k_m\epsilon}V_{2,2})\big)||_2
%\\&\leq %
%||(\mathcal{A}_{k_m\epsilon}V_{1,1})(\mathcal{A}_{k_m\epsilon}V_{1,2})-(\mathcal{A}_{k_m\epsilon}V_{1,1})(\mathcal{A}_{k_m\epsilon}V_{2,2})||_2
%\\&\qquad +||(\mathcal{A}_{k_m\epsilon}V_{1,1})(\mathcal{A}_{k_m\epsilon}V_{2,2})-(\mathcal{A}_{k_m\epsilon}V_{2,1})(\mathcal{A}_{k_m\epsilon}V_{2,2})||_2
\\&\leq ||\mathcal{A}_{k_m\epsilon}V_{1,1}||_{\infty}||\mathcal{A}_{k_m\epsilon}V_{1,2}-\mathcal{A}_{k_m\epsilon}V_{2,2}||_2+||\mathcal{A}_{k_m\epsilon}V_{2,2}||_{\infty}||\mathcal{A}_{k_m\epsilon}V_{1,1}-\mathcal{A}_{k_m\epsilon}V_{2,1}||_2\\
&\leq k_m^{-1/2}\epsilon^{-1/2}\Big(||V_{1,1}||_{2}||V_{1,2}-V_{2,2}||_2+||V_{2,2}||_{2}||V_{1,1}-V_{2,1}||_2\Big)\\
&\leq Dk_m^{-1/2}\epsilon^{-1/2}||V_{1}-V_{2}||_2.
\end{align*}
Applying this to $\mathcal{Q}_{\epsilon}$ yields
\begin{align*}
||\epsilon^2\mathcal{Q}_{\epsilon}[V_1]-\epsilon^2\mathcal{Q}_{\epsilon}[V_2]||_2\leq CD\epsilon^{3/2}||V_{1}-V_{2}||_2
\end{align*}
for $V_1, V_2\in B_D$, and \BMHC setting $V_1=V$ and $V_2=0$ we get\EMHC
\begin{align*}
||\epsilon^2\mathcal{Q}_{\epsilon}[V]||_2\leq CD\epsilon^{3/2}||V||_2\leq CD^2\epsilon^{3/2}
\end{align*}
for any $V\in B_D$. We emphasize that $C$ is a constant, which does not depend on $D$.
\par
\underline{Estimates for the higher order terms:} \BMHC Assumption \ref{Ass1} combined with Lemma \ref{Prop A} provides
\begin{align*}
&\ \ \ \ \Big|\Big|\Psi_i^m\big(\epsilon^2k_m\mathcal{A}_{k_m\epsilon}(W_*+\epsilon^2V_{1})\big)-
\Psi_i^m\big(\epsilon^2k_m\mathcal{A}_{k_m\epsilon}(W_*+\epsilon^2V_{1})\big)\Big|\Big|_2\\
&\leq \gamma_i^mk_m^{-1}\epsilon^7\Big(||W_*+\epsilon^2V_{1}||_{2}^2+||W_*+\epsilon^2V_{2}||_{2}^2\Big)||V_{1}-V_{2}||_2\leq C\gamma_i^m\epsilon^7(C+\epsilon^4D^2)||V_1-V_2||_2
\end{align*}
\EMHC for $V_1, V_2\in B_D$. This implies 
\begin{align*}
||\epsilon^2\mathcal{N}_{\epsilon}[W_{0};V_1]-\epsilon^2\mathcal{N}_{\epsilon}[W_{0};V_2]||_2&=||\mathcal{P}_{\epsilon}[W_{0}+V_1]-\mathcal{P}_{\epsilon}[W_{0}+V_2]||_2\leq \epsilon C(C+\epsilon^4D^2)||V_1-V_2||_2
\end{align*}
 for $V_1, V_2\in B_D$, \BMHC and also \EMHC
 \begin{align*}
||\epsilon^2\mathcal{N}_{\epsilon}[W_{0};V]||_2\leq \epsilon C(C+\epsilon^4D^2)||V||_2\leq \epsilon CD(C+\epsilon^4D^2)
\end{align*}
for all $V\in B_D$ thanks to \BMHC $\mathcal{N}_{\epsilon}[W_{0};0]=0$. \EMHC
\par
\underline{Concluding arguments:} Lemma \ref{R+P} finally gives 
\begin{align*}
||R_{\epsilon}[W_{0}]+\mathcal{P}_{\epsilon}[W_{0}]||_2\leq C,
\end{align*}
\BMHC and this implies \EMHC 
\begin{align*}
||\mathcal{F}_{\epsilon}[V]||_2\leq C+CD^2\epsilon^{3/2}+\epsilon CD(C+\epsilon^4D^2)
\end{align*}
for any $V\in B_D$. Moreover, the above estimates also ensure that \EMHC
\begin{align*}
||\mathcal{F}_{\epsilon}[V_1]-\mathcal{F}_{\epsilon}[V_2]||_2\leq \big(CD\epsilon^{3/2}+\epsilon C(C+\epsilon^4D^2)\big)||V_1-V_2||_2
\end{align*}
for all $V_1,V_2\in B_D$. Now we choose $D$ to be $2C$ \BMHC and find afterwards a value \EMHC $\epsilon_*>0$ such that 
\begin{align*}
C+CD^2\epsilon_*^{3/2}+\epsilon_* CD(C+\epsilon_*^4D^2)\leq D\qquad \text{and}\qquad
 \Big(CD\epsilon_*^{3/2}+\epsilon_* C(C+\epsilon_*^4D^2)\Big)< 1.
\end{align*}
\BMHC The claim is now granted by Banach's Contraction Mapping Principle.\EMHC
\end{proof}
\section{Applications to different lattices}
\label{sect:app}
\BMHC In this section we discuss the implication of our abstract existence result from \S\ref{sect:abstract} for the three lattice geometries sketched in Figure \ref{Fig:AllLattices}. \EMHC  The existence of a solitary solution as given by Theorem \ref{maintm} involves the verification of Assumptions \ref{Ass sym}, \ref{Ass2}, \ref{Ass inverse} and \ref{Ass1}. Notice that since all the relevant effective potentials are smooth at the point $(0,0)$, Assumptions \ref{Ass sym} and \ref{Ass1} are automatically satisfied. Thus only two assumptions remain to be verified, \BMHC and this \EMHC will be done numerically. For an intuitive picture we \BMHC also \EMHC give the behavior \BMHC of \EMHC $W_0$ and $\lambda$ with respect to \BMHC the propagation direction, i.e., the angle $\alpha$. All simulations are performed with $r_*=0.8047$ and $V_1(r)=V_2(r)=V(r)=\tfrac{1}{2}(r-1)^2$. \EMHC
\begin{figure}[ht!]
\begin{center}
   \includegraphics[scale=0.25]{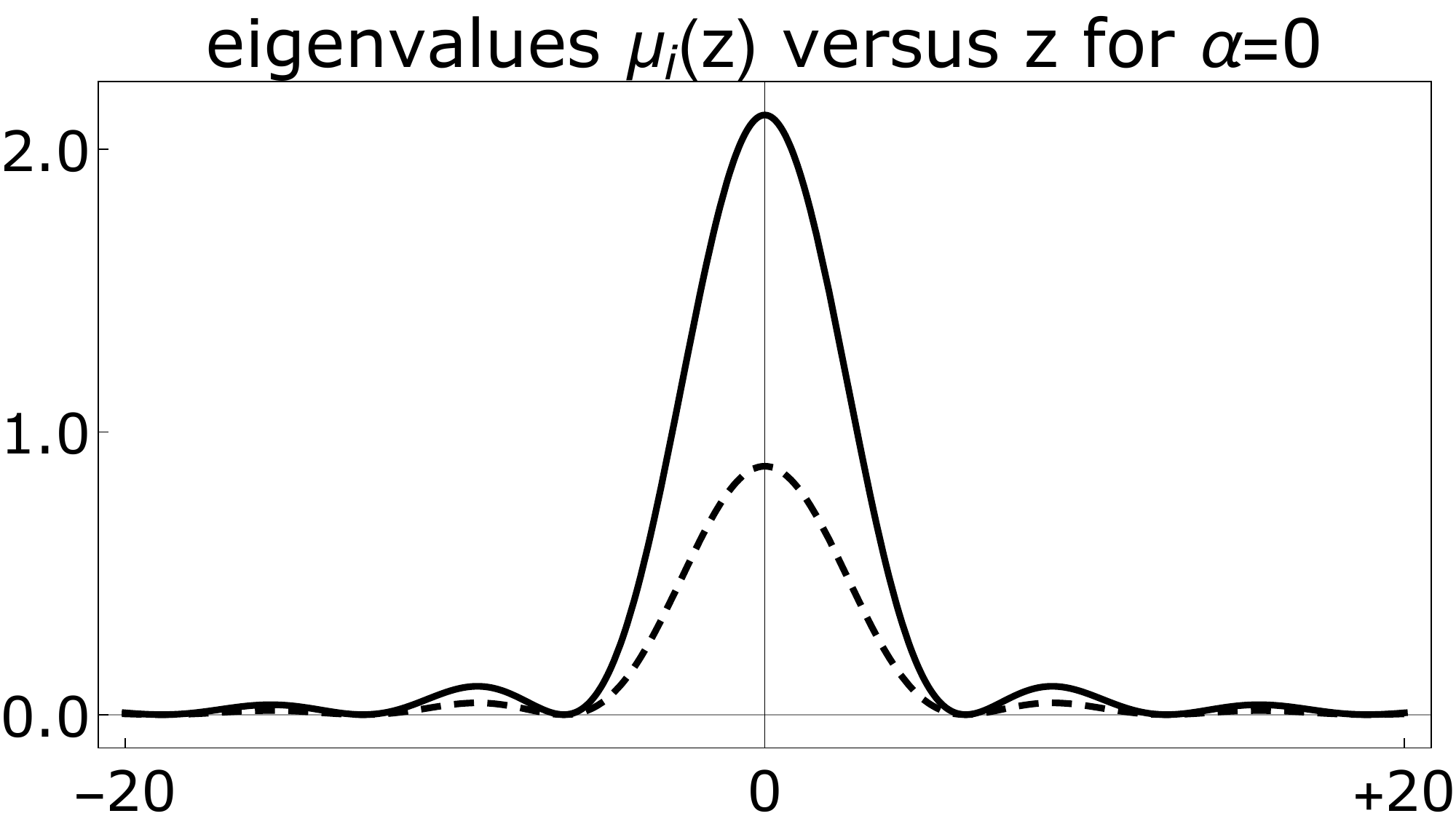}
\end{center}
\caption{\BMHC The auxiliary functions $\mu_1$ and $\mu_2$ from \eqref{Eqn:Mu} in solid and dashed lines, respectively, for the square lattice with $\alpha=0$. \EMHC}
\label{Fig:AuxFuntions}
\end{figure}
\par
We first remind that \eqref{eqsigma} gives two possible candidates for $\sigma_0$, but in our approach we always chose the larger solution, see \eqref{defsig}. The reason is that although the other solution branch might give rise to a well-defined KdV wave in the limit $\epsilon\to0$, we cannot expect the corresponding operator $\mathcal{B}_\epsilon$ to be invertible. To see this, we observe that $\mathcal{B}_\epsilon$ is invertible if and only if 
$\sigma_\epsilon$ does not belong to the spectrum of the operator 
$\mathcal{J}_{\epsilon}:=\sigma_{\epsilon}\id- \epsilon^2\mathcal{B}_{\epsilon}^{\text{can}}$, \BMHC see formulas \eqref{Bcan} and \eqref{Eqn:DefBEps}. Since the Fourier symbols of the components of $\mathcal{J}_{\epsilon}$ are given by \EMHC
\begin{align*}
\hat{\mathcal{J}}_{\epsilon,ij}(z)=\sum_{m=1}^Mk_m^2\alpha_{i,j}^m{\sinc}^2 (k_m\epsilon z/2)\,,
\end{align*}
we conclude that
\begin{align}
\label{Eqn:Mu}
\text{spec}\mathcal{J}_{\epsilon}=\bigcup_{i=1,2}\big\{\mu_i(\epsilon z): \epsilon z\in \mathbb{R}\big\}\,,
\end{align}
where the functions $\mu_1$, $\mu_2$ \BMHC can be computed pointwise in $\epsilon z$ \EMHC and are completely determined by the coefficients $k_m$ and $\alpha_{i,j}^m$, see Figure \ref{Fig:AuxFuntions} for an illustration.  Moreover, by \BMHC condition \eqref{eqsigma} \EMHC we have $\sigma_0\in\{\mu_1(0),\,\mu_2(0)\}$, \BMHC where our choice in 
\eqref{defsig} implies $\sigma_0=\max\{\mu_1(0),\,\mu_2(0)\}$ and \EMHC
Assumption \ref{Ass inverse} guarantees via
\begin{align*}
\max\limits_{i\in\{1,2\}\,,\; z\in\mathbb{R}} \mu_i(z)=
\sigma_0<\sigma_\epsilon
\end{align*} 
the desired invertibility of $\mathcal{B}_{\epsilon}$. For 
$\sigma_0=\min\{\mu_1(0),\,\mu_2(0)\}$, however, 
Assumption \ref{Ass inverse} can generically not be satisfied. In this case we expect that KdV-type waves still exist, but exhibit oscillatory tails due to resonances with the continuous spectrum.  A proof of this fact would involve sophisticated arguments lying beyond the scope of the present paper, but we refer to \cite{HMSZ13,VSWP16,HW17} for similar rigorous results on certain generalizations of one-dimensional FPU chains.
\subsection{Square lattice}
\BMHC We continue the discussion of the square lattice and refer to \S\ref{sect:Overview} for the expressions of the effective potentials and the parameters $k_m$.\EMHC
\begin{figure}[ht!]
\centering{%
\includegraphics[width=0.95\textwidth]{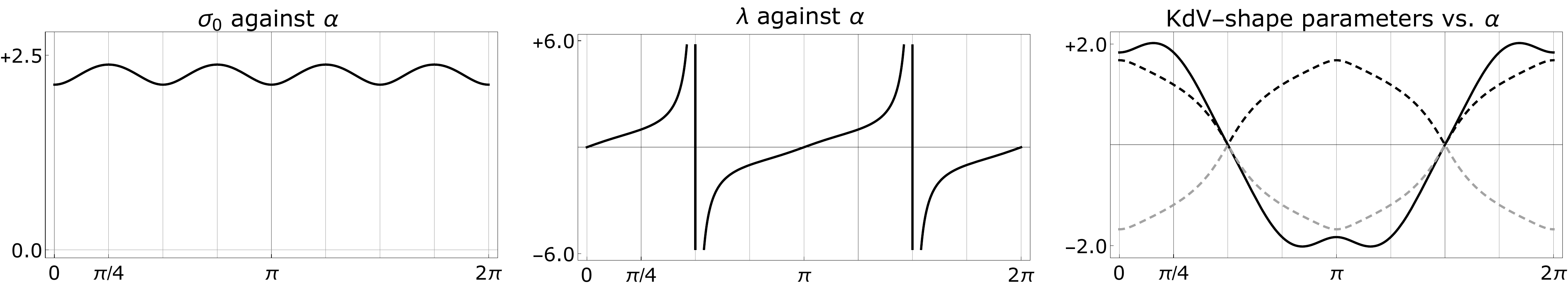}
}% 
\caption{\BMHC KdV-wave parameters for the square lattice. \EMHC \textit{Left and center}. The behavior of $\sigma_0$, $\lambda$ with respect to $\alpha$. \textit{Right}. \BMHC Shape parameters $p_1$ and $p_2$, where the positive and the \EMHC negative roots of $p_2^2$ are given in dashed and gray lines respectively. }
\label{figsqu}
\end{figure}
\begin{figure}[ht!]
\centering{ 
 \includegraphics[width=0.95\textwidth]{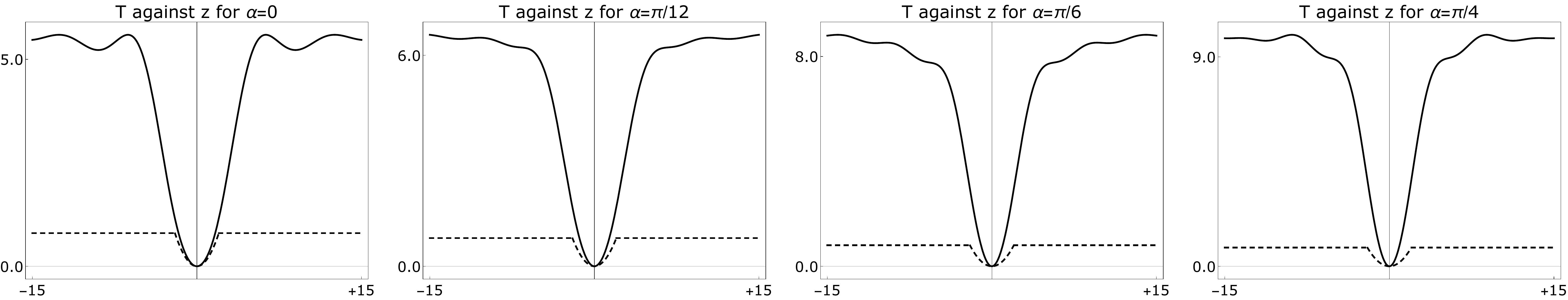}
}
\caption{\BMHC Parameter test for the square lattice. \EMHC $T(z)$ (solid) and $g(z)=0.3\cdot (\min\{z, 2\})^2$ (dashed)  for several values of $\alpha$. Assumption \ref{Ass inverse} requires $T(z)\geq g(z)$ for all $z\in \mathbb{R}$.}
\label{squT}
\end{figure}
\begin{figure}[ht!]
\centering{ 
  \includegraphics[width=0.95\textwidth]{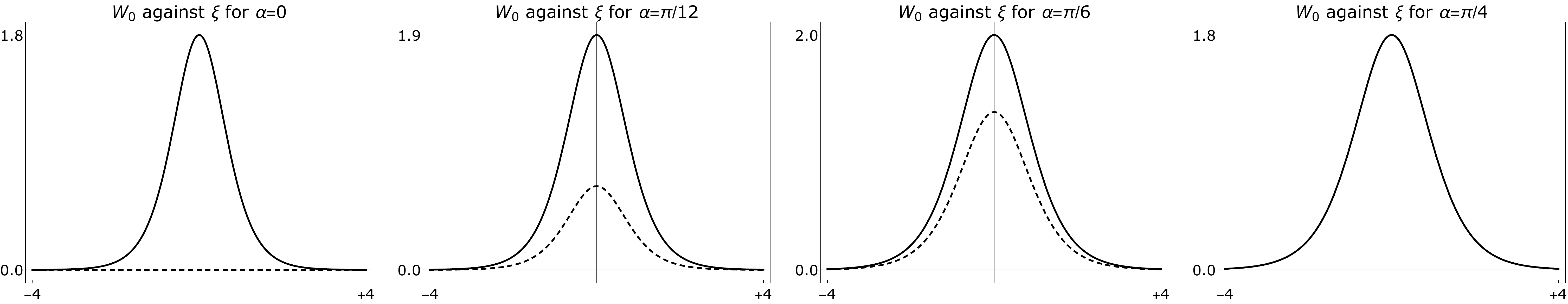}
}%
\caption{\BMHC KdV-limit profiles for selected values of $\alpha$ in the square lattice, where the first and the second component of $W_0$ are represented by the solid and the dashed lines, respectively. }
\label{squW}
\end{figure}
\par
\underline{Test of Assumption \ref{Ass2}:} 
The purpose of Assumption \ref{Ass2} is to ensure the existence of KdV traveling waves in the formal limit $\epsilon\rightarrow 0$. This is actually a minimal condition for the existence of a solitary wave for $\epsilon>0$. 
In Figure \ref{figsqu} we see that $\sigma_0$ is always positive \BMHC but non-constant and that
both $\lambda$ and the shape parameters $p_i$ from \eqref{Eqn:ShapePrms} are well-defined and non-vanishing for all values of $\alpha$ except for $\alpha=\pm\pi/2$, \BC and this implies that the parameters $d_i$ from \eqref{defd} satisfy the second part of
Assumption \ref{Ass2}. The singularities at $\alpha=\pm\pi/2$, \EC however,
have no geometric meaning but reflect that the corresponding wave profiles extend via $W_1=0\neq W_2$ only in the vertical direction, see \eqref{Ws1}. 
\par
\underline{Test of Assumption \ref{Ass inverse}:} We list in Figure \ref{squT} the numerically computed function $T$ for \BMHC $\alpha\in\{0,\,\tfrac{\pi}{12},\, \tfrac{\pi}{6},\tfrac{\pi}{4}\}$,
 and recall that \BMHC this function \EMHC is quadratic near $z=0$, see the remark to Assumption \ref{Ass inverse}. \EMHC These graphs \BMHC indicate that the required condition is satisfied for all values of $\alpha$.\EMHC
\par
\underline{\BMHC Velocity \EMHC profiles:} \BMHC To illustrate the direction dependence of the lattice waves, we display \EMHC in Figure \ref{squW} the two components of the \BMHC limiting velocity profiles $W_0=\lim_{\epsilon\to0} W_\epsilon$ \EMHC for the same set of values of $\alpha$ as in Figure \ref{squT}. \BMHC For \EMHC $\alpha=\frac{\pi}{4}$ we observe the coincidence of the two components of $W_0$, which is consistent with the axial symmetry of the lattice with respect to the diagonal direction. \BMHC We also recall that the case $\alpha=0$ has been studied in \cite{FM03}.\EMHC
\subsection{Diamond lattice}
\BMHC The diamond lattice from the right panel in Figure \ref{Fig:AllLattices} \EMHC arises by taking out the springs on one diagonal of the square lattice and then rotating by $45$ degrees. For each particle three pairs of forces come into question, \BMHC and thus we have $M=3$.
Moreover, similarly to the discussion in \S\ref{sect:Overview}, \EMHC the values for $k_m$ are obtained by \BMHC projecting the propagation vector \EMHC to the axes of symmetry. These are $(0,1)$, $(\frac{1}{\sqrt{2}}, \frac{1}{\sqrt{2}})$ and  $(\frac{1}{\sqrt{2}}, -\frac{1}{\sqrt{2}})$ and \BMHC provide \EMHC
\begin{align*}
k_1=\sin(\alpha),\quad k_2=\frac{1}{\sqrt{2}}\cos(\alpha)+\frac{1}{\sqrt{2}}\sin(\alpha),\quad k_3=\frac{1}{\sqrt{2}}\cos(\alpha)-\frac{1}{\sqrt{2}}\sin(\alpha)
\end{align*}
\BMHC as well as the effective potentials
\begin{align*}
\phi_1(x_1,x_2)& =V(\sqrt{x_1^2+(x_2+r_*)^2}), \quad \phi_2(x_1,x_2)=V(\sqrt{(x_1+\frac{1}{\sqrt{2}}r_*)^2+(x_2+\frac{1}{\sqrt{2}}r_*)^2}), \\
\phi_3(x_1,x_2)& =V(\sqrt{(x_1+\frac{1}{\sqrt{2}} r_*)^2+(x_2-\frac{1}{\sqrt{2}} r_*)^2}),
\end{align*}
where we supposed for simplicity that all springs can be described by the same potential $V$.
Finally, notice that \EMHC the number of symmetries is considerably reduced as compared with the square lattice: we still have the two reflection symmetries and but no more invariance under rotations by 45 degrees.
\par
\begin{figure}[!ht]
\centering{%
\includegraphics[width=0.95\textwidth]{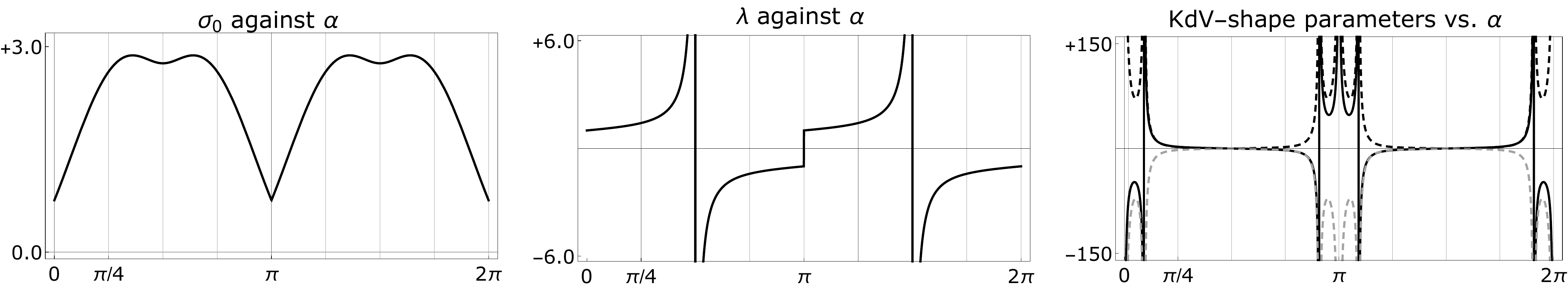}
}% 
\caption{The plots from Figure \ref{figsqu} for the diamond lattice. \BMHC In \EMHC the graph of $\lambda$ we find jumps at multiples of $\pi$, which is consistent with the fact that the lattice is symmetric with respect to the horizontal direction. For $\alpha=0$ no KdV wave exists due to \BMHC this \EMHC singularity. }
\label{figdia}
\end{figure}
\begin{figure}[ht!]
\centering{ 
 \includegraphics[width=0.95\textwidth]{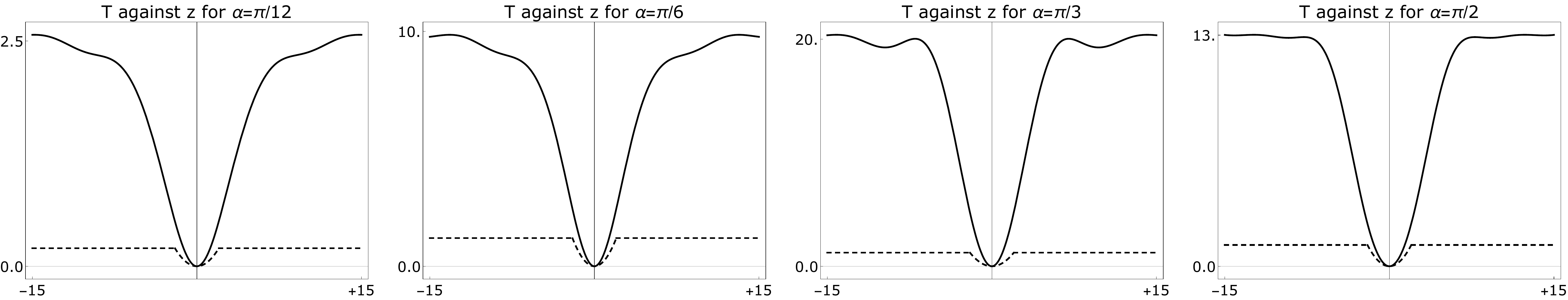}
}
\caption{The plots from Figure \ref{squT} for the diamond lattice. }
\label{diaT}%
\end{figure}
\begin{figure}[ht!]
\centering{ 
   \includegraphics[width=0.95\textwidth]{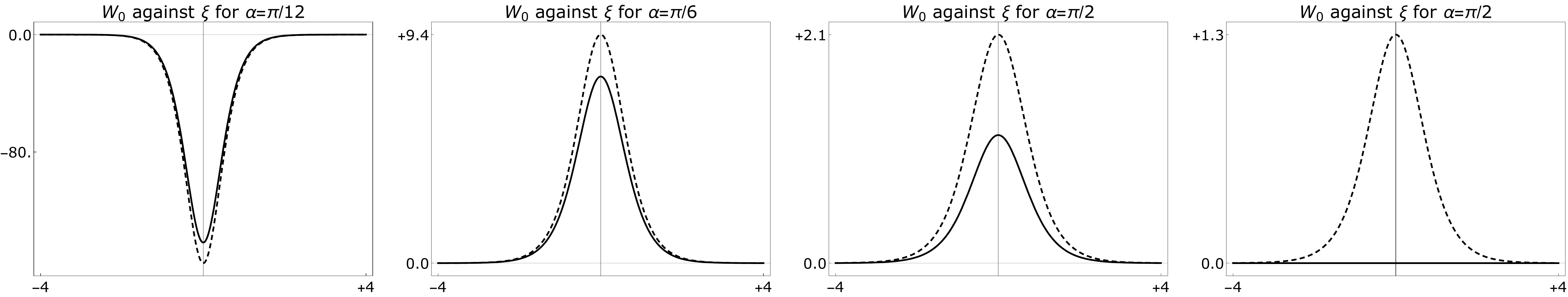}
   }
\caption{The plots from Figure \ref{squW} for the diamond lattice. }
\label{diaW} 
\end{figure}
\underline{Test of Assumption \ref{Ass2}:}  In Figure \ref{figdia} we see that \BMHC the graph of $\sigma_0$ is always positive but exhibits considerable changes  instead of the smooth and oscillatory behavior in the square lattice. \EMHC Due to the axial symmetry of the lattice
it suffices to consider $\alpha\in [0,\frac{\pi}{2}]$, and in the graph of KdV-shape parameters it is easily seen that $p_1\neq 0$  and \BC $p_2\neq0$ \EC can be defined for all $\alpha\in (0,\frac{\pi}{4}]$ except for \BMHC  isolated critical values.\EMHC 
\par
\underline{Test of Assumption \ref{Ass inverse}:} \BMHC Figure \ref{diaT} shows the numerical graphs of $T$ \EMHC for $\alpha\in\{\frac{\pi}{12},\ \frac{\pi}{6},\ \frac{\pi}{3},\ \frac{\pi}{2}\}$, \BMHC where we omitted the plot for \EMHC $\alpha=0$ since the KdV-shape parameters explode for this value. Similarly as before, Assumption \ref{Ass2} is fulfilled locally around $z=0$ and Figure \ref{diaT} gives numerical evidence for its global validity. 
\par
\underline{\BMHC Velocity \EMHC profiles:}  We observe in Figure \ref{diaW} a clear flip of the velocity profiles, when $\alpha$ turns from $\frac{\pi}{12}$ to $\frac{\pi}{6}$. This means that the wave is compressive for small values of $\alpha$ and above a certain value it becomes expansive. The threshold for this change is exactly the singularity between $0$ and $\frac{\pi}{2}$ as seen in the $(p_1,\ p_2)$--$\alpha$ graph of Figure \ref{figdia}. \BMHC For \EMHC $\alpha=\frac{\pi}{2}$ we observe the vanishing of the first component of $W_0$, which hints at a similar result as proven in \cite{FM03}, namely the existence of a unidirectional and longitudinal KdV-like solitary wave, whose first component completely vanishes. 
\subsection{Triangle lattice}
\BMHC In this example we have $M=3$ with
\begin{align*}
k_1=\cos(\alpha),\quad k_2=\frac{1}{2}\cos(\alpha)+\frac{\sqrt{3}}{2}\sin(\alpha),\quad k_3=\frac{1}{2}\cos(\alpha)-\frac{\sqrt{3}}{2}\sin(\alpha),
\end{align*}
so the effective potentials are given by
\begin{align*}
\phi_1(x_1,x_2)& =V(\sqrt{(x_1+r_*)^2+x_2^2}), \quad \phi_2(x_1,x_2) =V(\sqrt{(x_1+\frac{1}{2}r_*)^2+(x_2+\frac{\sqrt{3}}{2}r_*)^2}), \\
\phi_3(x_1,x_2)& =V(\sqrt{(x_1+\frac{1}{2}r_*)^2+(x_2-\frac{\sqrt{3}}{2}r_*)^2}).
\end{align*}
\EMHC
\begin{figure}[ht!]
\centering{%
\includegraphics[width=0.95\textwidth]{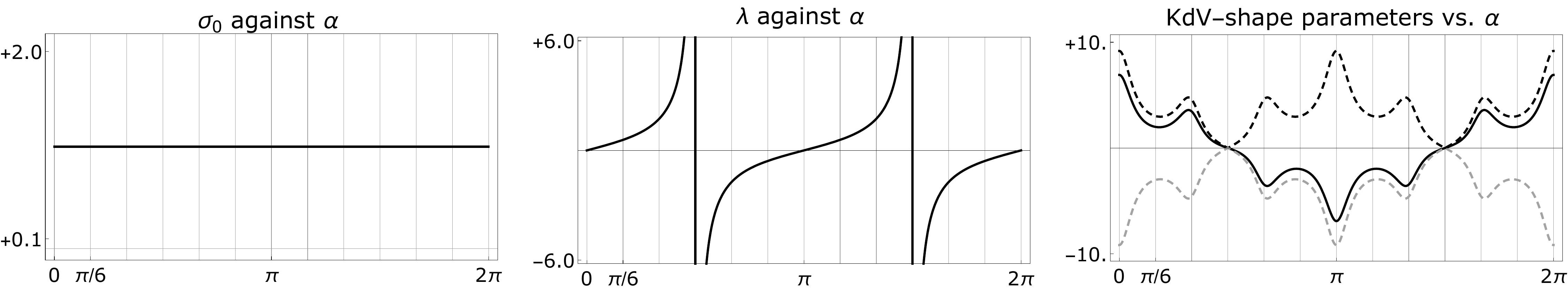}
}% 
\caption{The plots from Figure \ref{figsqu} for the triangle lattice. }
\label{figtri}
\end{figure}
\begin{figure}[ht!]
\centering{ 
 \includegraphics[width=0.95\textwidth]{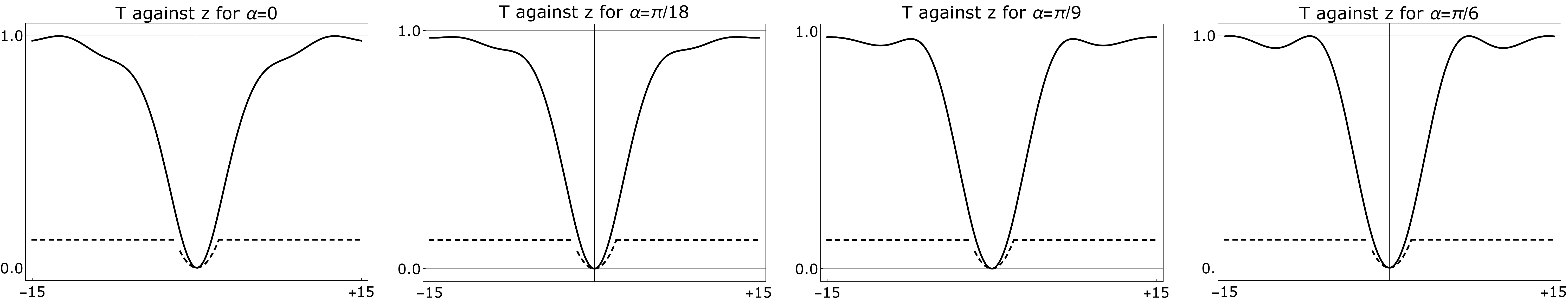}
}
\caption{The plots from Figure \ref{squT} for the triangle lattice. }
\label{triT}%
\end{figure}
\begin{figure}[ht!]
\centering{ 
   \includegraphics[width=0.95\textwidth]{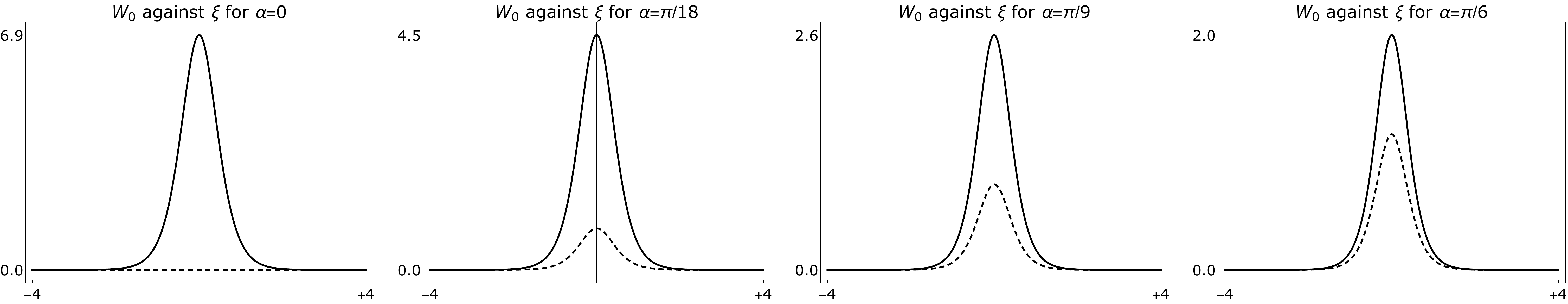}
   }
\caption{The plots from Figure \ref{squW} for the triangle lattice. }
\label{triW} 
\end{figure}
\underline{Test of Assumption \ref{Ass2} and \ref{Ass inverse}:}  As shown in Figure \ref{figtri}, the sound speed $\sqrt\sigma_0$ is constant for all $\alpha$. By comparing Figures \ref{figsqu}, \ref{figdia} \BMHC we might guess that the $\alpha$-dependence of $\sigma_0$  is more dramatic in lattices with few symmetries, but we have no rigorous explanation for this observation. Moreover, the plots in Figure \ref{triT} indicate that
Assumption \ref{Ass inverse} is satisfied for all values of $\alpha$.\EMHC
\par
\underline{\BMHC Velocity \EMHC profiles:} \BMHC Figure \ref{triW} illustrates the existence of
expansion waves in all directions and for $\alpha\in2\pi\{0,1/3,2/3\}$ we find again
unidirectional waves which propagate longitudinal with respect to one of the principal axes in the lattice.\EMHC
\section*{Acknowledgments}
 Both authors authors are grateful for the financial support by the \emph{Deutsche Forschungsgemeinschaft} (DFG individual grant HE 6853/2-1). They also sincerely thank the referee for careful reading and for the substantial list of very helpful comments concerning the exposition of the material and the presentation of the technical details.

%
% -----------------------------------------------------------------------------
% - Bibliography
% -----------------------------------------------------------------------------
%

%
%
%

\begin{thebibliography}{10}

\bibitem{Che13}
F.~Chen.
\newblock Wandernde {W}ellen in {FPU}-{G}ittern.
\newblock Master Thesis, Institute for Mathematics, Saarland University,
  Germany, 2013.

\bibitem{Che17}
F.~Chen.
\newblock Traveling waves in two-dimensional {FPU} lattices.
\newblock PhD Thesis, Institute for Applied Mathematics, University of
  M\"unster, Germany, 2017.

\bibitem{FPU55}
E.~Fermi, J.~Pasta, and S.~Ulam.
\newblock Studis on nonlinear \mbox{problems}.
\newblock {\em Los Alamos Scientific Laboraty Report}, LA--1940, 1955.

\bibitem{FV99}
A.-M. Filip and S.~Venakides.
\newblock Existence and modulation of traveling waves in particle chains.
\newblock {\em Comm. Pure Appl. Math.}, 51(6):693--735, 1999.

\bibitem{FM03}
G.~Friesecke and K.~Matthies.
\newblock Geometric solitary waves in a 2d mass spring lattice.
\newblock {\em Discrete Contin. Dyn. Syst. Ser. B}, 3:105--114, 2003.

\bibitem{FM15}
G.~Friesecke and A.~Mikikits-Leitner.
\newblock Cnoidal waves on {F}ermi-{P}asta-{U}lam lattices.
\newblock {\em J. Dynam. Differential Equations}, 27(3-4):627--652, 2015.

\bibitem{FP99}
G.~Friesecke and R.~L. Pego.
\newblock Solitary waves on {FPU} lattices. {I}. {Q}ualitative properties,
  renormalization and continuum limit.
\newblock {\em Nonlinearity}, 12(6):1601--1627, 1999.

\bibitem{FP02}
G.~Friesecke and R.~L. Pego.
\newblock Solitary waves on {FPU} lattices. {II}. {L}inear implies nonlinear
  stability.
\newblock {\em Nonlinearity}, 15(4):1343--1359, 2002.

\bibitem{FP04a}
G.~Friesecke and R.~L. Pego.
\newblock Solitary waves on {F}ermi-{P}asta-{U}lam lattices. {III}.
  {H}owland-type {F}loquet theory.
\newblock {\em Nonlinearity}, 17(1):207--227, 2004.

\bibitem{FP04b}
G.~Friesecke and R.~L. Pego.
\newblock Solitary waves on {F}ermi-{P}asta-{U}lam lattices. {IV}. {P}roof of
  stability at low energy.
\newblock {\em Nonlinearity}, 17(1):229--251, 2004.

\bibitem{FW94}
G.~Friesecke and J.~A.~D. Wattis.
\newblock Existence theorem for solitary waves on lattices.
\newblock {\em Comm. Math. Phys.}, 161(2):391--418, 1994.

\bibitem{GMWZ14}
J.~Gaison, S.~Moskow, J.~D. Wright, and Q.~Zhang.
\newblock Approximation of polyatomic {FPU} lattices by {K}d{V} equations.
\newblock {\em Multiscale Model. Simul.}, 12(3):953--995, 2014.

\bibitem{Her10}
M.~Herrmann.
\newblock Unimodal wavetrains and solitons in convex {F}ermi-{P}asta-{U}lam
  chains.
\newblock {\em Proc. Roy. Soc. Edinburgh Sect. A}, 140(4):753--785, 2010.

\bibitem{HMSZ13}
M.~Herrmann, K.~Matthies, H.~Schwetlick, and J.~Zimmer.
\newblock Subsonic phase transition waves in bistable lattice models with small
  spinodal region.
\newblock {\em SIAM J. Math. Anal.}, 45(5):2625--2645, 2013.

\bibitem{HML15}
M.~Herrmann and A.~Mikikits-Leitner.
\newblock Kdv waves in atomic chains with nonlocal interactions.
\newblock arXiv:1503.02307, 2015.

\bibitem{HR10}
M.~Herrmann and J.~D.~M. Rademacher.
\newblock Heteroclinic travelling waves in convex {FPU}-type chains.
\newblock {\em SIAM J. Math. Anal.}, 42(4):1483--1504, 2010.

\bibitem{HW08}
A.~Hoffman and C.~E. Wayne.
\newblock Counter-propagating two-soliton solutions in the
  {F}ermi-{P}asta-{U}lam lattice.
\newblock {\em Nonlinearity}, 21(12):2911--2947, 2008.

\bibitem{HW09}
A.~Hoffman and C.~E. Wayne.
\newblock Asymptotic two-soliton solutions in the {F}ermi-{P}asta-{U}lam model.
\newblock {\em J. Dynam. Differential Equations}, 21(2):343--351, 2009.

\bibitem{HW13}
A.~Hoffman and C.~E. Wayne.
\newblock A simple proof of the stability of solitary waves in the
  {F}ermi-{P}asta-{U}lam model near the {K}d{V} limit.
\newblock In {\em Infinite dimensional dynamical systems}, volume~64 of {\em
  Fields Inst. Commun.}, pages 185--192. Springer, New York, 2013.

\bibitem{HW17}
A.~Hoffman and J.~D. Wright.
\newblock Nanopteron solutions of diatomic {F}ermi-{P}asta-{U}lam-{T}singou
  lattices with small mass-ratio.
\newblock arXiv:1703.00026, 2017.

\bibitem{Ioo00}
G.~Iooss.
\newblock Travelling waves in the {Fermi-Pasta-Ulam} lattice.
\newblock {\em \mbox{Nonlinearity}}, 13:849--866, 2000.

\bibitem{Pan05}
A.~Pankov.
\newblock {\em Traveling {W}aves and {P}eriodic {O}scillations in
  {F}ermi-{P}asta-{U}lam {L}attices}.
\newblock Imperial College Press, London, 2005.

\bibitem{SW00}
G.~Schneider and C.~E. Wayne.
\newblock {Counter-propagating \mbox{waves} on fluid surfaces and the continuum
  limit for the {Fermi}-{Pasta}-{Ulam} model}.
\newblock In {\em International {Conference} on {Differential
  \mbox{Equations}}}, volume~1, pages 390--404. {World Scientific}, 2000.

\bibitem{SZ12}
H.~Schwetlick and J.~Zimmer.
\newblock Kinetic relations for a lattice model of phase transitions.
\newblock {\em Arch. Rational Mech. Anal.}, 206:707--724, 2012.

\bibitem{SW97}
D.~Smets and M.~Willem.
\newblock Solitary waves with prescribed speed on infinite lattices.
\newblock {\em J. Funct. Anal.}, 149:266--275, 1997.

\bibitem{TV05}
L.~Truskinovsky and A.~Vainchtein.
\newblock Kinetics of martensitic phase transitions: lattice model.
\newblock {\em SIAM J. Appl. Math.}, 66:533--553, 2005.

\bibitem{VSWP16}
A.~Vainchtein, Y.~Starosvetsky, J.~Wright, and R.~Perline.
\newblock Solitary waves in diatomic chains.
\newblock {\em Phys. Rev. E}, 93(number):042210:1--12, 2016.

\bibitem{ZK65}
N.~J. Zabusky and M.~D. Kruskal.
\newblock Interaction of `solitons' in a collisionless plasma and the
  recurrence of initial states.
\newblock {\em Phys. Rev. Lett.}, 15:240--243, 1965.

\end{thebibliography}
\end{document}